\DeclareMathOperator{\VAR}{var}
\DeclareMathOperator{\Proj}{proj}
\DeclareMathOperator{\TR}{Tr}
\DeclareMathOperator{\DIAG}{diag}
\DeclareMathOperator{\ROW}{row}
\DeclareMathOperator{\VEC}{vec}
\newcommand{\Norm}[2]{ \| #1 \|_{#2} }
\newcommand{\Exp}[1]{\mathbb{E}[ #1]}
\newcommand{\Zero}{ \mathbf 0 }   
\newcommand{\ID}[1]{ \mathbb{1} (#1 ) }
\newtheorem{Assumption}{Assumption}
\newtheorem{Theorem}{Theorem}
\newtheorem{Definition}{Definition}
\newtheorem{Remark}{Remark}
\newtheorem{Lemma}{Lemma}
\newtheorem{Corollary}{Corollary}
\newtheorem{Proposition}{Proposition}
\newtheorem{Problem}{Problem}
\newcommand{\Compress}{\medmuskip=0mu
\thinmuskip=0mu
\thickmuskip=0mu}
\tikzset{%
  every neuron/.style={
    circle,
    draw,
    minimum size=.7cm
  },
  neuron missing/.style={
    draw=none, 
    scale=3,
    text height=0.333cm,
    execute at begin node=\color{black}$\vdots$
  },
}
\definecolor{red}{RGB}{187,0,0}
\definecolor{blue}{RGB}{0, 0,180}
\definecolor{pink}{RGB}{203, 76, 178}
\begin{document}

%
%\title{Risk-Sensitive Linear Quadratic Control in Deep Structured Teams:  A Gauge Transformation}

%
%\title{Linear Quadratic Risk-Sensitive  in Deep Control Systems: A Gauge Transformation}

\title{\huge Deep Structured Teams with Linear Quadratic
Model: Partial Equivariance and Gauge
Transformation}
%\title{Deep Linear Quadratic Control in Risk-Sensitive }

%\title{Risk-Sensitive Linear Quadratic Control for Partially Exchangeable Agents}

\author{Jalal Arabneydi and Amir G. Aghdam% <-this % stops a space
\thanks{This work has been supported in part by the Natural Sciences and Engineering Research Council of Canada (NSERC) under Grant RGPIN-262127-17, and in part by Concordia University under Horizon Postdoctoral Fellowship.}  
\thanks{Jalal Arabneydi and Amir G. Aghdam are with the  Department of Electrical and Computer Engineering, 
        Concordia University, 1455 de Maisonneuve Blvd. West, Montreal, QC, Canada, Postal Code: H3G 1M8.
        {\tt\small Email:jalal.arabneydi@mail.mcgill.ca} and
       {\tt\small Email:aghdam@ece.concordia.ca}}%       
}

\maketitle

\begin{abstract}
Motivated by the recent developments in artificial intelligence, we introduce  linear quadratic  deep structured teams in this paper.  Two notions of equivariant and partially equivariant systems  are defined, and it is shown that such systems can be   partitioned into  a few sub-populations of decision makers, where every decision maker in each sub-population is coupled in both dynamics and cost function through a set of  linear  regressions of  the states and actions of all decision makers.  Two non-classical information structures are considered: deep-state sharing and partial deep-state sharing, where deep state  refers to the linear regression of the states  of the decision makers in each sub-population.  For a  risk-sensitive cost function with deep-state sharing structure,  a  closed-form  low-complexity representation  of the  globally optimal strategy is obtained,  whose computational complexity   is independent of the number of decision makers in each sub-population. In addition,  it is shown that the risk-sensitive solution converges to the risk-neutral one as the number of decision makers increases to infinity.  Moreover,  two sub-optimal  sequential strategies under partial  deep-state sharing  information structure are proposed  by introducing  two  Kalman-like filters,  one based on the finite-population model and the other one based on the infinite-population model.  It is  proved that the  prices  of  information  associated with  the above  sub-optimal solutions   converge to zero as the number of decision makers goes to infinity. Furthermore,  a class of feed-forward deep neural  networks   with multiple  layers of weighted sums and products is  introduced   wherein  the  optimal weights and biases  are explicitly obtained.  A supply-chain management example is  presented to  demonstrate the efficacy of  the  obtained results. 
\end{abstract}

\begin{IEEEkeywords}
Networked systems,  large-scale systems,  stochastic control, risk-sensitive cost function,  control applications.
\end{IEEEkeywords}

\section{Introduction}
Recently, there has been a surge of interest  in complex  networked systems such as Internet of Things, social networks, and smart grids.   Such systems often consist of  a large number of interconnected decision makers wherein  a single decision maker (microscopic entity)  has the potential to alter  the  behaviour of the entire (macroscopic) system; a phenomenon known as butterfly effect.  To avoid chaotic situations in such applications,  a social welfare cost function  is often defined as a common cost  for all the decision makers in order to  enforce  the  desired behavior of the  system.   In real-world applications,  it is  also important  to take into account  practical  constraints such as  the privacy of  decision makers,  limited  computational power of processors, and  limited capacity of communications. Under these constraints, the minimization of the above-mentioned cost function becomes   a huge non-convex non-Bayesian optimization problem, whose exploration space grows exponentially  with the control  horizon and the number of  decision makers (which is intractable). As a result,  it is of vital  importance to devise  a systematic approach  to modify  the classical results  in such a way that  they can be efficiently applied to modern control systems consisting of  many interconnected decision makers.

 Inspired by  new architectural developments in  deep neural networks that  have shown a remarkable performance in analyzing big data~\cite{lecun2015deep,schmidhuber2015deep},   we strive to provide an analogous framework in optimal control theory  in order to solve problems with a large number of decision makers.  Recently, the authors have introduced \emph{deep structured teams}  in~\cite{Jalal2019MFT} to study a class of large  Markov decision processes  with discrete  state and action spaces. In this paper, we  propose a  deep structured team  with continuous state and action spaces.  We call such systems deep structured because  the decision makers interact  with each other through  a number of linear regressions (weighted averages) of the states and actions, which resembles  the interactions between the neurons of a deep feed-forward neural network.

  In particular, we consider a class of decentralized control systems that consist of  multiple sub-populations of decision makers, where  each decision maker  is coupled to other  decision makers  in dynamics and cost function  through a few  linear regressions of the states and actions of all decision makers.  Such systems often  arise in practical applications wherein  the  interaction  between the decision makers is  modelled by a set of linear regressions, serving as an approximation for  some highly   complex couplings.  Similar to~\cite{Jalal2019MFT}, two non-classical information structures are considered: deep-state sharing and partial deep-state sharing, where deep state refers to the linear regression of states. In the former information structure, every decision maker observes its local state and  the  joint deep state while in the latter structure, the deep states of  a subset (possibly all) of sub-populations are not observed.  By using  a gauge transformation and   proposing a carefully constructed ansatz, we  find a low-dimensional solution for the  Hamilton-Jacobi-Bellman (HJB) equation in terms of a   Riccati equation (that consists of several local  Riccati equations and one global Riccati equation).  The salient feature of the  Riccati equation is that  its dimension is independent of the number of decision makers in each sub-population,  and hence, the optimal solution is scalable. It is to be noted that the solution itself  depends on  the number of decision makers. In addition, we propose two Kalman-like filters to compute two sub-optimal solutions, one based on the finite-population model and one based on the infinite-population one.  Furthermore,   we show that the prices of   robustness and information converge to zero, as the number of decision makers tends to infinity.  The main results  of this work are also  extended to the  infinite-horizon case.
  
%  This paper is  a generalized version of the risk-neutral mean-field teams, which was  initially introduced in~\cite{arabneydi2016new} and showcased in~\cite{JalalCCECE2018,JalalCDC2018,JalalCDC2015,Jalal2017linear}.
% In contrast to the risk-neutral model wherein  the  certainty equivalence principle  simplifies the analysis,  in the risk-sensitive case  the  principle does not hold. To this end,   this paper provides a more general  approach  to  take  the uncertainty  into account.

The remainder of the paper is organized as follows. In Section~\ref{sec:invariance}, two notions of equivariant and partial equivariant systems are defined, and  in Section~\ref{sec:problem},   the problem of linear quadratic deep structured  teams  is formulated.   In Section~\ref{sec:main},   the optimal solution   under deep-state sharing information structure   is  obtained, and an explicit connection with a deep  feed-forward  neural network is presented. In Section~\ref{sec:main_pdss},   two sub-optimal solutions under partial deep-state sharing  structure  are  proposed.  In Section~\ref{sec:generalizations}, the main results  are  generalized to  the  infinite-horizon cost function. A supply-chain management  example is provided   in Section~\ref{sec:numerical}, and   some concluding remarks are given   in Section~\ref{sec:conclusion}.

\subsection*{Notation}
In this paper,  the sets of real  and natural numbers are, respectively, denoted by $\mathbb{R}$  and $\mathbb{N}$.  For any vectors $x$, $y$ and $z$,   $\VEC(x, y, z)$  denotes the vector $[x^\intercal, y^\intercal, z^\intercal]^\intercal$, and for any matrices $A,B$ and $C$ with the same number of columns, $\ROW(A,B,C)$ denotes the matrix $[A^\intercal,B^\intercal,C^\intercal ]^\intercal$.  For  any $n \in \mathbb{N}$, $\mathbb{N}_n$ is  the set $\{1,\ldots,n\}$.  Given any square matrices $\Lambda_1$, $\Lambda_2$ and $\Lambda_3$, $\DIAG(\Lambda_1, \Lambda_2, \Lambda_3)$  denotes the block diagonal  matrix with matrices $\Lambda_1$, $\Lambda_2$ and $\Lambda_3$. Moreover, $\DIAG(A)_n$ is a block diagonal matrix with $n \in \mathbb{N}$ copies of the square matrix $A$ on its main diagonal.  In addition, $\TR(\boldsymbol \cdot)$ is the trace of a matrix, $\Exp{\boldsymbol \cdot}$  is the expectation of an event,  $\VAR(\boldsymbol \cdot)$ is the variance of a random variable, and $\ID{\boldsymbol \cdot}$ is the indicator function of a set.  Furthermore,  identity matrix is denoted by $\mathbf{I}$, and  a matrix with all  zero (one) entries is represented by the notation $\Zero$ ($\mathbf 1$), respectively.  For  vectors $x$ and $y$, $\Proj(x,y)= \frac{\langle x| y\rangle}{\langle y| y\rangle} y$ is the projection of  $x$ into  $y$, where $\langle \boldsymbol \cdot | \boldsymbol \cdot \rangle$ is the standard inner product in the complex space. In addition, for  vector $x$ and  Hermitian positive semi-definite matrix $Q$, $\Norm{x}{Q}=\langle x| Q x\rangle$. The notation $\otimes$  denotes the Kronecker product.
%For convenience of  display,  a row operator similar to  the Hadamard product is defined  such that  for any real-valued   vector $H_{h \times 1}$ and  any block matrix $A$,  $H \Diam A$ is a matrix of size $A$ wherein  every block matrix $A^{k \bullet}$   at the $k$-th row of $A$ is multiplied by  the $k$-th element of  $H$, $k \in \mathbb{N}_h$.
%As an  example,  for vector  $H=[
%1 \quad 0 \quad 2]^\intercal$ and block matrix $A=[
%A_1^\intercal  \hspace{.1cm}  A_2^\intercal  \hspace{.1cm}  A_3^\intercal]^\intercal$ with possibly different row dimensions,  $H \Diam A=[
%A_1^\intercal  \hspace{.1cm}  \Zero  \hspace{.1cm}  2A_3^\intercal]^\intercal$. When $A$ has the same dimension as $H$, the above  row operator  is equivalent to the Hadamard product.

\section{Equivariant and partially equivariant systems}\label{sec:invariance}

Many natural systems possess some form of  invariant features that  remain unchanged  under  certain types of transformations.  For example, the outcome of an election does not depend on the order in which  votes are registered;  Newton's laws are invariant  to translations and rotations; the  spectrum of an adjacency matrix of a graph is invariant to particular drawings or labels of vertices, and  the presence of an object in  an image  is invariant to spatial transformation.  Motivated by  invariant theory in mathematics~\cite{dieudonne1970invariant}  and  invariance mechanics in  physics~\cite{noether1971invariant}, which play a key role in describing  natural processes, we study an equivariant linear quadratic system, where features  have linear dynamics and quadratic cost functions.

Consider a simple linear quadratic system consisting of $n$ agents (decision makers). Let $d_x  \in \mathbb{N}$ and $d_u \in \mathbb{N}$ denote the dimension of the local state and local action  of each agent, respectively.  The  dynamics of  the  joint state  at time $t \in [0, \infty)$ is described by:
%\begin{equation}
$\dot  x_t=A_t  x_t+B_t  u_t$,
%\end{equation}
where $ x_t=\VEC(x^1_t,\ldots,x^n_t) \in \mathbb{R}^{nd_x}$   and $ u_t = \VEC(u^1_t,\ldots,u^n_t) \in  \mathbb{R}^{nd_u}$. The cost function is given by: $ \Norm{x_T}{Q_T} +\int_{0}^T (\Norm{x_t}{Q_t}+ \Norm{u_t}{R_t}) dt$, where $T \in [0, \infty)$. 

Denote by $F=F(1) \circ F(2)\ldots \circ F(Y) \in \mathbb{R}^{n\times n}$ the   composition (concatenation) of  $Y$ different  transformations (filters) $F(y)$, $y \in \mathbb{N}_Y$.   Let   $F_x:=F \otimes \mathbf I_{d_x \times d_x}$ and $F_u:=F \otimes \mathbf I_{d_u \times d_u}$.   Define  $\mathbf c_t(F):=\VEC(c_t^1(F),\ldots,c_t^n(F))$ as a vector of cost functions,  where  $c_t^i(F):=\Norm{F_x x_t}{Q_t}  + \Norm{F_u u_t}{R_t}$, $ i \in \mathbb{N}_n$.
%\footnote{Note that this is a team problem, where  players have the same cost function, i.e.,  $c_t^i(P)=c_t^j(P)$.}

 \begin{Definition}[Equivariant linear quadratic system]\label{Def:Invariant}
A linear quadratic system is  said to be equivariant to transformation~$F$ if the following conditions hold at any time $t \in [0, \infty) $:
\begin{itemize}
\item Equivariant dynamics: $F_x \dot  x_t=A_t  F_x x_t+ B_t  F_u u_t$.
\item Equivariant cost function:  $F^\intercal F \mathbf c_t (\mathbf I_{n\times n})=\mathbf c_t(F)$. Alternatively, 
  $\TR( F_x^\intercal F_x Q_t x_t x_t^\intercal)+ \TR( F_u^\intercal F_u R_t u_t u_t^\intercal)=\Norm{F_xx_t}{Q_t}+\Norm{F_u u_t}{R_t}$.
\end{itemize}
\end{Definition}
\begin{Remark}
\emph{
The notion of equivariance is more general than that of invariance.   In particular, a function $g$ is invariant under the transformation group $\mathcal{F}$ with domain $\mathcal{X}$ if  $\forall x \in \mathcal{X}$ and $\forall f \in \mathcal{F} $,  $g(x)=g(f(x))$. On the other hand,  a function $g$ is equivariant  if  $f(g(x))=g(f(x))$. Note that the output also changes,  which is in contrast to the invariant case. 
}
\end{Remark}

\begin{Definition}[Polynomial family]
A linear quadratic system is said to be polynomial  in $F$ if  its corresponding matrices are polynomial functions of matrix  $F$. In particular, at any time  $t \in [0, \infty)$,  $A_t=\sum_{h_a=0}^{H_a} a_t(h) F_x^{h_a} $,  $B_t=\sum_{h_b=0}^{H_b} b_t(h)F_u^{h_b}$, $Q_t=\sum_{h_q=0}^{H_q} q_t(h)  F_x^{h_q}$ and $R_t=\sum_{h_r=0}^{H_r} r_t(h) F_u^{h_r}$,  where $a_t(h),b_t(h),q_t(h),r_t(h) \in \mathbb{R}$ and $H_a, H_b, H_q, H_r \in \mathbb{N} \cup \{0\}$.\footnote{If $F$ is invertible, the polynomial functions can be extended to Laurent polynomial functions including both $F$ and $F^{-1}$.}  
\end{Definition}
\begin{Proposition}\label{prop:polynomial}
Any   linear quadratic system that is polynomial in $F$ is equivariant to  $F$.
\end{Proposition}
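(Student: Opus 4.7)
The plan is to reduce both equivariance conditions to the single elementary fact that any polynomial in a matrix $M$ commutes with $M$. I would first set up the commutation identities, then verify the dynamics and cost conditions in turn.

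First, since $A_t$ and $Q_t$ are polynomials in $F_x$, multiplying them termwise by $F_x$ on either side produces the same sum $\sum (\cdot)\, F_x^{h+1}$, so
\begin{equation}
A_t F_x = F_x A_t, \qquad Q_t F_x = F_x Q_t,
\end{equation}
and analogously $B_t F_u = F_u B_t$ and $R_t F_u = F_u R_t$. In addition, because $F_x = F \otimes \mathbf{I}_{d_x \times d_x}$ and $F_u = F \otimes \mathbf{I}_{d_u \times d_u}$ share the same left Kronecker factor $F$, reading $B_t = \sum_{h_b} b_t(h)\, F^{h_b} \otimes B_0$ (with a fixed block $B_0$ of the appropriate shape) gives the mixed identity $F_x B_t = B_t F_u$ through the mixed-product property of the Kronecker product.

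Second, I would verify the equivariant dynamics. Left-multiplying $\dot x_t = A_t x_t + B_t u_t$ by $F_x$ and applying the commutation identities just established,
\begin{equation}
F_x \dot x_t = (F_x A_t) x_t + (F_x B_t) u_t = A_t (F_x x_t) + B_t (F_u u_t),
\end{equation}
which is the first equivariance condition. Third, for the cost I would use the cyclic property of the trace together with $Q_t F_x = F_x Q_t$ to write
\begin{equation}
\Norm{F_x x_t}{Q_t} = \TR\bigl(F_x^\intercal Q_t F_x\, x_t x_t^\intercal\bigr) = \TR\bigl(F_x^\intercal F_x Q_t\, x_t x_t^\intercal\bigr),
\end{equation}
and the identical manipulation on the action term yields $\Norm{F_u u_t}{R_t} = \TR(F_u^\intercal F_u R_t\, u_t u_t^\intercal)$. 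Summing these two identities reproduces exactly the second bullet of Definition~\ref{Def:Invariant}, equivalently $F^\intercal F \mathbf c_t(\mathbf I_{n \times n}) = \mathbf c_t(F)$.

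The main obstacle is not either of the two ``diagonal'' commutations (which are immediate from the polynomial form) but the cross identity $F_x B_t = B_t F_u$ when $d_x \neq d_u$, since then $F_x$ and $F_u$ live in different spaces and the expression ``polynomial in $F_u$'' for $B_t$ cannot be read literally if $B_t$ is to be an $nd_x \times nd_u$ matrix. The resolution is structural: the polynomial expansion should be interpreted as acting on the agent index via the common scalar factor $F$, with the intrinsic input shape carried by a separate block; once this reading is fixed, the mixed-product property of Kronecker products delivers the required commutation, and the rest of the proof collapses to the trace calculation above.
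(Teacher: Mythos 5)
Your proof is correct and follows essentially the same route as the paper's: commutation of matrix polynomials with $F$ gives the equivariant dynamics, and cyclicity of the trace together with $Q_t F_x = F_x Q_t$ gives the equivariant cost. Your extra care with the mixed identity $F_x B_t = B_t F_u$ via the Kronecker structure is a welcome refinement, since the paper's own proof only records $F_u B_t = B_t F_u$ and leaves the dimensional issue for $d_x \neq d_u$ implicit.
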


\begin{proof}
The proof is provided in Appendix~\ref{sec:proof_prop:polynomial}.
\end{proof}

\begin{Definition}[Partially equivariant linear quadratic system]\label{Def:partial}
A linear quadratic system is said to be partially equivariant if it can be partitioned into $S \in \mathbb{N}$ distinct sub-populations, where  the dynamics and cost of agents in each sub-population  are equivariant to some transformations. 
%For the special case of $S=1$,   partially equivariant system reduces to  the equivariant one.
\end{Definition}

In what follows, we present  some  equivariant linear quadratic systems, where agents are coupled through a set of linear regressions of the states and actions of agents. 
\subsection{Some structural results}
 To simplify the analysis, we  consider scalar variables in this subsection, where   $d_x=d_u=1$.  Let  $\lambda_j$ and $v_j$, $j \in \mathbb{N}_n$, denote the $j$-th  eigenvalue and eigenvector  of $F$, respectively.
\begin{Proposition}[Normal transformation]\label{thm:normal}
Suppose $F$ is a real-valued  normal matrix,  where $F^\intercal F=F F^\intercal$. The  instantaneous cost of the transformed  system is proportional to that of the original  system along each eigenvector of matrix~$F$ such that
%\begin{multline}
$\Norm{F x_t}{Q_t}+\Norm{F u_t}{R_t}=\sum_{j=1}^n \langle \lambda_j| \lambda_j \rangle (\Norm{\Proj(x_t,v_j )}{Q_t}
+\Norm{\Proj(u_t,v_j)}{R_t})$.
%\end{multline} 
\end{Proposition}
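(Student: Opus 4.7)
The plan is to invoke the spectral theorem for normal matrices, diagonalize $F$ in an orthonormal eigenbasis, and then expand both sides of the claimed identity in that basis so that orthogonality collapses every cross term.

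First, since $F$ is real and normal, there exists an orthonormal set of (in general complex) eigenvectors $\{v_j\}_{j=1}^{n}$ with $Fv_j=\lambda_j v_j$ and $\langle v_j|v_k\rangle=\delta_{jk}$. Because the present subsection restricts to the polynomial family of the preceding definition, both $Q_t$ and $R_t$ are polynomials in $F$ and are therefore diagonalized by the same basis, so that $Q_tv_j=q_j(t)v_j$ and $R_tv_j=r_j(t)v_j$ for some scalars $q_j(t),r_j(t)$.

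Next, I would expand $x_t=\sum_j \alpha_j v_j$ with $\alpha_j=\langle v_j|x_t\rangle$, so that $Fx_t=\sum_j\lambda_j\alpha_j v_j$ and $\Proj(x_t,v_j)$ is a scalar multiple of $v_j$ of squared magnitude $|\alpha_j|^2$. Substituting into $\Norm{Fx_t}{Q_t}=\langle Fx_t|Q_tFx_t\rangle$ and using orthonormality together with $Q_tv_k=q_k(t)v_k$ eliminates every off-diagonal contribution, leaving $\sum_j\langle\lambda_j|\lambda_j\rangle |\alpha_j|^2 q_j(t)$, which coincides with $\sum_j\langle\lambda_j|\lambda_j\rangle\Norm{\Proj(x_t,v_j)}{Q_t}$. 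Repeating the same calculation with $u_t$ in place of $x_t$ and $R_t$ in place of $Q_t$, and then adding the two identities, delivers the proposition.

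The main obstacle, and really the only nontrivial point, is justifying that the eigenbasis of $F$ simultaneously diagonalizes $Q_t$ and $R_t$; without this, the off-diagonal terms $\langle v_j|Q_tv_k\rangle$ for $j\neq k$ would survive and the stated equality would fail. I would handle this by explicitly invoking the polynomial-family hypothesis of this subsection, since any polynomial in a normal matrix inherits its eigenvectors and commutes with it. A secondary bookkeeping issue is that the $v_j$ may be complex while $x_t$ and $u_t$ are real; this causes no trouble because conjugate-paired eigenvectors contribute in conjugate pairs and the identity is phrased through $\langle\lambda_j|\lambda_j\rangle=|\lambda_j|^2$, which is real and nonnegative.
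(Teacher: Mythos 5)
Your proof is correct and follows essentially the same route as the paper's: spectral decomposition of the normal matrix $F$ into an orthonormal eigenbasis, expansion of $x_t$ and $u_t$ (equivalently, insertion of $F=\sum_j \lambda_j v_j v_j^\ast$) and collapse of the sum to diagonal terms. The one place you go beyond the paper is worth keeping: the paper's derivation silently discards the cross terms $\langle v_{j'} | Q_t v_j\rangle$ for $j\neq j'$, which is only legitimate when $Q_t$ and $R_t$ are simultaneously diagonalized by the eigenvectors of $F$ (e.g., because they are polynomials in $F$, as in the surrounding subsection), and you correctly identify this as the crux and make the hypothesis explicit.
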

\begin{proof}
The proof is presented in Appendix~\ref{sec:proof_thm:normal}. 
\end{proof}
For  real-valued matrices,    symmetric,  skew-symmetric and orthogonal matrices are normal.
%For  real-valued matrices,    symmetric ($F=F^\intercal$),  skew-symmetric ($F=-F^\intercal$) and orthogonal ($F^\intercal F=\mathbf I_{n \times n}$) matrices are normal.
\begin{Proposition}[Symmetric transformation]\label{proposition:symmetric}   Given any real-valued symmetric transformation $F$,  let 
%\begin{equation}
$\bar x^j_t:=\frac{1}{n}\sum_{i=1}^n \alpha^{i,j} x^i_t$ and  $\bar u^j_t:=\frac{1}{n}\sum_{i=1}^n \alpha^{i,j} u^i_t$, 
%\end{equation}
where  $\alpha^{i,j} \in \mathbb{R}$  is the $i$-th element of  the vector $\sqrt n v_j$, $i,j \in \mathbb{N}_n$. For  every agent $i \in \mathbb{N}_n$ at any time $t \in [0,T]$, the following linear quadratic system: 
\begin{equation}
\begin{cases}
\dot x^i_t= a_t x^i_t+ b_t u^i_t+ \sum_{j=1}^n \alpha^{i,j} ( \bar a^j_t \bar x^j_t +   \bar a^j_t \bar u^j_t),\\
c^i_t=\sum_{j=1}^n ( \int_{0}^T (\Norm{\bar x^j_t}{\bar q^j_t} + \Norm{\bar u_t}{\bar r^j_t})dt )+\sum_{i=1}^n (   \int_{0}^T (\Norm{x^i_t}{q_t} + \Norm{u^i_t}{r_t})dt), 
\end{cases}
\end{equation}
is equivariant to   transformation $F$, where $a_t:= a_t(0)$, $\bar a^j_t:= \sum_{h=1}^{H_a} \lambda_j^h a_t(h) $,
$b_t:= b_t(0)$,  $\bar b^j_t:= \sum_{h=1}^{H_b} \lambda_j^h b_t(h)$, $
q_t:= q_t(0) $,  $\bar q^j_t:= \sum_{h=1}^{H_q} \lambda_j^h q_t(h) $, $
r_t:= r_t(0) $, and  $\bar r^j_t:= \sum_{h=1}^{H_r} \lambda_j^h r_t(h)$.
%\begin{align}
%A_t= a_t(0) \otimes \mathbf I_{d_x \times d_x}, \quad \bar A^j_t= \sum_{h=1}^{H_a} \lambda_j^h a_t(h)\otimes \mathbf I_{d_x \times d_x},\\
%B_t= b_t(0) \otimes \mathbf I_{d_u \times d_u},\quad \bar B^j_t= \sum_{h=1}^{H_b} \lambda_j^h b_t(h)\otimes \mathbf I_{d_u \times d_u},\\
%Q_t= q_t(0) \otimes \mathbf I_{d_x \times d_x}, \quad \bar Q^j_t= \sum_{h=1}^{H_q} \lambda_j^h q_t(h)\otimes \mathbf I_{d_x \times d_x},\\
%R_t= r_t(0) \otimes \mathbf I_{d_u \times d_u},\quad \bar R^j_t= \sum_{h=1}^{H_r} \lambda_j^h r_t(h)\otimes \mathbf I_{d_u \times d_u}.
%\end{align}
\end{Proposition}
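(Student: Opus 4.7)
The plan is to reduce the claim to Proposition~\ref{prop:polynomial}, which already shows that every linear quadratic system that is polynomial in $F$ is equivariant to $F$. So it suffices to verify that the component-wise dynamics and cost written in the statement coincide, in aggregate form, with $\dot x_t = A_t x_t + B_t u_t$ and the quadratic forms $\Norm{x_t}{Q_t}+\Norm{u_t}{R_t}$ where $A_t, B_t, Q_t, R_t$ are the polynomials in $F$ listed in the definition of the polynomial family.

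First I would invoke the spectral theorem for real symmetric matrices to write $F = V\Lambda V^\intercal = \sum_{j=1}^n \lambda_j v_j v_j^\intercal$, with the eigenvectors $v_j$ forming an orthonormal basis and all $\lambda_j \in \mathbb{R}$. Consequently $F^h = \sum_j \lambda_j^h v_j v_j^\intercal$ for every integer $h \geq 0$. Substituting the parametrization $v_j = \tfrac{1}{\sqrt n}(\alpha^{1,j},\ldots,\alpha^{n,j})^\intercal$ yields the two key identities $v_j^\intercal x_t = \sqrt n\,\bar x^j_t$ and $(F^h x_t)^i = \sum_{j=1}^n \lambda_j^h \alpha^{i,j}\,\bar x^j_t$, with entirely analogous identities for $u_t$.

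For the dynamics, I would expand $(A_t x_t)^i = \sum_h a_t(h)(F^h x_t)^i$, split off the $h=0$ contribution (which collapses to $a_t(0)\,x^i_t = a_t x^i_t$ since $F^0 = \mathbf I$), and regroup the remaining terms as $\sum_{j=1}^n \alpha^{i,j}\,\bar a^j_t\,\bar x^j_t$ using the definition $\bar a^j_t = \sum_{h \geq 1} \lambda_j^h a_t(h)$. The identical manipulation applied to $(B_t u_t)^i$ reproduces the coupling term $\sum_j \alpha^{i,j}\,\bar b^j_t\,\bar u^j_t$. For the cost, I would apply Proposition~\ref{thm:normal} (valid since a symmetric matrix is normal) to diagonalize the quadratic forms, or equivalently expand $\Norm{x_t}{Q_t} = \sum_h q_t(h)\,x_t^\intercal F^h x_t = \sum_h q_t(h) \sum_j \lambda_j^h (v_j^\intercal x_t)^2$. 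Separating $h = 0$ recovers the purely local quadratic $\sum_{i=1}^n q_t (x^i_t)^2$, while the terms $h \geq 1$ assemble into the deep-state penalty $\sum_{j=1}^n \bar q^j_t \Norm{\bar x^j_t}{}$ after accounting for the factor arising from $\|v_j\|^2 = 1$ versus the $1/n$ normalization of $\bar x^j_t$. The same decomposition applied to $\Norm{u_t}{R_t}$ yields the terminal and running action cost in the stated form.

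Once the given component-wise system has been identified as the expansion of a polynomial-in-$F$ system, equivariance to $F$ follows immediately from Proposition~\ref{prop:polynomial}. Conceptually everything is driven by the spectral decomposition; the only delicate step I anticipate is the bookkeeping of the scalar $n$ that propagates from $\|v_j\|^2 = 1$ through the definition of $\bar x^j_t$, and this is a routine calculation rather than a genuine obstacle.
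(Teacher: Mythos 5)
Your proposal is correct and follows essentially the same route as the paper's proof in Appendix~\ref{sec:proof_proposition:symmetric}: spectral decomposition of the symmetric $F$, identification of the stated component-wise system with the polynomial family via the identities $v_j^\intercal x_t = \sqrt{n}\,\bar x^j_t$ and $v_j^\intercal u_t = \sqrt{n}\,\bar u^j_t$, and then an appeal to Proposition~\ref{prop:polynomial}. Your write-up is merely more explicit about the $h=0$ versus $h\geq 1$ split and the factor-of-$n$ bookkeeping, which the paper leaves implicit.
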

\begin{proof}
The proof is presented in Appendix~\ref{sec:proof_proposition:symmetric}.
\end{proof}
According to Proposition~\ref{proposition:symmetric}, each eigenvector represents a specific  feature of the transformation $F$, where for example, $\bar x_t^j$  denotes the aggregate state  of agents associated with the feature identified by the eigenvector $v_j$, $j \in \mathbb{N}_n$. In real-world applications,  it is  often useful  to  restrict attention to only a small subset of dominant features  corresponding to the largest  eigenvalues. For the case of  an arbitrary permutation matrix~$F$ (which is generally not a symmetric matrix, and  hence may admit complex eigenvalues), we show that all features become equally important,   resulting in an aggregate feature represented  by the empirical (unweighted) average.

\begin{Proposition}[Arbitrary permutation]\label{proposition:permutation}
  Let   $\bar x_t:=\frac{1}{n}\sum_{i=1}^n x^i_t$ and    $\bar u_t:=\frac{1}{n}\sum_{i=1}^n u^i_t$, $i \in \mathbb{N}_n$. For  every agent $i \in \mathbb{N}_n$ at any time $t \in [0,T]$, the  following linear quadratic system:
\begin{equation}
\begin{cases}
\dot x^i_t= a_t x^i_t+ b_t u^i_t+ \bar a_t \bar x_t +  \bar b_t \bar u_t,\\
 c^i_t=  \int_{0}^T (\Norm{\bar x_t}{\bar q_t} + \Norm{ \bar u_t}{\bar r_t})dt)
+\sum_{i=1}^n (   \int_{0}^T (\Norm{x^i_t}{q_t} + \Norm{u^i_t}{r_t})dt),
\end{cases}
\end{equation}
 is equivariant to every  permutation matrix~$F$, where $A_t=:a_t \mathbf{I}_{n \times n} + \bar a_t \mathbf{1}_{n \times n}$, $B_t=:b_t \mathbf{I}_{n \times n} + \bar b_t \mathbf{1}_{n \times n}$, $Q_t=:q_t \mathbf{I}_{n \times n} + \bar q_t \mathbf{1}_{n \times n}$, and  $R_t=:r_t \mathbf{I}_{n \times n} + \bar r_t \mathbf{1}_{n \times n}$.
\end{Proposition}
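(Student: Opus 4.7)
The plan is to verify the two equivariance conditions of Definition~\ref{Def:Invariant} directly for an arbitrary permutation matrix $F$. Proposition~\ref{prop:polynomial} cannot be invoked uniformly here, because while $a_t\mathbf{I}$ is trivially a power of $F$, the all-ones matrix $\mathbf{1}_{n\times n}$ is not a polynomial in every permutation matrix; for instance, the powers of the transposition that swaps indices $1$ and $2$ among $n \geq 3$ agents only populate the diagonal and the two off-diagonal positions $(1,2)$ and $(2,1)$, so the remaining ones of $\mathbf{1}_{n\times n}$ can never be produced. This forces the proof to exploit the intrinsic row- and column-sum property of permutation matrices rather than their polynomial algebra.

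First I would verify equivariant dynamics. Multiplying the original dynamics $\dot x_t = A_t x_t + B_t u_t$ on the left by $F = F_x = F_u$ (recall $d_x = d_u = 1$), the desired identity $F\dot x_t = A_t F x_t + B_t F u_t$ reduces to the commutation relations $FA_t = A_tF$ and $FB_t = B_tF$. Given the decompositions $A_t = a_t\mathbf{I} + \bar a_t\mathbf{1}_{n\times n}$ and $B_t = b_t\mathbf{I} + \bar b_t\mathbf{1}_{n\times n}$, the only nontrivial identity to check is $F\mathbf{1}_{n\times n} = \mathbf{1}_{n\times n}F$, and this holds because every row and every column of a permutation matrix sums to one, making both sides equal to $\mathbf{1}_{n\times n}$.

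Next, for the cost equivariance, the orthogonality $F^\intercal F = \mathbf I$ collapses the condition $F^\intercal F\,\mathbf c_t(\mathbf I) = \mathbf c_t(F)$ into $\mathbf c_t(\mathbf I) = \mathbf c_t(F)$, which componentwise demands $F^\intercal Q_t F = Q_t$ and $F^\intercal R_t F = R_t$. The same row/column-sum identity yields $F^\intercal \mathbf{1}_{n\times n} F = \mathbf{1}_{n\times n}$, and combined with the analogous decompositions of $Q_t$ and $R_t$ this immediately gives both equalities. Finally, to recover the scalar representation of the dynamics and cost stated in the proposition, I would expand the $i$-th component of $A_t x_t$ as $a_t x_t^i + \bar a_t \sum_j x_t^j$ and identify the aggregate sum with (a scalar multiple of) the empirical mean $\bar x_t$; the same expansion applied to $B_t u_t$, $\Norm{x_t}{Q_t}$, and $\Norm{u_t}{R_t}$ yields the per-agent local quadratic terms and the aggregate quadratic penalties on $\bar x_t, \bar u_t$.

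The main conceptual obstacle is recognizing that Proposition~\ref{prop:polynomial} does \emph{not} cover this case and that equivariance to every permutation must instead be established from the commutation $[F, \mathbf{1}_{n\times n}] = 0$ and the invariance $F^\intercal \mathbf{1}_{n\times n} F = \mathbf{1}_{n\times n}$, both of which are manifestations of the row/column stochasticity of $F$. Once this observation is in place, all remaining steps are elementary linear-algebra manipulations and require no further clever input.
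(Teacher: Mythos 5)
Your proof is correct, and it reaches the same algebraic core as the paper's argument: equivariance of the dynamics reduces to $FA_t=A_tF$ and $FB_t=B_tF$, and equivariance of the cost, after using $F^\intercal F=\mathbf I$, reduces to $F^\intercal Q_tF=Q_t$ and $F^\intercal R_tF=R_t$. The difference is in how these identities are discharged. The paper appeals to the exchangeability results of an external reference and to the (unproved there) characterization of the commutant of the permutation representation, essentially arguing the \emph{necessity} direction --- that any matrix commuting with all permutations must have the form $c_t\mathbf I+\bar c_t\mathbf 1$ --- and leaving the sufficiency verification implicit. You instead prove sufficiency directly from the row- and column-sum property of permutation matrices, i.e.\ $F\mathbf 1_{n\times n}=\mathbf 1_{n\times n}F=\mathbf 1_{n\times n}$ and hence $F^\intercal\mathbf 1_{n\times n}F=\mathbf 1_{n\times n}$, which is exactly what the proposition as stated requires and makes the proof self-contained. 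Your preliminary observation that Proposition~\ref{prop:polynomial} cannot be invoked --- since $\mathbf 1_{n\times n}$ is not a polynomial in an arbitrary permutation matrix (e.g.\ a single transposition with $n\geq 3$) --- is a genuine and worthwhile clarification that the paper's wording about ``polynomial families equivariant to any permutation matrix'' glosses over. One minor point to keep explicit: the coefficients $\bar a_t,\bar b_t,\bar q_t,\bar r_t$ appearing in the matrix decompositions and those in the scalar per-agent dynamics and cost differ by factors of $n$ (or $n^2$), which the paper's remark about $n$-dependence of the matrices is meant to absorb; your phrase ``a scalar multiple of the empirical mean'' acknowledges this, but it is worth stating the rescaling precisely when you expand the $i$-th component.
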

\begin{proof}
The proof is presented in Appendix~\ref{sec:proof_proposition:permutation}.
\end{proof}

\begin{Remark}
\emph{ Note that the above matrices can depend on $n$. 
%  In addition, the above results can be extended to complex-valued matrix $P$, where   for example,  Hermitian ($P=P^\ast$),  skew-Hermitian ($P=-P^\ast$) and unitary ($P^\ast P=\mathbf I$) matrices are normal. 
}
 \end{Remark}
Inspired from  the structural results obtained in Propositions~\ref{proposition:symmetric} and~\ref{proposition:permutation},   we consider a  more general model in this paper wherein agents are partitioned into a few  heterogeneous  sub-populations and are coupled through a set of   linear regressions  of  the states and actions with  multivariate parameters.

%
%\textcolor{red}{Mention that for different matrix and different decompositions (such as shur decomposition), we will get different structures but they are all coupled through a number of  (independent or correlated) linear regressions. For the linear case, we have $AP=PA$, where any polynomial of $P$ (i.e., $\sum_{n}=1^n a(n)P^n$) is the solution. Therefore, based on any form that $P$  takes as well as the coefficients, we get different  linear regressions. }.

\section{Problem Formulation}\label{sec:problem}

Consider a stochastic dynamic control system consisting of $n \in \mathbb{N}$ agents. The agents are  partitioned into $S \in \mathbb{N}$  sub-populations  wherein  each  sub-population $s \in \mathbb{N}_S$ contains $n(s) \in \mathbb{N}$  agents, i.e.,  $\sum_{s=1}^S n(s)=n$.  Let $x^i_t \in \mathbb{R}^{d^s_x}$ and $u^i_t \in \mathbb{R}^{d^s_u}$, respectively, denote the state and action of agent $i$ in sub-population $s \in \mathbb{N}_S$ at time $t \in [0, \infty)$,  where $d^s_x,d^s_u \in \mathbb{N}$. Let $\alpha^{i,j}(s) \in \mathbb{R}$  be the  \emph{influence factor} of agent $i \in \mathbb{N}_{n(s)}$ associated with the $j$-th feature of  sub-population~$s$,  $j \in \mathbb{N}_{f(s)}, f(s) \in \mathbb{N}$. The influence factors are orthogonal vectors in the feature space such that
\begin{equation}\label{eq:orthogonal_features}
\frac{1}{n(s)}\sum_{i=1}^{n(s)} \alpha^{i,j}(s) \alpha^{i,j'}(s)=\ID{j= j'}, \quad j,j' \in \mathbb{N}_{f(s)}.
\end{equation}
For any feature $j \in \mathbb{N}_{f(s)}$ of sub-population $s \in \mathbb{N}_S$ at time $t \in [0, \infty)$, define the following linear regressions (weighted averages) of the states and actions of agents:
\begin{equation}\label{eq:mf1}
 \bar x_t^j(s):=\frac{1}{n(s)}\sum_{i=1}^{n(s)} \alpha^{i,j}(s) x^i_t, \quad \bar u_t^j(s):=\frac{1}{n(s)}\sum_{i=1}^{n(s)} \alpha^{i,j}(s)u^i_t.
\end{equation} 
In the sequel, we  refer to  $\bar x_t^j(s)$ and $\bar u_t^j(s)$ as the $j$-th \emph{deep state} and \emph{deep action}  of sub-population $s$, respectively.

For every  $s \in \mathbb{N}_S$, let $\bar{\mathbf x}_t(s):=\VEC(\bar x^1_t(s),\ldots,\bar x^{f(s)}_t(s))$ and $\bar{\mathbf u}_t(s):=\VEC(\bar u^1_t(s),\ldots,\bar u^{f(s)}_t(s))$. Let also $\bar {\mathbf x}_t:=\VEC(\bar{\mathbf x}_t(1),\ldots,\bar{\mathbf x}_t(S))$ and  $\bar {\mathbf u}_t:=\VEC(\bar{\mathbf u}_t(1),\ldots,\bar{\mathbf u}_t(S))$.  The dynamics of agent~$i$ in sub-population $s$  is  affected by  other agents through the deep states and deep actions of all sub-populations  as follows:
\begin{equation}\label{eq:dynamics1}
d(x^i_t)=\big(A_t(s) x^i_t + B_t(s)u^i_t
+ \sum_{j=1}^{f(s)} \alpha^{i,j}(s)( \bar A_t^j(s) \bar{ \mathbf x}_t+  \bar B_t^j(s) \bar{\mathbf u}_t)\big) dt+ C_t(s) dw^i_t,
 \end{equation}
  where   $A_t(s)$,  $B_t(s)$,  $\bar A_t^j(s)$, $\bar B_t^j(s)$  and $C_t(s)$  are time-varying matrices  of  appropriate dimensions,  and     $\{w^i_t \in \mathbb{R}^{d^s_x} | t \in [0,\infty)\}$  is an  $d^s_x$-dimensional  standard Brownian motion.  Let  $ \Sigma^{w}_t(s):=C_t(s) (C_t(s))^\intercal$ for every agent $i \in \mathbb{N}_{n(s)}$ in sub-population $s \in \mathbb{N}_S$ at time $t \in [0,\infty)$.

Denote  by $\mathbf x_t$, $\mathbf u_t$ and $\mathbf w_t$, the joint state, joint action and joint noise  of  all agents at time $t \in [0,T]$, respectively.  Let $(\Omega, \mathcal{F}, \mathbb{P}; \mathcal{F}_{t })$ be a filtered probability space, where $\mathcal{F}_t$ is an increasing sigma-algebra  generated by random variables $\{\mathbf x_0, \mathbf w_t; t \in [0,T]\}$.  It is assumed  that $\{\mathbf x_0,\mathbf w_t; t  \in [0,T]\}$ are mutually independent across  time horizon.  In addition, local noises are mutually independent across agents.    It is to be noted that the initial states can be arbitrarily correlated  across agents.

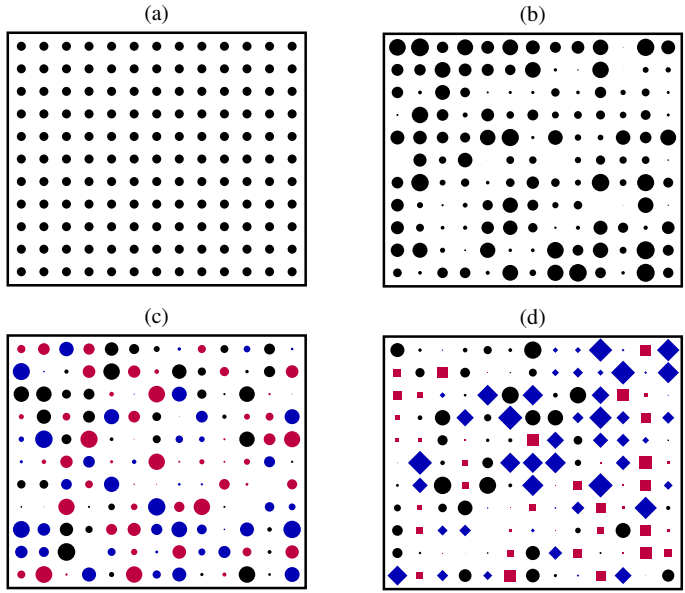
\begin{figure}
\hspace{3cm}
\scalebox{0.6}{
\begin{tikzpicture}
\pgfmathdeclarerandomlist{color}{{black}}
\foreach \x in {0,...,12}
\foreach \y in {0,...,10}
\pgfmathrandomitem{\c}{color}
\fill[color=\c] (.5*\x,.5*\y) circle  [radius=.1];
\draw [black,ultra thick] (-.3,-.3) rectangle (6.3,5.3);
\draw[font=\Large](3cm,5.7cm) node {(a)}; 
\end{tikzpicture}}

\vspace{-3.8cm}
\hspace{8cm}
\scalebox{0.6}{
\begin{tikzpicture}
\pgfmathdeclarerandomlist{color}{{black}}
\foreach \x in {0,...,12}
\foreach \y in {0,...,10}
\pgfmathrandomitem{\c}{color}
\pgfmathsetmacro{\opacVal}{rand*0.2}
\fill[color=\c] (.5*\x,.5*\y) circle  [radius=\opacVal];
\draw [black,ultra thick] (-.3,-.3) rectangle (6.3,5.3);
\draw[font=\Large](3cm,5.7cm) node {(b)}; 
\end{tikzpicture}}

\vspace{.2cm}
\hspace{3cm}
\scalebox{0.6}{
\begin{tikzpicture}{x=1cm,y=-10cm}
\pgfmathdeclarerandomlist{color}{{purple}{black}{blue}}
\foreach \x in {0,...,12}
\foreach \y in {0,...,10}
\pgfmathrandomitem{\c}{color}
\pgfmathsetmacro{\opacVal}{rand*0.2}
\fill[color=\c] (.5*\x,.5*\y) circle  [radius=\opacVal];
\draw [black,ultra thick] (-.3,-.3) rectangle (6.3,5.3);
\draw[font=\Large](3cm,5.7cm) node {(c)}; 
\end{tikzpicture}}

\vspace{-3.8cm}
\hspace{8cm}
\scalebox{0.6}{
\begin{tikzpicture}
\pgfmathdeclarerandomlist{shape}{{rectangle} {circle}{diamond}}
\pgfmathdeclarerandomlist{color}{{purple}{black}{blue}}
\foreach \x in {0,...,12}
\foreach \y in {0,...,10}
\pgfmathrandomitem{\d}{shape}
\pgfmathrandomitem{\c}{color}
\pgfmathsetmacro{\opacVal}{rand*1.2}
\draw[ black,scale=1] (.5*\x,.5*\y) node[\d, fill=\c, scale=\opacVal]{};
\draw [black,ultra thick] (-.3,-.3) rectangle (6.3,5.3);
\draw[font=\Large](3cm,5.7cm) node {(d)}; 
\end{tikzpicture}} 
\caption{To  visualize the complexity  of the  interaction between agents,  the roles of  influence  factor, feature, and sub-population are displayed by the size, color and shape of the agents, respectively. In plot (a), agents  have homogeneous weights with single feature and single sub-population. In plot (b), agents have heterogeneous weights  with single feature and single sub-population. In plot (c), agents have heterogeneous  weights with multiple features and single sub-population. In plot (d), agents have  heterogeneous  weights with multiple  features and multiple sub-populations.}
\end{figure}

Let  $r^i_t \in \mathbb{R}^{d^s_x}$ be the desired operating point of agent $i$ in sub-population $s$  at  time $t \in [0,\infty)$.
% It is assumed that  the first derivative of the  signal $r^i_t$  is a continuous function in time. 
 The cost of agent $i$ in sub-population $s \in \mathbb{N}_S$ over the the finite horizon specified by~$T$ is defined as:
 \begin{equation}\label{eq:cost1}
J^{i}_T= \Norm{x^i_T-r^i_T}{Q_T(s)}  + \Norm{\bar{\mathbf x}_T}{\bar Q_T(s)} 
      +   \int_{0}^T \big(\Norm{x^i_t-r^i_t}{Q_t(s)} + \Norm{u^i_t}{R_t(s)}     + \Norm{\bar{\mathbf x}_t}{\bar Q_t(s)} +  \Norm{\bar{\mathbf u}_t}{\bar R_t(s)}   \big)dt,
\end{equation} 
where   matrices $Q_t(s)$, $R_t(s)$,   $\bar Q_t(s)$ and  $\bar R_t(s)$, $t \in [0,T]$,  are symmetric with appropriate dimensions.  To have a well-posed problem,   it is assumed  that the expectation and covariance matrices  of initial states as well as local noises are uniformly bounded in time  and number of agents, and that  the set of admissible control actions are  adapted to the filtration $\mathcal{F}_t$ and  square integrable   for  all agents, i.e.   $\Exp{\sum_{t=1}^T (u^i_t)^\intercal u^i_t} < \infty$, $\forall i \in \mathbb{N}_n(s)$, $\forall s \in \mathbb{N}_S$.

Let $\mu(s) \in  (0,\infty)$ denote the influence factor of  sub-population $s \in \mathbb{N}_S$  among all sub-populations.
 As an example, the influence factor of sub-population $s$ may be defined based on its size in a population  or   its topological  configuration in a network. In the sequel, we refer to $\alpha^{i,j}(s)$ as  the microscopic influence factor (that determines the influence of an individual agent $i$ in  sub-population $s$) and to $\mu(s)$ as the macroscopic  influence factor (that indicates the influence of sub-population~$s$ on the entire population). Let
\begin{equation}\label{eq:total_cost}
\bar J^n_T:=\sum_{s=1}^S  \frac{\mu(s)}{n(s)} \sum_{i=1}^{n(s)} J^{i}_T,
\end{equation}
where  superscript $n$ refers to the dependency with respect to the number of agents.

\begin{Definition}[Weakly coupled agents]\label{def:weakly}
The agents are said to be weakly coupled in dynamics if the coupling term in~\eqref{eq:dynamics1} at time $t\in [0,T]$ reduces to the following form:
%\begin{equation}
$\sum_{j=1}^{f(s)} \alpha^{i,j}(s)(\bar A^j_t(s)\bar x^j_t(s)+\bar B^j_t(s)\bar u^j_t(s))$.
%\end{equation}
Similarly, the  agents are said to be weakly coupled in cost function if the coupling term in~\eqref{eq:total_cost}  at time $t\in [0,T]$ is:
%\begin{equation}
$\sum_{s=1}^S \mu(s)\sum_{j=1}^{f(s)} \big(\Norm{\bar x^j_t(s)}{\bar Q^j_t(s)}+ \Norm{\bar u^j_t(s)}{\bar R^j_t(s)}\big)$.
%\end{equation} 
\end{Definition}
The weakly coupled agents  emerge in various applications such as  those with equivalent structure  in Propositions~\ref{proposition:symmetric} and~\ref{proposition:permutation}.

\begin{Remark}[Partially exchangeable systems]
\emph{It is shown in~\cite[Chapter 2]{arabneydi2016new} that any  partially exchangeable LQ system\footnote{A system (population) is said to be  partially exchangeable if exchanging  any pair of  agents in each sub-population does not affect the dynamics and cost function of the entire system.} can be equivalently represented as an LQ deep structured system, where the number of orthogonal  features in each sub-population  is one and the agents   are equally important  within their own sub-populations, i.e.  $\alpha^{i,j}=\frac{1}{n(s)}$, $\forall i \in \mathbb{N}_{n(s)}, j=1$).}
\end{Remark}

\begin{figure*}
\hspace{-8.4cm}
\scalebox{0.7}{
\begin{tikzpicture}[x=1.5cm, y=1.2cm, >=stealth]
  \draw[font=\large](0,3cm) node {Layer $t$} ;
\draw[font=\large](2.9cm,3cm) node {Layer $t+\delta t$} ;
\draw[font=\large](10cm,-5cm) node {Sub-population $  s$} ;
\draw[black, thick, dashed] (-1.2,2.1) rectangle (14.5,-3.8);

\foreach \m/\l [count=\y] in {1,missing,2,missing,3}
  \node [every neuron/.try, neuron \m/.try] (input-\m) at (0,2-\y) {};

\foreach \m/\l [count=\y] in {1,missing,2,missing,3}
  \node [every neuron/.try, neuron \m/.try ] (output-\m) at (2,2-\y) {};

  \draw [<-] (input-1) -- ++(-1,0)
    node [above,midway] {\hspace{3.5cm}\large $ x^1_t$}
        node [below,midway] {\hspace{3.5cm}\large $u^1_t$};
     \draw [<-] (input-2) -- ++(-1,0)
    node [above,midway] {\hspace{3.5cm} \large $x^i_t$}
        node [below,midway] {\hspace{3.5cm} \large $u^i_t$};
        
          \draw [<-] (input-3) -- ++(-1,0)
    node [above,midway] {\hspace{3.8cm} \large $x^{n(s)}_t$}
        node [below,midway] {\hspace{3.8cm} \large $u^{n(s)}_t$};

  \draw [->]  (output-1) -- ++(1,0)  node[above, midway]  
   {\hspace*{15.5cm} \large $x^1_{t+\delta t}=x^1_t+\int_{t}^{t+\delta t} \phi^{ s}_\tau(x^1_\tau,u^1_\tau, \textcolor{red}{ \{\bar{x}^1_\tau(s),\ldots,  \bar{x}^{f( s)}_\tau(s)\}_{{ s=1}}^{ S}, \{\bar{u}^1_\tau(s),\ldots,\bar u_\tau^{f( s)}(s)\}_{{s=1}}^{{S}}})\delta \tau+ \int_{t}^{t+\delta t}  dw^1_\tau,$}
     node [below,midway] {\hspace{2.8cm} \large $u^1_{t+\delta t} =g^1_{t+ \delta t}( I^1_{t+\delta t})$,};   
      \draw [->]  (output-2) -- ++(1,0)  node[above, midway]    {\hspace*{15.5cm} \large $x^i_{t+\delta t}=x^i_t+\int_{t}^{t+\delta t} \phi^s_\tau(x^i_\tau,u^i_\tau, \textcolor{red}{ \{\bar{x}^1_\tau(s),\ldots,  \bar{x}^{f(s)}_\tau(s)\}_{s=1}^S, \{\bar{u}^1_\tau(s),\ldots,\bar u_\tau^{f(s)}(s)\}_{s=1}^S})\delta \tau+ \int_{t}^{t+\delta t}  dw^i_\tau,$}
     node [below,midway] {\hspace{2.8cm} \large $u^i_{t+\delta t} =g^i_{t+ \delta t}( I^i_{t+\delta t})$,}; 
    
      \draw [->]  (output-3) -- ++(1,0)  node[above, midway]    {\hspace*{17cm} \large  $x^{n(s)}_{t+\delta t}=x^{n(s)}_t+\int_{t}^{t+\delta t} \phi^s_\tau(x^{n(s)}_\tau,u^{n(s)}_\tau, \textcolor{red}{ \{\bar{x}^1_\tau(s),\ldots,  \bar{x}^{f(s)}_\tau(s)\}_{s=1}^S, \{\bar{u}^1_\tau(s),\ldots,\bar u_\tau^{f(s)}(s)\}_{s=1}^S})\delta \tau+ \int_{t}^{t+\delta t}  dw^{n(s)}_\tau,$}
     node [below,midway] {\hspace{2.6cm} \large $u^{n(s)}_{t+\delta t} =g^{n(s)}_{t+ \delta t}( I^{n(s)}_{t+\delta t})$.};    
    \foreach \i in {1,...,3}
  \foreach \j in {1,...,3}
    \draw [->] (input-\i) -- (output-\j);    
\end{tikzpicture}
}
\caption{The interaction between agents  in a deep structured teams is similar to that between the neurons in a deep feed-forward neural network, where $\phi^s_t$ is an affine function in this article, and $I^i_t$ denotes the information set of agent $i \in \mathbb{N}_{n(s)}$ in sub-population $s \in \mathbb{N}_S$ at time $t \in [0,T]$.}
\end{figure*}
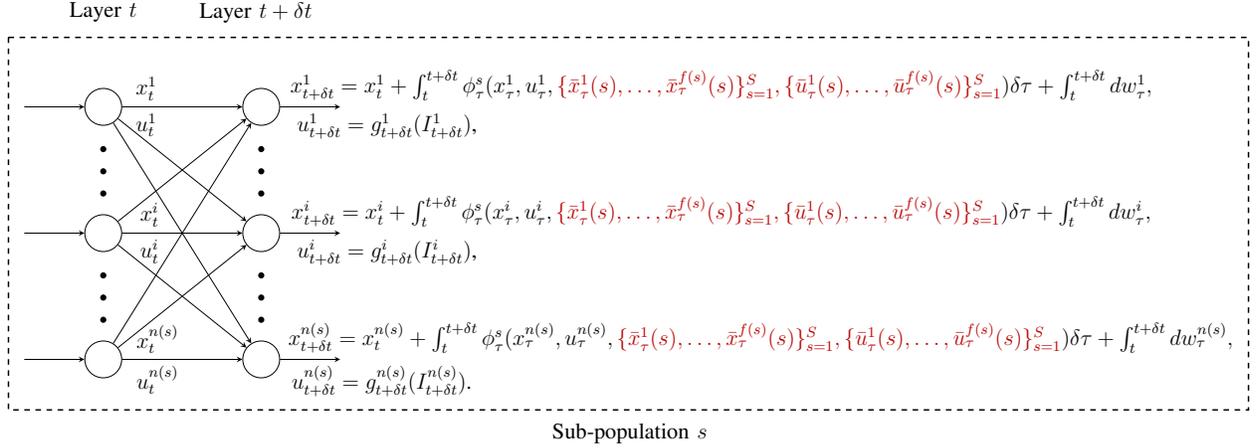

\subsection{Information structure}
Similar to~\cite{Jalal2019MFT}, in this paper, we consider two non-classical information structures. The first one is called \emph{deep-state sharing} (DSS), where any agent $i \in \mathbb{N}_{n(s)}$ of sub-population $s \in \mathbb{N}_S$ observes its local state $x^i_t$ as well as the joint deep state $\bar {\mathbf x}_t$, i.e.,
\begin{equation}
u^i_t=g^i_t(x^i_{t}, \bar{ \mathbf x}_{t}), \tag{DSS}
\end{equation}
where $g^i_t$ is called  the control law of agent $i$ at time $t \in [0,T]$, which is a measurable function adapted  to the $\sigma$-algebra generated by $\{x^i_0,\bar{ \mathbf x}_{0}, w^i_t,\bar{ \mathbf w}_{t}; t \in [0,T]\}$.  In practice,  deep states can be shared among agents in various ways. For example,   in a stock market, the total amount of shares, sales and  trades are often announced publicly, and hence are available  to buyers, sellers and traders.   Another example is  cloud-based applications, where a  central server collects and shares the deep states.  Alternatively, the deep states can be computed in a distributed manner by local communications among agents using a  consensus algorithms under some  assumptions, e.g. in a swarm of robots, where the  control process is often slower than  the communication  process.  The second information  structure is called~\emph{partial deep-state sharing} (PDSS), where each agent $i$  at time $t \in [0,T]$ observes its local state as well as a subset $\mathcal{O} \subseteq \mathbb{N}_S$ of the deep states, i.e.,
\begin{equation}
u^i_t=g^i_t(x^i_{t}, (\bar{\mathbf x}_t(s))_{s \in \mathcal{O}}), \tag{PDSS}
\end{equation}
where $g^i_t$ is a measurable function adapted to the $\sigma$-algebra generated by $\{x^i_0, (\bar{\mathbf x}_0(s))_{s \in \mathcal{O}}, w^i_t,(\bar{\mathbf w}_t(s))_{s \in \mathcal{O}}; t \in [0,T]\}$.
 When a sub-population is large,  collecting  its deep states  and sharing them with other  agents may  be infeasible. In this case, PDSS  structure is desirable as it does not contain the deep states of large sub-populations. Note that deep-state sharing and no-sharing  information structures are special cases of PDSS, associated with $\mathcal{O}=\mathbb{N}_S$ and $\mathcal{O}=\emptyset$, respectively. It is worth highlighting  that the privacy of  agents are respected  under the above  structures because  their local states  are not shared  with others.   In the sequel, we refer to $\mathbf g:=\{\{\{g^i_t\}_{i=1}^{n(s)}\}_{s=1}^S; t \in [0,T]\}$
 as the \emph{strategy} of the system,  which is the collection of  control laws across  control horizon.
\subsection{Problem statement}\label{sec:problem_statement} 
 For any \emph{risk parameter} $\lambda \in (0,\infty)$, define the following risk-sensitive cost function:
\begin{equation}\label{eq:total_cost_risk}
\tilde J^n_T(\mathbf g):=\frac{1}{\lambda} \log \Exp{e^{\lambda J_T}},
\end{equation}
where argument $\mathbf g$ refers to the dependency of the right-hand side of~\eqref{eq:total_cost_risk} to the strategy of the system.
 By twice  using  the  Taylor series expansion   and keeping the first two dominant terms, it is shown in~\cite{jacobson1973optimal,whittle1981risk}  that   for small~$\lambda$,
 $\tilde J^n_T \approx \Exp{\bar J^n_T} +\frac{\lambda }{2} \VAR(\bar J^n_T)$.
Therefore, the risk-sensitive cost function converges to the risk-neutral one  as $\lambda$ goes to zero, i.e.,  $\lim_{\lambda \rightarrow 0} \tilde J^n_T=\Exp{\bar J^n_T}$. An immediate implication  of the above approximation is that  the risk-factor $\lambda$  balances the trade off between optimality (where $\lambda \rightarrow 0$) and robustness (where robustness is defined in terms of minimum variance). In other words,  the risk-sensitive cost function takes into account not only the optimality but also the robustness of the design.

The problems investigated in this paper  are defined below.

\begin{Problem}\label{prob1}
For deep-state sharing information structure, find an optimal strategy $\mathbf g^\ast$  s.t. $\tilde J^n_T(\mathbf g^\ast) \leq \tilde J^n_T(\mathbf g)$, $\forall \mathbf g$.
\end{Problem}

\begin{Remark}
\emph{For the special case of single sub-population (i.e. $S=1$), single agent (i.e. $n(s)=1$, $s \in \mathbb{N}_S$) and single feature (i.e. $f(s)=1$, $s \in \mathbb{N}_S$), Problem~\ref{prob1}  reduces to  the classical   control problem in~\cite{jacobson1973optimal,whittle1981risk,bacsar2008h}. In this light,    deep structured teams with DSS structure  may be viewed as a generalization   of the  single-agent (classical) control problems.}
\end{Remark}

Let $n^\star$ be the smallest sub-population whose deep state is not shared, i.e. $n^\star:=\min_{s \notin \mathcal{O}} n(s)$.

\begin{Problem}\label{prob2}
For  partial deep-state sharing information structure, find a sub-optimal strategy $\hat{\mathbf g}$ such that  $\tilde J^n_T(\hat{\mathbf g}) \leq \tilde J^n_T(\mathbf g)+\varepsilon(n^\ast)$,  $\forall \mathbf g$, where $\lim_{n^\ast \rightarrow \infty} \varepsilon(n^\ast)=0$.
\end{Problem}

\subsection{Main challenges and contributions}
 There are two main challenges in order to solve Problems~\ref{prob1} and~\ref{prob2}. The first one is the \emph{curse of dimensionality} because  the coupling between agents in  dynamics~\eqref{eq:dynamics1} and cost function~\eqref{eq:total_cost_risk} are fully dense with no sparsity,  where  computational complexity increases with the number of agents (i.e.,  intractable for large-scale problems). The second challenge is the \emph{non-classical (decentralized) nature of the information structure}, where the resultant optimization is non-convex~\cite{Witsenhausen1968Counterexample}  and the dynamic programming decomposition is not applicable due to the fact that  the agents cannot find a  sufficient statistic for  sequential decomposition. The above  challenges are exacerbated upon noting that the certainty equivalence principle does not hold for the risk-sensitive cost function. The main contributions of this article are outlined below.
\begin{enumerate}

  \item  We  study the notions of equivariance and partial equivariance in linear quadratic models, and  inspired by the obtained  structural results (Propositions~\ref{prop:polynomial}--\ref{proposition:permutation}) as well as recent architectural developments in deep learning, we  introduce LQ  deep structured teams. We propose  a gauge transformation technique to identify a closed-form tractable representation of the optimal solution for any  arbitrary number of agents under DSS structure, where the feedback gains are computed by a  deep Riccati equation whose dimension is independent of the number of agents in each sub-population (Theorem~\ref{thm:mfs}).

\item We  show that the effect of  the risk factor has a reverse relationship with  the number of agents, implying that  the risk-sensitive  solution converges to the risk-neutral one as the number of agents goes to infinity (Corollary~\ref{cor:risk}).   In addition,  we introduce extra optimization factors for the risk-neutral case (Corollary~\ref{cor:optimization}). 

 \item We establish an explicit  connection with a deep feed-forward neural network and demonstrate its usefulness in advancing our knowledge of   deep learning methods as well as designing  more efficient  networks.
 
 \item We propose two  asymptotically optimal  strategies under PDSS structure by  introducing two Kalman-like filters, one based on the finite-population model and one based on the infinite-population one (Theorem~\ref{thm:pmfs}). Furthermore,  we extend our   main results to the   infinite-horizon cost function (Theorem~\ref{cor:infinite}). 
\end{enumerate}

\section{Main Results with deep-state sharing} \label{sec:main}
 
Define the following matrices for every sub-population $s \in \mathbb{N}_S$ at any time $t \in [0,T]$:
\begin{align}
\bar {\mathbf A}_t(s)&:=[\mathbf 0_{f(s)d^s_x\times f(1)d^1_x},\ldots,\tilde{\mathbf A}_t(s),\ldots,\mathbf 0_{f(s)d^s_x\times f(S)d^S_x}]+\ROW(\bar A^1_t(s),\ldots,\bar A^{f(s)}_t(s)),\\
\bar {\mathbf B}_t(s)&:=[\mathbf 0_{f(s)d^s_u\times f(1)d^1_u},\ldots,\tilde{\mathbf B}_t(s),\ldots,\mathbf 0_{f(s)d^s_u\times f(S)d^S_u}] +\ROW(\bar B^1_t(s),\ldots,\bar B^{f(s)}_t(s)),
\end{align}
where 
\begin{align}
\tilde {\mathbf A}_t(s)&:=\DIAG(A_t(s))_{f(s)},\quad \bar {\mathbf Q}_t(s):=\DIAG(Q_t(s))_{f(s)},\\
\tilde {\mathbf B}_t(s)&:=\DIAG(B_t(s))_{f(s)},\quad \bar {\mathbf R}_t(s):= \DIAG(R_t(s))_{f(s)},\\
\bar {\mathbf C}_t(s)&:=\DIAG(C_t(s))_{f(s)},\quad \bar{\boldsymbol \Sigma}^w_t(s):=\DIAG(\Sigma^w_t(s))_{f(s)}.
\end{align}
In addition, define the followings for the entire population:
\begin{align}\label{eq:augmented_variables}
\bar{\mathbf A}_t  &:=\ROW(\bar{\mathbf A}_t(1),\ldots, \bar{\mathbf A}_t(S)), 
\bar{\mathbf B}_t:=\ROW(\bar{\mathbf B}_t(1),\ldots, \bar{\mathbf B}_t(S)),  \nonumber  \\
\bar{\mathbf Q}_t&:=\DIAG(\mu(1)\bar{\mathbf Q}_t(1),\ldots,\mu(S)\bar{\mathbf Q}_t(S))+\sum_{s=1}^S \mu(s) \bar Q_t(s), \nonumber  \\
\bar{\mathbf R}_t&:=\DIAG(\mu(1)\bar{\mathbf R}_t(1),\ldots,\mu(S)\bar{\mathbf R}_t(S))+\sum_{s=1}^S \mu(s) \bar R_t(s), \nonumber  \\
\bar{\boldsymbol  \Sigma}^w_t&:= \DIAG(\frac{1}{n(1)} \bar{\boldsymbol \Sigma}^{w}_t(1),\ldots, \frac{1}{n(S)} \bar{\boldsymbol \Sigma}^{w}_t(S)), \nonumber  \\
\bar{\mathbf C}_t&:=\DIAG(\bar{\mathbf C}_t(1),\ldots,\bar{\mathbf C}_t(S)).
\end{align}
  To derive our main results, we  make the following  standard assumption on the model.

\begin{Assumption}\label{assumption1}
For any $t \in [0,T]$ and $s \in \mathbb{N}_S$,
\begin{itemize}
\item[(a)] $Q_t(s)$ and $\bar{\mathbf Q}_t$ are positive semi-definite,  and  $R_t(s)$ and $\bar{\mathbf R}_t$ are positive definite, 

\item[(b)]  $B_t(s) (R_t(s))^{-1} B_t(s){}^\intercal-  2\lambda  \frac{\mu(s)}{n(s)} \Sigma^{w}_t(s)$ and   $\bar{\mathbf B}_t \bar{\mathbf R}_t^{-1} \bar{\mathbf B}_t^\intercal - 2\lambda \bar{\boldsymbol \Sigma}^w_t$  are positive definite. 
\end{itemize}
\end{Assumption}
\begin{Remark}
\emph{
 Assumption~\ref{assumption1}.(a)  is a standard convexity condition to ensure that  the optimal  solution exists and is  unique.   Assumption~\ref{assumption1}.(b) is also a standard  positive-definiteness condition arising in the risk-sensitive linear quadratic problems. It is worth highlighting that when  matrices in the dynamics~\eqref{eq:dynamics1} and~cost functions~\eqref{eq:cost1} are independent of the size of sub-populations,  $n(s), s \in \mathbb{N}_S$, the positive-definiteness condition in Assumption~\ref{assumption1}.(b) gets  relaxed as $\lambda \rightarrow 0$ and/or $n(s) \rightarrow \infty, \forall s \in \mathbb{N}_S$, where the negative terms vanish asymptotically.
 }
\end{Remark}

\subsection{Gauge transformation and HJB equation}

In invariant mechanics,  a gauge transformation, upon existence, is a powerful tool  for  the analysis of  invariant systems. In simple words, a gauge transformation  manipulates  the degrees  of freedom of  an invariant system without changing  its structure~\cite{moriyasu1983elementary}. In this paper, we use  a gauge transformation   introduced in~\cite[Appendix A.2]{arabneydi2016new} to simplify our analysis. Define the following  auxiliary variables  for  any agent $i \in \mathbb{N}_{n(s)}$ in  sub-population $s \in \mathbb{N}_S$ at  time $t \in [0,T]$: 
\begin{align}\label{eq:gauge}
 \Delta x^i_t&:=x^i_t - \sum_{j=1}^{f(s)} \alpha^{i,j}(s) \bar x_t^j(s),  \quad  \Delta u^i_t:=u^i_t - \sum_{j=1}^{f(s)} \alpha^{i,j}(s) \bar u_t^j(s), \nonumber \\
   \Delta w^i_t&:=w^i_t - \sum_{j=1}^{f(s)} \alpha^{i,j}(s) \bar w_t^j(s), \quad 
      \Delta r^i_t:=r^i_t - \sum_{j=1}^{f(s)} \alpha^{i,j}(s) \bar r_t^j(s),
\end{align} 
where 
\begin{equation}\label{eq:bar_aux}
\bar w^j_t(s):=\frac{1}{n(s)}\sum_{i=1}^{n(s)} \alpha^{i,j}(s) w^i_t,\quad \bar r_t^j(s):=\frac{1}{n(s)}\sum_{i=1}^{n(s)} \alpha^{i,j}(s) r^i_t.
\end{equation}
For every $s \in \mathbb{N}_S$, let  $\bar{\mathbf w}_t(s):=\VEC(\bar w_t^1(s),\ldots,\bar w_t^{f(s)}(s))$ and $\bar{\mathbf r}_t(s):=\VEC(\bar r_t^1(s),\ldots,\bar r_t^{f(s)}(s))$. Let also $\bar{\mathbf w}_t:=\VEC(\bar{\mathbf w}_t(1),\ldots,\bar{\mathbf w}_t(S))$ and $\bar{\mathbf r}_t:=\VEC(\bar{\mathbf r}_t(1),\ldots,\bar{\mathbf r}_t(S))$.
\begin{Lemma}\label{lemma:linear_dep}
The gauge transformation introduces the following linear dependencies for any  $j \in \mathbb{N}_{f(s)}$,  $s \in \mathbb{N}_S$ and $t \in [0,T]$:
$\sum_{i=1}^{n(s)} \alpha^{i,j}(s) \Delta x^i_t=\mathbf 0_{d^s_x \times 1}$,  $\sum_{i=1}^{n(s)} \alpha^{i,j}(s) \Delta u^i_t=\mathbf 0_{d^s_u \times 1}$,
$\sum_{i=1}^{n(s)} \alpha^{i,j}(s) \Delta w^i_t=\mathbf 0_{d^s_x \times 1}$, $ \sum_{i=1}^{n(s)} \alpha^{i,j}(s) \Delta r^i_t=\mathbf 0_{d^s_x \times 1}$.
\end{Lemma}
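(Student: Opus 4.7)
The plan is to verify each of the four identities by direct substitution, using the definition of the gauge transformation in~\eqref{eq:gauge} together with the orthogonality property~\eqref{eq:orthogonal_features} of the influence factors. Since the four statements are structurally identical (replacing $x$ with $u$, $w$, or $r$), I would prove the claim in detail for $\Delta x^i_t$ and then observe that the same manipulation carries over verbatim to the other three variables, whose definitions in~\eqref{eq:gauge} and~\eqref{eq:bar_aux} have an identical algebraic form.

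First, I would fix $s \in \mathbb{N}_S$ and $j \in \mathbb{N}_{f(s)}$, multiply the definition $\Delta x^i_t = x^i_t - \sum_{j'=1}^{f(s)} \alpha^{i,j'}(s) \bar{x}^{j'}_t(s)$ by $\alpha^{i,j}(s)$, and sum over $i \in \mathbb{N}_{n(s)}$. The first resulting sum, $\sum_{i=1}^{n(s)} \alpha^{i,j}(s) x^i_t$, equals $n(s)\,\bar{x}^j_t(s)$ by the definition~\eqref{eq:mf1} of the deep state. For the second sum, I would interchange the order of summation to obtain
\begin{equation}
\sum_{j'=1}^{f(s)} \bar{x}^{j'}_t(s) \sum_{i=1}^{n(s)} \alpha^{i,j}(s)\,\alpha^{i,j'}(s) = \sum_{j'=1}^{f(s)} \bar{x}^{j'}_t(s)\, n(s)\, \ID{j=j'} = n(s)\,\bar{x}^j_t(s),
\end{equation}
where the first equality invokes the orthogonality relation~\eqref{eq:orthogonal_features}. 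Subtracting, the two contributions cancel and yield $\sum_{i=1}^{n(s)} \alpha^{i,j}(s)\,\Delta x^i_t = \mathbf{0}_{d^s_x \times 1}$, as claimed.

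For the remaining identities, I would simply note that $\Delta u^i_t$, $\Delta w^i_t$, and $\Delta r^i_t$ are defined in exactly the same additive form as $\Delta x^i_t$, with $\bar{u}^j_t(s)$, $\bar{w}^j_t(s)$, and $\bar{r}^j_t(s)$ built from $u^i_t$, $w^i_t$, and $r^i_t$ via the same weighted averages with weights $\alpha^{i,j}(s)/n(s)$. Hence replacing $x$ by each of $u$, $w$, $r$ throughout the calculation above produces the three remaining statements without further work. No analytical obstacle arises: the lemma is essentially a restatement of the Parseval-type identity induced by the orthogonality of the influence vectors in the feature space, and the proof reduces to one line of index manipulation.
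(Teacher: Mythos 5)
Your proof is correct and follows exactly the route the paper intends: its one-line proof ("follows directly from \eqref{eq:orthogonal_features}, \eqref{eq:mf1} and \eqref{eq:gauge}") is precisely the index manipulation you carry out explicitly — sum the definition of $\Delta x^i_t$ against $\alpha^{i,j}(s)$, identify the first term as $n(s)\bar x^j_t(s)$ via \eqref{eq:mf1}, and collapse the double sum via the orthogonality relation \eqref{eq:orthogonal_features} so the two terms cancel. The extension to $u$, $w$, $r$ by structural identity of the definitions in \eqref{eq:gauge} and \eqref{eq:bar_aux} is likewise exactly what the paper relies on.
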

\begin{proof}
The proof follows directly from~\eqref{eq:orthogonal_features},~\eqref{eq:mf1} and~\eqref{eq:gauge}.
\end{proof}
\begin{Lemma}\label{lemma:orthogonal}
The gauge transformation  imposes the following orthogonal relations for any   $j \in \mathbb{N}_{f(s)}$, $s \in \mathbb{N}_S$ and  $t \in [0,T]$:
$\sum_{i=1}^{n(s)} \alpha^{i,j}(s) (\Delta x^i_t - \Delta r^i_t)^\intercal Q_t(s) (\bar x^j_t(s) - \bar r^j_t(s))=0$ and 
$\sum_{i=1}^{n(s)} \alpha^{i,j}(s) (\Delta u^i_t)^\intercal R_t(s) \bar u^j_t(s)=0$.

\end{Lemma}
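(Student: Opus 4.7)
The plan is to observe that both identities follow almost immediately from the linear dependencies already established in Lemma~\ref{lemma:linear_dep}, by exploiting the fact that the factors $Q_t(s)(\bar x^j_t(s)-\bar r^j_t(s))$ and $R_t(s)\bar u^j_t(s)$ do not depend on the summation index $i$. So I would first fix an arbitrary $s\in\mathbb{N}_S$, $j\in\mathbb{N}_{f(s)}$, and $t\in[0,T]$, and pull the $i$-independent factors outside the sum.

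For the first identity, I would write
\begin{equation}
\sum_{i=1}^{n(s)} \alpha^{i,j}(s)(\Delta x^i_t-\Delta r^i_t)^\intercal Q_t(s)(\bar x^j_t(s)-\bar r^j_t(s)) = \left(\sum_{i=1}^{n(s)} \alpha^{i,j}(s)(\Delta x^i_t-\Delta r^i_t)\right)^{\!\intercal} Q_t(s)(\bar x^j_t(s)-\bar r^j_t(s)),
\end{equation}
and then split the inner sum into $\sum_i \alpha^{i,j}(s)\Delta x^i_t - \sum_i \alpha^{i,j}(s)\Delta r^i_t$. By Lemma~\ref{lemma:linear_dep}, each of these two sums equals $\mathbf 0_{d^s_x\times 1}$, so the whole expression vanishes. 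The second identity is treated identically: factor out $R_t(s)\bar u^j_t(s)$ to obtain $(\sum_i \alpha^{i,j}(s)\Delta u^i_t)^\intercal R_t(s)\bar u^j_t(s)$, and invoke Lemma~\ref{lemma:linear_dep} to conclude that the parenthesized sum is $\mathbf 0_{d^s_u\times 1}$.

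There is no genuine obstacle here; the only thing to be careful about is the bookkeeping that the dimensions match when pulling the $i$-independent factor out of the sum (the transposed vector sum has the right dimension to left-multiply $Q_t(s)$ and $R_t(s)$, which is guaranteed by construction). The lemma therefore amounts to an elementary consequence of Lemma~\ref{lemma:linear_dep}, and no further machinery is needed.
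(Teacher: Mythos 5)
Your proof is correct and follows essentially the same route as the paper, which simply cites the defining equations \eqref{eq:orthogonal_features}, \eqref{eq:mf1} and \eqref{eq:gauge}; routing the argument through Lemma~\ref{lemma:linear_dep} (pulling the $i$-independent factor $Q_t(s)(\bar x^j_t(s)-\bar r^j_t(s))$, respectively $R_t(s)\bar u^j_t(s)$, out of the sum and invoking the vanishing of $\sum_i \alpha^{i,j}(s)\Delta x^i_t$, $\sum_i \alpha^{i,j}(s)\Delta r^i_t$ and $\sum_i \alpha^{i,j}(s)\Delta u^i_t$) is just a cleaner packaging of the same computation.
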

\begin{proof}
The proof follows directly from~\eqref{eq:orthogonal_features},~\eqref{eq:mf1} and~\eqref{eq:gauge}.
\end{proof}
\begin{Lemma}\label{lemma:correlation}
Given any sub-population $s \in \mathbb{N}_S$,  the gauge transformation  induces the following correlations for any  $i,i' \in \mathbb{N}_{n(s)}$, $j,j' \in \mathbb{N}_{f(s)}$ and $t \in [0,T]$:
$\Exp{\Delta w^i_t) (\Delta w^i_t)^\intercal}=t  (1 -\frac{1}{n(s)} \sum_{j=1}^{f(s)} (\alpha^{i,j}(s))^2)  \mathbf{I}_{d^s_x \times d^s_x}$, 
 $\Exp{\Delta w^i_t (\bar w^j_t(s))^\intercal}=\mathbf{0}_{d^s_x \times d^s_x}$,
 $\Exp{\bar w^j_t(s) (\bar w^j_t(s))^\intercal}=\frac{t}{n(s)}\mathbf{I}_{d^s_x \times d^s_x}$,  $
\Exp{\bar w^j_t(s) (\bar w^{j'}_t(s))^\intercal}=\mathbf{0}_{d^s_x \times d^s_x}$,  and $\Exp{\Delta w^i_t(\Delta w^{i'}_t)^\intercal}=t\frac{1}{n(s)}\sum_{j=1}^{f(s)} \alpha^{i,j}(s) \alpha^{i',j}(s) \mathbf{I}_{d^s_x \times d^s_x}$.
\end{Lemma}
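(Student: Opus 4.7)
The plan is to derive all five identities by direct calculation from the definitions of $\bar w^j_t(s)$ in~\eqref{eq:bar_aux} and $\Delta w^i_t$ in~\eqref{eq:gauge}, relying on just two inputs: (i) the mutual independence of the local standard Brownian motions across agents, which gives $\Exp{w^i_t (w^{i'}_t)^\intercal} = t\,\ID{i = i'}\mathbf{I}_{d^s_x \times d^s_x}$, and (ii) the orthogonality of the influence vectors stated in~\eqref{eq:orthogonal_features}. Throughout, the calculation is linear and the only subtlety is careful bookkeeping of indices.

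First I would handle the pair $\Exp{\bar w^j_t(s) (\bar w^{j'}_t(s))^\intercal}$. Substituting the definition of $\bar w^j_t(s)$ and moving expectation inside the double sum reduces the expression to $\frac{1}{n(s)^2}\sum_{i,i'} \alpha^{i,j}(s) \alpha^{i',j'}(s) \Exp{w^i_t (w^{i'}_t)^\intercal}$. Independence collapses the double sum to a single sum over $i$, and then~\eqref{eq:orthogonal_features} immediately yields $\frac{t}{n(s)} \ID{j=j'} \mathbf{I}_{d^s_x \times d^s_x}$, which simultaneously establishes the third and fourth identities.

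As a stepping stone I would record the auxiliary identity $\Exp{w^i_t (\bar w^{j'}_t(s))^\intercal} = \frac{t}{n(s)} \alpha^{i,j'}(s)\mathbf{I}_{d^s_x \times d^s_x}$, again a direct consequence of independence. Combining this with the identity from the previous step and expanding $\Delta w^i_t = w^i_t - \sum_j \alpha^{i,j}(s) \bar w^j_t(s)$, the cross correlation $\Exp{\Delta w^i_t (\bar w^{j'}_t(s))^\intercal}$ splits into two contributions, both equal to $\frac{t}{n(s)} \alpha^{i,j'}(s)\mathbf{I}_{d^s_x \times d^s_x}$, which cancel; this gives the second identity.

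Finally, for the first and fifth identities I would expand $\Exp{\Delta w^i_t (\Delta w^{i'}_t)^\intercal}$ using the definition of $\Delta w$ in both arguments. The second identity lets me discard the sum in $j'$ against $\Delta w^i_t$ entirely, reducing the expression to $\Exp{\Delta w^i_t (w^{i'}_t)^\intercal} = \Exp{w^i_t (w^{i'}_t)^\intercal} - \sum_j \alpha^{i,j}(s)\Exp{\bar w^j_t(s)(w^{i'}_t)^\intercal}$. Substituting the auxiliary identity and invoking independence of $w^i_t$ and $w^{i'}_t$ produces a closed form of the shape $t\bigl(\ID{i=i'} - \tfrac{1}{n(s)}\sum_j \alpha^{i,j}(s)\alpha^{i',j}(s)\bigr)\mathbf{I}_{d^s_x \times d^s_x}$; specializing to $i = i'$ and using $\sum_j(\alpha^{i,j}(s))^2$ recovers the first identity, while the case $i \ne i'$ gives the fifth. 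No step is conceptually difficult; the main obstacle is keeping the four-term expansion of $\Exp{\Delta w^i_t (\Delta w^{i'}_t)^\intercal}$ organized and applying the orthogonality relation at the right index pair.
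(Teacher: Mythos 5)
Your proof is correct and follows essentially the same route as the paper, which simply invokes the definitions~\eqref{eq:mf1},~\eqref{eq:gauge},~\eqref{eq:orthogonal_features} together with the independence and zero mean of the local Brownian motions; you have merely written out the bookkeeping that the paper leaves implicit. One remark: your final closed form $t\bigl(\ID{i=i'}-\tfrac{1}{n(s)}\sum_{j}\alpha^{i,j}(s)\alpha^{i',j}(s)\bigr)\mathbf I_{d^s_x\times d^s_x}$ is the right answer, but note that for $i\neq i'$ it carries a \emph{minus} sign that is absent from the fifth identity as printed in the lemma; the printed statement appears to have a sign typo, since the proof of Lemma~\ref{lemma:p1} uses $-\sum_{j=1}^{f(s)}\alpha^{i,j}(s)\alpha^{i',j}(s)/n(s)$ for the off-diagonal correlation, exactly as your computation gives.
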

\begin{proof}
The proof follows  from   equations~\eqref{eq:orthogonal_features},~\eqref{eq:mf1} and~\eqref{eq:gauge}, and the fact that (a) local noises are i.i.d. $d^s_x$-dimensional standard Brownian motions in each sub-population $s \in \mathbb{N}_S$; (b) the covariance matrix of a summation of independent random vectors is equal to the summation of their covariance matrices, and  (c) local noises have zero mean.
\end{proof}

 From  Lemma~\ref{lemma:linear_dep} and equations~\eqref{eq:mf1},~\eqref{eq:dynamics1},~\eqref{eq:augmented_variables} and~\eqref{eq:gauge},  the dynamics of the transformed variables can be described  by:
\begin{align}\label{eq:dynamics_mean-field_1}
&d(\begin{bmatrix}
\Delta x^i_t\\
\bar{\mathbf x}_t
\end{bmatrix})
=\big(\DIAG(A_t(s),\bar{\mathbf A}_t) \begin{bmatrix}
\Delta x^i_t\\
\bar{\mathbf x}_t
\end{bmatrix}  +
\DIAG(B_t(s),\bar{\mathbf B}_t) \begin{bmatrix}
\Delta u^i_t\\
\bar{\mathbf u}_t
\end{bmatrix} 
\big)dt
+
\DIAG(C_t(s),\bar{\mathbf C}_t) d(\begin{bmatrix}
\Delta w^i_t\\
\bar{\mathbf w}_t
\end{bmatrix}).
\end{align}
 From Lemma~\ref{lemma:orthogonal} and after some algebraic manipulations,  cost function~\eqref{eq:total_cost} from time~$t$ up to $T$  can be expressed by
 \begin{equation}\label{eq:decomposition_cost}
\bar J^n_{t:T}:= \int_{\tau=t}^T (\bar{\mathbf L}_\tau+ \sum_{s=1}^S \frac{\mu(s)}{n(s)} \sum_{i=1}^{n(s)} \Delta L^i_\tau) d\tau,
\end{equation}
where
$\Delta L^i_t := \Norm{\Delta x^i_t - \Delta r^i_t}{Q_t(s)} + \Norm{ \Delta u^i_t}{R_t(s)}$ and 
$\mathbf L_t := \Norm{\bar{\mathbf x}_t - \bar{\mathbf r}_t}{ \DIAG(\mu(1)Q_t(1),\ldots,\mu(S)Q_t(S))} 
 + \Norm{\bar{\mathbf x}_t}{\sum_{s=1}^S \mu(s) \bar Q_t(s)} 
+ \Norm{\bar{ \mathbf u}_t}{\bar{\mathbf R}_t}$.
 Denote by  $ \mathbf y_t:=\VEC(\VEC(\VEC(\Delta x^i_t)_{i=1}^{n(s)})_{s=1}^S, \bar{\mathbf x}_t)$ and  $ \mathbf v_t:= \VEC(\VEC(\VEC(\Delta u^i_t)_{i=1}^{n(s)})_{s=1}^S, \bar{\mathbf u}_t)$  the centralized state and action of the transformed system  at time $t \in [0,T]$, respectively. Suppose for now that  $\mathbf y_t$   is known to all agents.  It will be shown later that the  optimal centralized solution can be implemented under DSS structure.  Define a real-valued function $\psi_t$ at time $t \in [0,T]$ such that
\begin{equation}\label{eq:def_psi}
\psi_t(\mathbf y):= \Exp{e^{ \lambda \bar J^n_{t:T}} \mid \mathbf y_t=\mathbf y},
\end{equation}
where  $ \tilde J^n_T= \frac{1}{\lambda}  \Exp{\log \psi_0(\mathbf y_0)}$ according to~\eqref{eq:total_cost_risk},~\eqref{eq:decomposition_cost} and~\eqref{eq:def_psi}.  Since $\frac{1}{\lambda} \Exp{ \log(\boldsymbol \cdot )}$ is a strictly increasing function,  any strategy  that minimizes $\psi_0$ will also minimize $\tilde J^n_T$. Therefore, we write the  HJB equation for the cost function~\eqref{eq:def_psi} as follows:
\begin{align}\label{eq:HJB}
&-\frac{\partial \psi_t(\mathbf y_t)}{\partial t}= \inf_{\mathbf v_t} \Big[ 
\lambda \psi_t(\mathbf y_t) (\mathbf L_t + \sum_{s=1}^S  \frac{\mu(s)}{n(s)} \sum_{i=1}^{n(s)} \Delta L^i_t) +\sum_{s=1}^S  \sum_{i=1}^{n(s)} \nabla_{\Delta x^i_t} \psi_t(\mathbf y_t)^\intercal (A_t(s) \Delta x^i_t+B_t(s) \Delta u^i_t) \nonumber \\
&+ \nabla_{\bar{\mathbf x}_t} \psi_t(\mathbf y_t)^\intercal (\bar{\mathbf A}_t \bar{\mathbf x}_t + \bar{\mathbf B}_t \bar{\mathbf u}_t) + \frac{1}{2}\TR\Big( \sum_{s=1}^S \Big\{ \sum_{i=1}^{n(s)} \sum_{i'=1}^{n(s)} \nabla_{\Delta x^i_t \Delta x^{i'}_t} \psi_t(\mathbf y_t)  \Sigma^{w}_t(s) (\frac{d}{dt}(\Exp{(\Delta w^i_t)^\intercal \Delta w^{i'}_t})) \nonumber \\
&+2\sum_{i=1}^{n(s)} \sum_{j=1}^{f(s)} \nabla_{\Delta x^i_t \bar x^j_t(s)} \psi_t(\mathbf y_t) \Sigma^{w}_t(s)(\frac{d}{dt}(\Exp{(\Delta w^i_t)^\intercal \bar w^j_t(s)}))
 +\sum_{j=1}^{f(s)}  \nabla_{\bar x^j_t(s) \bar x^j_t(s)} \psi_t(\mathbf y_t) \Sigma^{w}_t(s) (\frac{d}{dt}(\Exp{(\bar w_t^j(s))^\intercal \bar w_t^j(s)})))\Big\}\Big) \Big],
\end{align}
where  the cross terms associated with  $\Exp{(\bar w_t^{j})^\intercal \bar w_t^{j'}}=0$, $j \in \mathbb{N}_{f(s)}, j' \in \mathbb{N}_{f(s')}$, $s,s' \in  \mathbb{N}_S$  do not appear in~\eqref{eq:HJB}.

We propose the following ansatz at any time $t \in [0,T]$:
\begin{equation}\label{eq:ansatz_value}
\begin{cases}
\psi_t(\mathbf y_t)=:\bar \psi_t(\bar{\mathbf x}_t) \prod_{s=1}^S\prod_{i=1}^{n(s)}  \psi^i_t(s)(\Delta x^i_t),  \\
\bar \psi_t(\bar{\mathbf x}_t)=: \boldsymbol   \eta_t e^{\lambda(\Norm{\bar{\mathbf x}_t}{\bar{\mathbf P}_t} + \mathbf h_t)},  \\
\psi^i_t(s)(\Delta x^i_t)=: \eta_t(s) e^{\lambda (\Norm{\Delta x^i_t}{P_t(s)}+ h^i_t)}, 
\end{cases}
\end{equation}
where  $\eta_t(s)$,  $\boldsymbol  \eta_t$, $\{\{h^i_t\}_{i=1}^{n(s)}\}_{s=1}^S$ and    $\mathbf h_t$ are scalars, and  $P_t(s)$  and $\bar{\mathbf P}_t$  are symmetric matrices with appropriate dimensions, $s \in \mathbb{N}_S$.   We  now establish a property  of the above  anastz.
\begin{Lemma}\label{lemma:p1}
The following relationship holds  for  any sub-population $s \in \mathbb{N}_S$ at any time $t \in [0,T]$,  
\begin{align}\label{eq:P1}
 \TR\big(\sum_{i=1}^{n(s)} \sum_{i'=1}^{n(s)} \nabla_{\Delta x^i_t \Delta x^{i'}_t} \psi_t(\mathbf y_t) \Sigma^{w}_t(s) \big( \frac{d}{dt} \Exp{(\Delta w^i_t)^\intercal \Delta w^{i'}_t} \big) \big)
 = \TR\big(\sum_{i=1}^{n(s)} \nabla_{\Delta x^i_t \Delta x^i_t} \psi_t(\mathbf y_t) \Sigma^{w}_t(s) \big).
\end{align}
\end{Lemma}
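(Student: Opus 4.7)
The plan is to combine the multiplicative structure of the ansatz~\eqref{eq:ansatz_value} with the correlation formulas of Lemma~\ref{lemma:correlation} and the linear dependencies induced by the gauge transformation in Lemma~\ref{lemma:linear_dep}. The key observation is that under~\eqref{eq:ansatz_value} the mixed Hessian splits into a rank-one bilinear piece (which is annihilated by the gauge constraint) plus a Kronecker-diagonal piece (which survives to produce the RHS).

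First, I would compute the mixed Hessian under the ansatz. Because $\log\psi_t(\mathbf{y}_t)$ is additively separable across the $\Delta x^i_t$ variables and the piece containing $\Delta x^i_t$ is exactly $\lambda\Norm{\Delta x^i_t}{P_t(s)}$, the standard identity $\nabla^2\psi=\psi\,\nabla\log\psi\,(\nabla\log\psi)^\intercal+\psi\,\nabla^2\log\psi$ gives
\[
\nabla_{\Delta x^i_t \Delta x^{i'}_t}\psi_t \;=\; 4\lambda^2\psi_t\,P_t(s)\Delta x^i_t(\Delta x^{i'}_t)^\intercal P_t(s) \;+\; 2\lambda\psi_t\,P_t(s)\,\ID{i=i'}.
\]
The first summand is a rank-one bilinear object attached to \emph{every} pair $(i,i')$, while the second summand is confined to the diagonal.

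Next, I would feed Lemma~\ref{lemma:correlation} into the LHS of~\eqref{eq:P1}. The correlation kernel $\frac{d}{dt}\Exp{(\Delta w^i_t)^\intercal \Delta w^{i'}_t}$ separates naturally into a $\ID{i=i'}$ contribution (which, after substitution, matches the RHS directly) and a bilinear correction proportional to $\frac{1}{n(s)}\alpha^{i,j}(s)\alpha^{i',j}(s)$ summed over $j\in\mathbb{N}_{f(s)}$. Hence the task reduces to showing that, after pairing with the Hessian and tracing against $\Sigma^w_t(s)$, this bilinear correction contributes zero.

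Substituting the rank-one part of the Hessian into the correction term, the double sum over $(i,i')$ factorizes as $\bigl(\sum_{i}\alpha^{i,j}(s)P_t(s)\Delta x^i_t\bigr)\bigl(\sum_{i'}\alpha^{i',j}(s)P_t(s)\Delta x^{i'}_t\bigr)^\intercal$, which vanishes identically by Lemma~\ref{lemma:linear_dep} because $\sum_i\alpha^{i,j}(s)\Delta x^i_t=\mathbf{0}$ for every feature direction $j\in\mathbb{N}_{f(s)}$. Collecting the surviving diagonal pieces then reproduces~\eqref{eq:P1}. The main obstacle is the algebraic bookkeeping: one must carefully pair each of the two Hessian pieces (rank-one bilinear vs.\ Kronecker-diagonal) against each of the two kernel pieces ($\ID{i=i'}$ vs.\ bilinear correction) and recognize that the rank-one/correction pairing is exactly the combination in which the orthogonality relation~\eqref{eq:orthogonal_features} together with the gauge constraint of Lemma~\ref{lemma:linear_dep} delivers the required cancellation.
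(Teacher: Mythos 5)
Your decomposition of the Hessian and of the correlation kernel mirrors the paper's strategy (an outer-product-of-gradients piece killed by Lemma~\ref{lemma:linear_dep}, and a Kronecker-delta piece that survives to give the right-hand side), but the step you yourself flag as ``algebraic bookkeeping'' is exactly where the argument breaks. You pair the bilinear kernel correction $-\frac{1}{n(s)}\sum_{j}\alpha^{i,j}(s)\alpha^{i',j}(s)$ only against the rank-one part $4\lambda^2\psi_t\,P_t(s)\Delta x^i_t(\Delta x^{i'}_t)^\intercal P_t(s)$ of your Hessian, where the factorization and Lemma~\ref{lemma:linear_dep} indeed give zero. But that correction kernel is also supported on the diagonal ($i=i'$, with weight $-\frac{1}{n(s)}\sum_j(\alpha^{i,j}(s))^2$), and there it meets the Kronecker-diagonal piece $2\lambda\psi_t P_t(s)\ID{i=i'}$ of your Hessian as well. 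That pairing contributes
\[
-\frac{2\lambda\psi_t}{n(s)}\sum_{i=1}^{n(s)}\sum_{j=1}^{f(s)}(\alpha^{i,j}(s))^2\,\TR\big(P_t(s)\Sigma^{w}_t(s)\big)=-2\lambda f(s)\,\psi_t\,\TR\big(P_t(s)\Sigma^{w}_t(s)\big)
\]
by the normalization~\eqref{eq:orthogonal_features}, and this is not zero. So with the genuine Hessian your computation produces the right-hand side of~\eqref{eq:P1} minus this residual, not~\eqref{eq:P1} itself; the assertion that ``the bilinear correction contributes zero'' is only proved for one of the two Hessian pieces.

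For comparison, the paper's own proof never introduces the Kronecker-diagonal piece: it represents $\nabla_{\Delta x^i_t\Delta x^{i'}_t}\psi_t(\mathbf y_t)$ for \emph{every} pair, including $i=i'$, by the outer product $\frac{\nabla_{\Delta x^i_t}\psi^i_t(s)\,(\nabla_{\Delta x^{i'}_t}\psi^{i'}_t(s))^\intercal}{\psi^i_t(s)\,\psi^{i'}_t(s)}\,\psi_t(\mathbf y_t)$, and in its final step reads the right-hand side of~\eqref{eq:P1} under that same convention, so the identity closes consistently on both sides. Under your (standard) reading of the second derivative the two sides of~\eqref{eq:P1} differ by the $O(f(s))$ term displayed above --- negligible next to the $O(n(s))$ contributions, but not zero, so the exact identity is not established. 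To repair the write-up you must either adopt the paper's convention explicitly on both sides of~\eqref{eq:P1}, or carry the residual term through and show where it is absorbed downstream.
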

\begin{proof}
From~\eqref{eq:ansatz_value}, it results that  for  any  sub-population $s \in \mathbb{N}_S$ at  time $t \in [0,T]$, one has: 
\begin{align*}\label{eq:P1}
 &\TR\big(\sum_{i=1}^{n(s)} \sum_{i'=1}^{n(s)} \nabla_{\Delta x^i_t \Delta x^{i'}_t} \psi_t(\mathbf y_t) \Sigma^{w}_t(s) \big( \frac{d}{dt} \Exp{(\Delta w^i_t)^\intercal \Delta w^{i'}_t} \big) \big)= \TR\big(\sum_{i=1}^{n(s)} \sum_{i'=1}^{n(s)}   \frac{\nabla_{\Delta x^i_t} \psi_t(s)(\Delta x^i_t) (\nabla_{\Delta x^{i'}_t} \psi_t(s)(\Delta x^{i'}_t))^\intercal }{\psi_t(s)(\Delta x^i_t) \psi_t(s)(\Delta x^{i'}_t)}      \psi_t(\mathbf y_t) \\
 &\times  \Sigma^{w}_t(s) ( \frac{d}{dt} \Exp{(\Delta w^i_t)^\intercal \Delta w^{i'}_t} ) \big) =\TR\big( \psi_t(\mathbf y_t)  \sum_{i=1}^{n(s)} \sum_{i'=1}^{n(s)}    \frac{\nabla_{\Delta x^i_t} \psi_t(s)(\Delta x^i_t) (\nabla_{\Delta x^{i'}_t} \psi_t(s)(\Delta x^{i'}_t))^\intercal }{\psi_t(s)(\Delta x^i_t) \psi_t(s)(\Delta x^{i'}_t)}       \Sigma^{w}_t(s)  \\
& \times \big( \frac{d}{dt} \Exp{(\Delta w^i_t)^\intercal \Delta w^{i'}_t} \big)   \big) \substack{(a)\\=}\TR \big( \psi_t(\mathbf y_t) \sum_{i=1}^{n(s)} \sum_{i'=1}^{n(s)}  \frac{\nabla_{\Delta x^i_t} \psi_t(s)(\Delta x^i_t) (\nabla_{\Delta x^{i'}_t} \psi_t(s)(\Delta x^{i'}_t))^\intercal }{\psi_t(s)(\Delta x^i_t) \psi_t(s)(\Delta x^{i'}_t)} \\
& \times (-\sum_{j=1}^{f(s)} \frac{\alpha^{i,j}(s) \alpha^{i',j}(s)}{n(s)} \Sigma^{w}_t(s)   \big) + \TR\big(  \psi_t(\mathbf y_t) \sum_{i=1}^{n(s)}    \frac{\nabla_{\Delta x^i_t} \psi_t(s)(\Delta x^i_t) (\nabla_{\Delta x^{i}_t} \psi_t(s)(\Delta x^{i}_t))^\intercal }{\psi_t(s)(\Delta x^i_t) \psi_t(s)(\Delta x^{i}_t)}    \Sigma^{w}_t(s)  \big)\substack{(b)\\=}\TR\big(  \sum_{i=1}^{n(s)}    \\
& \frac{\nabla_{\Delta x^i_t} \psi_t(s)(\Delta x^i_t) (\nabla_{\Delta x^{i}_t} \psi_t(s)(\Delta x^{i}_t))^\intercal }{\psi_t(s)(\Delta x^i_t) \psi_t(s)(\Delta x^{i}_t)}  \psi_t(\mathbf y_t)   \Sigma^{w}_t(s)  \big)\substack{(c)\\=} \TR\big(\sum_{i=1}^{n(s)} \nabla_{\Delta x^i_t \Delta x^i_t} \psi_t(\mathbf y_t) \Sigma^{w}_t(s) \big), 
\end{align*}
where $(a)$  follows from Lemmas~\ref{lemma:linear_dep} and~\ref{lemma:correlation}; (b) follows from the fact that the first term of the left-hand side is zero  due to the linear dependency in Lemma~\ref{lemma:linear_dep} and $\frac{\nabla_{\Delta x^i_t} \psi_t(s)(\Delta x^i_t)}{\psi_t(s)(\Delta x^i_t)}=2\lambda P_t(s)\Delta x^i_t$, and  $(c)$ follows from~\eqref{eq:ansatz_value}. 
\end{proof}

 \subsection{Solution of Problem~\ref{prob1}}
For any time $t \in [0,T)$, define the following backward equations:
\begin{equation}\label{eq:Riccati}
\begin{cases}
- \dot P_t(s)=Q_t(s) + P_t(s) A_t(s) + {A_t(s)}^\intercal P_t(s)  -P_t(s)  \big(B_t(s) (R_t(s))^{-1} {B_t(s)}^\intercal 
- 2 \lambda \frac{\mu(s)}{n(s)} \Sigma^{w}_t(s) \big) P_t(s), \quad s \in \mathbb{N}_S,\\
- \dot  {\bar{\mathbf P}}_t=\bar{\mathbf Q}_t + \bar{\mathbf P}_t \bar{\mathbf A}_t + {\bar{\mathbf A}_t}^\intercal \bar{\mathbf P}_t 
  -\bar{\mathbf P}_t\big(\bar{\mathbf B}_t {\bar{\mathbf R}_t}^{-1} {\bar{\mathbf B}_t}^\intercal- 2 \lambda {\bar{\boldsymbol \Sigma}}^w_t \big) \bar{\mathbf P}_t, 
\end{cases}
\end{equation} 
with boundary conditions $P_T(s)=Q_T(s)$ and $\bar{\mathbf P}_T=\bar{\mathbf Q}_T$. 

In the sequel, we refer to~\eqref{eq:Riccati} as (risk-sensitive) \emph{deep}  Riccati equation (DRE),   which identifies a scalable solution for the large-scale optimization problems described in Subsection~\ref{sec:problem_statement}.  It is to be noted that the   dimensions of the matrices in DRE  do not depend on the number of agents $n(s)$, $\forall s \in \mathbb{N}_S$.  Interestingly,  DRE  in  team setting consists of $S$ local Riccati equations and one global Riccati equation whereas such decomposition  does not generally  hold for  the   game setting~\cite{Jalal2019Automatica}. For  the special case of weakly coupled agents in Definition~\ref{def:weakly}, DRE decomposes further into $S+\sum_{s=1}^{f(s)}$ smaller equations, where $\bar{\mathbf P}_t=\DIAG(\mu(s)\DIAG(\bar P^j_t(s))_{j=1}^{f(s)})_{s=1}^S$  such that  for every $j \in \mathbb{N}_{f(s)}$ and  $s \in \mathbb{N}_S$,
\begin{equation}\label{eq:Riccati_weakly}
\begin{cases}
- \dot{\bar P}^j_t(s)=Q_t(s) + \bar Q^j_t(s)+ \bar P^j_t(s) (A_t(s)+\bar A^j_t(s)) 
+ {(A_t(s)+\bar A^j_t(s))}^\intercal \bar P^j_t(s)  -\bar P^j_t(s)  \big((B_t(s)+\bar B^j_t(s))  (R_t(s)\\
\qquad +\bar R^j_t(s))^{-1} {(B_t(s)+\bar B^j_t(s))}^\intercal 
- 2 \lambda \frac{\mu(s)}{n(s)} \Sigma^{w}_t(s) \big) \bar P^j_t(s).
\end{cases}
\end{equation} 
 \begin{Remark}[Computational complexity]
\emph{The space complexity of DRE is independent of the number of agents $n(s)$, $s \in \mathbb{N}_S$,  and   is  quadratic  with respect to the number of features and sub-populations,  i.e. $\mathcal{O}((\sum_{s=1}^S f(s))^2)$. For the special case of  weakly coupled agents,  the complexity reduces to  linear order with respect to the number of features and sub-populations, i.e. $\mathcal{O}((\sum_{s=1}^S f(s))$. When $f(s) \ll n(s)$, the above complexity reductions are significant. Notice that  it is always  possible to consider a larger number of features and sub-populations in order  to model more complex  couplings; however,  this is a trade-off between  under-fitting  and  over-fitting  of a model  and between   cheap  and  expensive  (numerical)  implementation of an algorithm.}
\end{Remark}

For any agent $i \in \mathbb{N}_{n(s)}$,  sub-population $s \in \mathbb{N}_S$ and time $t \in [0,T)$, define the followings from the solution of DRE:
\begin{equation}\label{eq:gains}
\begin{cases}
 - \dot \xi^i_t:=\big( A_t(s) - B_t(s) {R_t(s)}^{-1} {B_t(s)}^\intercal P_t(s) \\
 +  2 \lambda \frac{\mu(s)}{n(s)} \Sigma^{w}_t(s) P_t(s) \big)^\intercal \xi^i_t 
 -Q_t(s) (r^i_t -\sum_{j=1}^{f(s)} \alpha^{i,j}(s) \bar r^j_t(s)),\\
  - \dot{\boldsymbol \xi}_t:=\big( \bar{\mathbf A}_t - \bar{\mathbf B}_t {\bar{\mathbf R}_t}^{-1} \bar{\mathbf B}_t^\intercal \bar{\mathbf P}_t +  2 \lambda {\bar{\boldsymbol \Sigma}}^w_t \bar{\mathbf  P}_t \big)^\intercal \boldsymbol \xi_t \\
 - \DIAG(\mu(1)\bar{\mathbf Q}_t(1),\ldots,\mu(S)\bar{\mathbf Q}_t(S))\bar{\mathbf r}_t,\\
  \theta^n_t(s):=- (R_t(s))^{-1} (B_t(s))^\intercal P_t(s),\\
\rho^{n,i}_t:=-(R_t(s))^{-1} (B_t(s))^\intercal  \xi_t^i,\\
\ROW(
\bar{\boldsymbol \theta}^n_t(1), \ldots,\bar{\boldsymbol \theta}^n_t(S)):=\bar{\boldsymbol \theta}^n_t=-\bar{\mathbf R}_t^{-1} \bar{ \mathbf B}_t^\intercal \bar{\mathbf P}_t,\\
\VEC(\bar{\boldsymbol \rho}^n_t(1), \ldots,\bar{\boldsymbol \rho}^n_t(S)):=\bar{{\boldsymbol \rho}}^n_t=-\bar{\mathbf R}_t^{-1} \bar{ \mathbf B}_t^\intercal \boldsymbol \xi_t,\\
\ROW(\bar{\boldsymbol \theta}^{n,1}_t(s),\ldots,\bar{\boldsymbol \theta}^{n,f(s)}_t(s)):=\bar{\boldsymbol \theta}^n_t(s),\\
\VEC(\bar{\boldsymbol \rho}^{n,1}_t(s),\ldots,\bar{\boldsymbol \rho}^{n,f(s)}_t(s)):=\bar{{\boldsymbol \rho}}^n_t(s),
\end{cases}
\end{equation}
where  $\xi^i_T=Q_T(s) (r^i_T -\sum_{j=1}^{f(s)} \alpha^{i,j}(s) \bar r^T_t(s))$ and  $\boldsymbol \xi_T:= \DIAG(\mu(1)\bar{\mathbf Q}_T(1),\ldots,\mu(S)\bar{\mathbf Q}_T(S))\bar{\mathbf r}_T$. 

\begin{Theorem}\label{thm:mfs}
Let Assumption~\ref{assumption1}   hold. The optimal solution of Problem~\ref{prob1} is described  as follows. For  any agent $i \in \mathbb{N}_{n(s)}$ in  sub-population $s \in \mathbb{N}_S$ at any time $t \in [0,T]$,
\begin{align}\label{eq:DSS_strategy}
u^{i,\ast}_t&= \theta^n_t(s) x^i_t - \sum_{j=1}^{f(s)} \alpha^{i,j}(s) \theta^n_t(s) \bar x^j_t(s)  +\rho^{n,i}_t + \sum_{j=1}^{f(s)} \alpha^{i,j}(s) \bar{\boldsymbol \theta}^{n,j}_t(s) \bar{\mathbf x}_t +\sum_{j=1}^{f(s)} \alpha^{i,j}(s) \bar{\boldsymbol \rho}^{n,j}_t(s),
\end{align}
where the above gains and corrections terms are  given by~\eqref{eq:gains}.
\end{Theorem}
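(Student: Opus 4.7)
The plan is to verify that the factorized ansatz in (eq:ansatz_value) satisfies the HJB equation (eq:HJB), from which standard risk-sensitive verification gives a globally optimal centralized control law; I would then observe that this centralized law depends on the data only through the local state $x^i_t$ and the joint deep state $\bar{\mathbf x}_t$, hence is implementable under DSS. The gauge transformation (eq:gauge) is what makes the ansatz natural: it splits the joint state into the $n(s)$ deviations $\Delta x^i_t$ in each sub-population plus the deep state $\bar{\mathbf x}_t$, and Lemmas (lemma:linear_dep)--(lemma:correlation) ensure that these blocks are (in the second-moment sense) decoupled.

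First I would substitute (eq:ansatz_value) into (eq:HJB). The gradients are $\nabla_{\Delta x^i_t}\psi_t = 2\lambda\,\psi_t\, P_t(s)\Delta x^i_t$ (up to linear terms absorbed into $\xi^i_t$ and scalar terms absorbed into $h^i_t$) and $\nabla_{\bar{\mathbf x}_t}\psi_t = 2\lambda\,\psi_t\,\bar{\mathbf P}_t\bar{\mathbf x}_t$ (plus terms absorbed into $\boldsymbol\xi_t$ and $\mathbf h_t$). The diffusion block is where Lemma (lemma:p1) is crucial: it collapses the double sum $\sum_{i,i'}\nabla_{\Delta x^i_t \Delta x^{i'}_t}\psi_t\,\Sigma^w_t(s)\,\tfrac{d}{dt}\mathbb{E}[(\Delta w^i_t)^\intercal \Delta w^{i'}_t]$ into the single sum $\sum_i \nabla_{\Delta x^i_t\Delta x^i_t}\psi_t\,\Sigma^w_t(s)$, which after inserting the ansatz produces precisely the $2\lambda\tfrac{\mu(s)}{n(s)}\Sigma^w_t(s)$ risk-correction appearing in the local DRE of (eq:Riccati). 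The cross-terms $\mathbb{E}[(\Delta w^i_t)^\intercal \bar w^j_t(s)] = 0$ from Lemma (lemma:correlation) wipe out the mixed Hessian, and the variance $\tfrac{1}{n(s)}\mathbf I$ of $\bar w^j_t(s)$ generates the $2\lambda\bar{\boldsymbol\Sigma}^w_t$ correction in the global DRE.

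Next, I would take the infimum over $\mathbf v_t$. Using Lemma (lemma:orthogonal), the mixed terms coupling $\Delta u^i_t$ with $\bar u^j_t(s)$ in the stage cost $\mathbf L_t + \sum_{s,i}\tfrac{\mu(s)}{n(s)}\Delta L^i_t$ vanish, and similarly the linear drift in (eq:HJB) splits as $\mathrm{DIAG}(A_t(s),\bar{\mathbf A}_t)$ times the augmented state by construction of (eq:dynamics_mean-field_1); therefore the inner problem decouples into independent quadratic minimizations in each $\Delta u^i_t$ and in $\bar{\mathbf u}_t$. Assumption (assumption1).(b) makes the Hessians of these two inner quadratics positive definite, so the minimizers are unique and affine, giving $\Delta u^{i,\ast}_t = \theta^n_t(s)\Delta x^i_t + \rho^{n,i}_t$ and $\bar{\mathbf u}^\ast_t = \bar{\boldsymbol\theta}^n_t\bar{\mathbf x}_t + \bar{\boldsymbol\rho}^n_t$ as in (eq:gains). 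Substituting back and matching coefficients of $\Delta x^i_t(\Delta x^i_t)^\intercal$, $\bar{\mathbf x}_t\bar{\mathbf x}_t^\intercal$, $\Delta x^i_t$, $\bar{\mathbf x}_t$, and the constant term (with the terminal data read off from (eq:cost1) and the ansatz at $t=T$) yields the DRE (eq:Riccati), the affine ODEs (eq:gains) for $\xi^i_t,\boldsymbol\xi_t$, and scalar ODEs for $\eta_t(s),\boldsymbol\eta_t,h^i_t,\mathbf h_t$. Finally, reversing the gauge via $u^{i,\ast}_t = \Delta u^{i,\ast}_t + \sum_{j=1}^{f(s)}\alpha^{i,j}(s)\bar u^{j,\ast}_t(s)$ and $\Delta x^i_t = x^i_t - \sum_j\alpha^{i,j}(s)\bar x^j_t(s)$ produces exactly the feedback (eq:DSS_strategy), which uses only $(x^i_t,\bar{\mathbf x}_t)$ and is therefore DSS-admissible.

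The main obstacle is the minimization step, because the controls $\Delta u^i_t$ and $\bar u^j_t(s)$ are not a priori independent variables but are linked by $u^i_t = \Delta u^i_t + \sum_j\alpha^{i,j}(s)\bar u^j_t(s)$, and one must be sure that optimizing them independently over their affine subspaces reproduces the original constrained infimum. The orthogonality in Lemma (lemma:orthogonal), together with the linear dependencies $\sum_i\alpha^{i,j}(s)\Delta u^i_t = \mathbf 0$ from Lemma (lemma:linear_dep), is precisely what makes this legal: the quadratic form in $\mathbf v_t$ is block-diagonal across the $(\Delta u^i_t)$-block and the $\bar{\mathbf u}_t$-block, so unconstrained minimization over the two blocks produces a point that automatically satisfies the linear dependency and hence corresponds to a genuine choice of $\mathbf u_t$. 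Beyond this, existence and regularity of the DRE solution on $[0,T]$ (needed to justify the smoothness of the ansatz and to close the verification argument) follows from Assumption (assumption1) by standard risk-sensitive Riccati theory, and the passage from minimization of $\psi_0$ to minimization of $\tilde J^n_T$ uses the monotonicity of $\tfrac{1}{\lambda}\log(\cdot)$ already remarked after (eq:def_psi).
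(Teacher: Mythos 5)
Your proposal is correct and follows essentially the same route as the paper: substitute the factorized ansatz~\eqref{eq:ansatz_value} into the HJB equation~\eqref{eq:HJB}, invoke Lemmas~\ref{lemma:linear_dep}--\ref{lemma:p1} to eliminate the cross diffusion terms and decouple the equation into $n$ local risk-sensitive LQ problems in $\Delta x^i_t$ plus one global one in $\bar{\mathbf x}_t$, and then undo the gauge transformation to recover~\eqref{eq:DSS_strategy}, which is implementable under DSS. Your explicit verification that the unconstrained block minimizers automatically satisfy the linear dependency $\sum_{i}\alpha^{i,j}(s)\Delta u^i_t=\mathbf 0$ (so that optimizing the two blocks separately reproduces the constrained infimum over $\mathbf u_t$) makes precise a step the paper's proof leaves implicit.
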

\begin{proof}
From Lemma~\ref{lemma:correlation}, it is observed that for  any $i \in \mathbb{N}_{n(s)}$ $j \in \mathbb{N}_{f(s)}$,  $s \in \mathbb{N}_S$ and  $t \in [0,T]$,  $ \Exp{(\Delta w^i_t)^\intercal \bar w_t^j(s)}=  \Exp{ (\bar w_t^j(s))^\intercal \Delta w^i_t}=~0$. Therefore, the following holds:
\begin{equation}\label{eq:p2}
\Compress
 \TR\big(\sum_{i=1}^{n(s)} \sum_{j=1}^{f(s)}  \nabla_{\Delta x^i_t \bar x_t^j(s)} \psi_t(\mathbf y_t) \Sigma^{w}_t(s) \big(\frac{d}{dt} \Exp{(\Delta w^i_t)^\intercal \bar w_t^j(s)} \big)\big)=0.
\end{equation}
According to Lemma~\ref{lemma:p1} and equation~\eqref{eq:p2}, the HJB equation~\eqref{eq:HJB}  can be  decomposed into smaller equations as follows.  For any agent $i \in \mathbb{N}_{n(s)}$ in sub-population $s \in \mathbb{N}_S$, there is an HJB equation similar to   a linear exponential quadratic problem with the state $\Delta x^i_t$, action $\Delta u^i_t$, the dynamics characterized by matrices  $A_t(s)$ and $B_t(s)$,  a zero-mean noise with  the covariance matrix $ (\mu(s)/n(s)) \Sigma^{w}_t(s)$, and a  tracking signal $\Delta r^i_t$,  where the optimal action is given by:
% \begin{equation}\label{eq:delta_u}
 $\Delta u^{i,\ast}_t= -(R_t(s))^{-1} {B_t(s)}^\intercal P_t(s) \Delta x^i_t - (R_t(s))^{-1} (B_t(s))^\intercal \xi^i_t$,
% \end{equation}
 where $P_t(s)$ and $\xi^i_t$ are  presented in~\eqref{eq:Riccati} and~\eqref{eq:gains}, respectively.  In addition, the corresponding parameters of the ansatz~\eqref{eq:ansatz_value}  can be computed as: 
$
  - \dot \eta_t(s) = \lambda \frac{\mu(s)}{n(s)} \eta_t(s) \TR(P_t(s) \Sigma_t^{w}(s) )$ and 
$h^i_t=\int_{\tau=t}^T( \Norm{\Delta r^i_\tau}{Q_\tau(s)} -  \| \xi^i_\tau\|_{\tilde Q_\tau(s)}) d\tau$,
where  $\eta_T(s)=1$ and $\tilde Q_\tau(s):=B_\tau(s) (R_\tau(s))^{-1}
{B_\tau(s)}^\intercal 
- 2\lambda \frac{\mu(s)}{n(s)} \Sigma^{w}_\tau(s)$. Furthermore, there is a global  HJB equation similar to  a    linear exponential quadratic problem with  state $\bar{\mathbf x_t}$, action $\bar{\mathbf u}_t$, the  dynamics  characterized  by matrices $\bar{\mathbf A}_t$ and $\bar{\mathbf B}_t$,  a zero-mean noise with the covariance matrix $\bar{\boldsymbol  \Sigma}^w_t$, and a tracking signal $\bar {\mathbf r}_t$,  where the optimal action is expressed by:
%\begin{equation}\label{eq:action_bar}
$\bar{\mathbf u}^\ast_t= - \bar{\mathbf R}^{-1}_t \bar{\mathbf B}_t^\intercal \bar{\mathbf P}_t \bar{\mathbf x}_t - \bar{\mathbf R}_t^{-1} \bar{\mathbf B}_t^\intercal \boldsymbol \xi_t$,
%\end{equation}
where  $\bar{\mathbf P}_t$ and $\boldsymbol \xi^i_t$ are  given  by~\eqref{eq:Riccati} and~\eqref{eq:gains}, respectively. Also,   the corresponding parameters of the ansatz~\eqref{eq:ansatz_value}  can be described as: 
$
  - \dot {\boldsymbol \eta}_t = \lambda \boldsymbol \eta_t \TR(\bar{\mathbf P}_t {\bar{\boldsymbol \Sigma}}^w_t)$,
 and $\mathbf h_t=  \int_{\tau=t}^T (\Norm{\bar {\mathbf r}_\tau}{\DIAG(\mu(1)\bar{\mathbf Q}_\tau(1),\ldots,\mu(S)\bar{\mathbf Q}_\tau(S))}  -\Norm{\boldsymbol \xi_\tau}{\tilde{\mathbf Q}_\tau})  d\tau$,
where   $\boldsymbol \eta_T=1$ and  $\tilde{\mathbf Q}_\tau:=\bar{\mathbf B}_\tau {\bar{\mathbf R}_\tau}^{-1}
\bar{\mathbf B}_\tau^\intercal-2 \lambda \bar{\boldsymbol \Sigma}^w_\tau$.

Finally, we describe the solution in terms of  the original variables according to~\eqref{eq:gauge}, i.e., the optimal solution of  agent $i$ in sub-population $s$ at time $t$ can be expressed as: $u^{i,\ast}_t= \Delta u^{i,\ast}_t+ \sum_{j=1}^{f(s)} \alpha^{i,j}(s) \bar u^{j,\ast}_t(s)$. Since the optimal  centralized strategy is implementable under DSS, it  means that it  is also the optimal DSS strategy. 
  \end{proof}

Prior to the operation of the system,   any  agent  $i \in \mathbb{N}_{n(s)}$ in sub-population $s \in \mathbb{N}_S$  can independently  solve two Riccati equations: a local  one (i.e. $P_t(s)$) and a  global one (i.e. $\bar{\mathbf P}_t$) for any time $t \in [0,T]$. During the control process,  any agent $i$ in sub-population $s$ coordinates itself within its sub-population at any  time $t \in [0,T)$ based on several factors: (a) the solution of local Riccati equation $P_t(s)$; (b)  local (private) information $\{x^i_t, r^i_t,\{\alpha^{i,j}(s)\}_{j=1}^{f(s)}\}$, and (c) global (public) information $\{\bar{\mathbf x}_t(s), \bar{\mathbf  r}_t(s), \lambda,  \Sigma^{w}_t(s), \mu(s), n(s)\}$.  At the same time,  agent $i$  coordinates itself within  the population based on other factors: (d) the solution of global Riccati equation~$\bar{\mathbf P}_t$ and (e) public information  $\{\bar{\mathbf x}_t, \bar{\mathbf  r}_t,\{\mu(s),\Sigma^{w}_t(s),n(s)\}_{s=1}^S\}$.  It is to be noted  that the only piece of  information that  needs to be   shared  at any time $t$ is  the joint deep state $\bar{\mathbf x}_t$, whose size is independent of the number of agents in each sub-population. 

\begin{Corollary}
Suppose the agents are weakly coupled according to Definition~\ref{def:weakly}.
\begin{itemize}
\item There is no gain of optimality in  distributing   the deep states of other sub-populations among agents. In particular, at any time $t \in [0,T)$,  every agent    needs only to know  its local state and  deep states of its sub-population, where  $
u^{i,\ast}_t= \theta^n_t(s) x^i_t + \sum_{j=1}^{f(s)} \alpha^{i,j}(s) (\theta^n_t(s)-\bar{\boldsymbol{\theta}}^{n,j}_t(s)) \bar x^j_t(s)+ \rho^{n,i}_t+ \sum_{j=1}^{f(s)} \alpha^{i,j}(s) \bar{\boldsymbol \rho}^{n,j}_t(s)
$, $\forall i \in \mathbb{N}_{n(s)}, \forall s \in \mathbb{N}_S$.

\item For the special case of  risk-neural cost function, where $\lambda \rightarrow 0$,  the optimal solution~\eqref{eq:DSS_strategy} is independent of macroscopic influence factors $\mu(s)$, $ s \in \mathbb{N}_S$.

\item The second condition in Assumption~\ref{assumption1}.(b) simplifies to   $(B_t(s)+\bar{B}^j_t(s)) (\bar R_t(s)+\bar{ R}^j_t(s))^{-1} (B_t(s)+\bar B^j_t(s))^\intercal - 2\lambda \frac{\mu(s)}{n(s)} \Sigma^w_t(s)$, $j \in \mathbb{N}_{f(s)},s \in \mathbb{N}_S$, being positive definite. 

\end{itemize} 
\end{Corollary}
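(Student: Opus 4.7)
The plan is to exploit the block-diagonal structure that weak coupling induces on all the ``bar'' matrices in \eqref{eq:augmented_variables}, and to read off each bullet from a direct substitution into \eqref{eq:DSS_strategy} and \eqref{eq:gains}. First I would verify from Definition~\ref{def:weakly} that under weak coupling the rows $\bar A^j_t(s)$ and $\bar B^j_t(s)$ affect only the $j$-th deep state/action of sub-population $s$; consequently, $\bar{\mathbf A}_t$, $\bar{\mathbf B}_t$, $\bar{\mathbf Q}_t$ and $\bar{\mathbf R}_t$ in \eqref{eq:augmented_variables} are block diagonal in the joint index $(s,j)$, so that the global Riccati equation in \eqref{eq:Riccati} decouples into the $\sum_s f(s)$ smaller equations \eqref{eq:Riccati_weakly}, giving $\bar{\mathbf P}_t=\DIAG(\mu(s)\DIAG(\bar P^j_t(s))_{j=1}^{f(s)})_{s=1}^{S}$.

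For the first bullet, since $\bar{\boldsymbol\theta}^{n}_t=-\bar{\mathbf R}_t^{-1}\bar{\mathbf B}_t^\intercal\bar{\mathbf P}_t$ inherits this block-diagonal structure, the block $\bar{\boldsymbol\theta}^{n,j}_t(s)$ multiplies only the component $\bar x^j_t(s)$ of $\bar{\mathbf x}_t$. Hence in \eqref{eq:DSS_strategy} the term $\sum_j \alpha^{i,j}(s)\bar{\boldsymbol\theta}^{n,j}_t(s)\bar{\mathbf x}_t$ collapses to $\sum_j \alpha^{i,j}(s)\bar{\boldsymbol\theta}^{n,j}_t(s)\bar x^j_t(s)$, and an identical collapse applies to the $\bar{\boldsymbol\rho}^{n,j}_t(s)$ correction. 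Combining with the local $\theta^n_t(s)x^i_t - \sum_j \alpha^{i,j}(s)\theta^n_t(s)\bar x^j_t(s)$ terms and grouping coefficients of $\bar x^j_t(s)$ yields the stated formula; in particular, the deep states of the other sub-populations disappear from the control law, proving that no optimality is lost by withholding them.

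For the second bullet, taking $\lambda\to 0$ removes the $-2\lambda\frac{\mu(s)}{n(s)}\Sigma^w_t(s)$ term in the local Riccati equation \eqref{eq:Riccati}, so $P_t(s)$ and the gain $\theta^n_t(s)$ depend on $\mu(s)$ no further. For the global part, I would posit $\bar{\mathbf P}_t=\DIAG(\mu(s)\tilde P^{(s)}_t)_{s=1}^{S}$ and substitute into \eqref{eq:Riccati} (with the $\lambda$-term dropped): since in the weakly coupled case the $s$-block of $\bar{\mathbf Q}_t$ and $\bar{\mathbf R}_t$ carries the prefactor $\mu(s)$ while that of $\bar{\mathbf A}_t,\bar{\mathbf B}_t$ does not, the resulting equation for $\tilde P^{(s)}_t$ is independent of $\mu(s)$. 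The gain $\bar{\boldsymbol\theta}^{n,j}_t(s)$ then picks up $1/\mu(s)$ from $\bar{\mathbf R}_t^{-1}$ and $\mu(s)$ from $\bar{\mathbf P}_t$, cancelling the macroscopic influence factor. A parallel calculation using the definition of $\boldsymbol\xi_t$ in \eqref{eq:gains} removes $\mu(s)$ from $\bar{\boldsymbol\rho}^{n,j}_t(s)$. For the third bullet, the same block-diagonal computation shows that the $(s,j)$-block of $\bar{\mathbf B}_t\bar{\mathbf R}_t^{-1}\bar{\mathbf B}_t^\intercal-2\lambda\bar{\boldsymbol\Sigma}^w_t$ equals $\frac{1}{\mu(s)}(B_t(s)+\bar B^j_t(s))(R_t(s)+\bar R^j_t(s))^{-1}(B_t(s)+\bar B^j_t(s))^\intercal-\frac{2\lambda}{n(s)}\Sigma^w_t(s)$; multiplying by the positive scalar $\mu(s)$ preserves positive definiteness and gives exactly the stated simplified condition.

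The main obstacle, and the only step requiring genuine care, is verifying the block-diagonal structure of $\bar{\mathbf Q}_t$ and $\bar{\mathbf R}_t$ under weak coupling: their definitions in \eqref{eq:augmented_variables} mix a $\DIAG$-part with an additive $\sum_s\mu(s)\bar Q_t(s)$-type term, and one must check that in the weakly coupled regime the additive term itself respects the $(s,j)$ block decomposition (so that $\bar{\mathbf P}_t$ is truly block diagonal at the feature level). Once that bookkeeping is in place, all three bullets follow by substitution without any further analytical ingredient beyond Theorem~\ref{thm:mfs}.
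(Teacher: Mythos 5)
Your proposal is correct and follows essentially the same route the paper takes: the paper proves nothing beyond asserting the weak-coupling decomposition $\bar{\mathbf P}_t=\DIAG(\mu(s)\DIAG(\bar P^j_t(s))_{j=1}^{f(s)})_{s=1}^S$ in~\eqref{eq:Riccati_weakly}, and the corollary is meant to follow by exactly the block-diagonal substitution into~\eqref{eq:gains} and~\eqref{eq:DSS_strategy} that you carry out, including the $\mu(s)$ cancellation between $\bar{\mathbf R}_t^{-1}$ and $\bar{\mathbf P}_t$ (or $\boldsymbol\xi_t$) and the factoring of $1/\mu(s)$ out of each block of $\bar{\mathbf B}_t\bar{\mathbf R}_t^{-1}\bar{\mathbf B}_t^\intercal-2\lambda\bar{\boldsymbol\Sigma}^w_t$. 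Your bookkeeping of the $(s,j)$-block structure of $\bar{\mathbf Q}_t$ and $\bar{\mathbf R}_t$ is consistent with Definition~\ref{def:weakly}, so no gap remains.
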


A  salient property of the risk-neutral case is the fact that certainty equivalence theorem holds.  In particular, when the risk factor  in Theorem~\ref{thm:mfs} is set to zero,  the solution of Theorem~\ref{thm:mfs}   reduces to the solution of  the  risk-neutral problem, where the optimal strategy is independent of  the  number of agents in each sub-population $n(s)$, $s \in \mathbb{N}_S$, as well as the probability distribution of  driving noises (e.g.,  correlated and non-Gaussian) which is not the case for the risk-sensitive optimal strategy in Theorem~\ref{thm:mfs}. To  further emphasize the  complexity  of the risk-sensitive case compared to  the risk-neutral one,   consider a case wherein the  local noises are correlated. In this case,  key properties  described in Lemma~\ref{lemma:p1} and equation~\eqref{eq:p2}  do not necessarily  hold,  and hence  the decomposition proposed in Theorem~\ref{thm:mfs}  will not  hold either.

\subsection{Risk-neutral case}  
%\cite{arabneydi2016new,JalalCCECE2018,JalalCDC2018,JalalCDC2015,Jalal2017linear}

  In this subsection, we  show that the risk-sensitive solution converges to  the risk-neutral one as the number of agents increases. In addition, we introduce  (extra) optimization factors for the risk-neutral cost function.

\subsubsection{Price of robustness}The following assumption is imposed to ensure that the state dynamics and cost  function remain bounded as the number of agents goes to infinity.
\begin{Assumption}\label{assumption3}
All matrices in the dynamics~\eqref{eq:dynamics1} and   cost functions~\eqref{eq:cost1},  including covariance matrices of initial states and local noises, are independent of $n(s)$, $\forall s \in \mathbb{N}_S$.
\end{Assumption}

\begin{Definition}[Price of Robustness]
The price of robustness (PoR) is defined  to quantify the loss of performance by taking the robustness into account, i.e.
%\begin{equation}
$PoR(\lambda):=\tilde J^{n,\lambda}(\mathbf g^\ast) -\lim_{\lambda' \rightarrow 0} \tilde J^{n,\lambda'}(\mathbf g^\ast)$,
%\end{equation}
where superscripts $\lambda$ and $\lambda'$ refer to the dependency of~\eqref{eq:total_cost_risk} to the risk parameter.
\end{Definition}

\begin{Corollary}\label{cor:risk}
Let Assumptions~\ref{assumption1} and \ref{assumption3} hold.  From Theorem~\ref{thm:mfs} and equations~\eqref{eq:Riccati} and~\eqref{eq:gains}, it follows  that the price of robustness converges to zero as $n(s)\rightarrow \infty$, $\forall s \in \mathbb{N}_S$. 
\end{Corollary}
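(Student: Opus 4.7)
The plan is to trace through the $n$-dependence in the deep Riccati equation \eqref{eq:Riccati} and the cost formula from Theorem~\ref{thm:mfs}, and show that the entire risk contribution vanishes as $n(s)\to\infty$ for all $s\in\mathbb{N}_S$. The key structural observation is that every risk-sensitive correction in \eqref{eq:Riccati} is scaled by $1/n(s)$: the local Riccati contains the term $-2\lambda\frac{\mu(s)}{n(s)}\Sigma^w_t(s)$, and by construction \eqref{eq:augmented_variables} the matrix in the global Riccati satisfies $\bar{\boldsymbol\Sigma}^w_t=\DIAG\big(\tfrac{1}{n(1)}\bar{\boldsymbol\Sigma}^w_t(1),\ldots,\tfrac{1}{n(S)}\bar{\boldsymbol\Sigma}^w_t(S)\big)$. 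Under Assumption~\ref{assumption3} the remaining data in \eqref{eq:Riccati} is independent of $n(\cdot)$, so as $\min_s n(s)\to\infty$ the risk perturbations vanish and the right-hand sides of both Riccati ODEs converge (uniformly on $[0,T]$) to those of the corresponding risk-neutral Riccati equations.

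Next I would invoke continuous dependence of solutions of the Riccati ODE on its coefficients: since the perturbations are of size $O(1/\min_s n(s))$ and the nominal risk-neutral equations admit bounded, symmetric positive semi-definite solutions on $[0,T]$ by Assumption~\ref{assumption1}.(a), a standard Gr\"onwall argument yields $\|P^{\lambda}_t(s)-P^{0}_t(s)\|+\|\bar{\mathbf P}^{\lambda}_t-\bar{\mathbf P}^{0}_t\|=O(1/\min_s n(s))$ uniformly in $t\in[0,T]$. The same rate propagates to the tracking adjoints $\xi^i_t,\boldsymbol\xi_t$ in \eqref{eq:gains} (linear ODEs with coefficients built from the Riccati matrices), and hence to the feedback gains and correction terms defining $\mathbf g^\ast$.

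Finally, I would evaluate the optimal risk-sensitive cost using the closed-form ansatz~\eqref{eq:ansatz_value}: from $\tilde J^{n,\lambda}(\mathbf g^\ast)=\tfrac1\lambda\,\Exp{\log\psi_0(\mathbf y_0)}$ one obtains an explicit expression of the form $\Exp{\|\bar{\mathbf x}_0\|_{\bar{\mathbf P}_0}+\sum_{s,i}\|\Delta x^i_0\|_{P_0(s)}} + \mathbf h_0+\sum_{s,i}h^i_0 + \tfrac1\lambda(\log\boldsymbol\eta_0+\sum_{s,i}\log\eta_0(s))$, where from the proof of Theorem~\ref{thm:mfs} the log-normalizer terms reduce to $\int_0^T\TR(\bar{\mathbf P}_\tau\bar{\boldsymbol\Sigma}^w_\tau)\,d\tau+\sum_s\int_0^T\mu(s)\TR(P_\tau(s)\Sigma^w_\tau(s))\,d\tau$, i.e.\ exactly the noise-trace terms that also appear in the risk-neutral LQG cost. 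Combining this with the Riccati/adjoint convergence from the previous step, every summand in $\tilde J^{n,\lambda}(\mathbf g^\ast)$ tends to the corresponding summand in $\lim_{\lambda'\to 0}\tilde J^{n,\lambda'}(\mathbf g^\ast)$, giving $PoR(\lambda)\to 0$ as $\min_s n(s)\to\infty$.

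The main obstacle is making the continuous-dependence step clean: one must verify that the risk-sensitive DREs remain well-posed on the full horizon $[0,T]$ with a uniform bound independent of $n(s)$ (so that Gr\"onwall can be applied), which is exactly what Assumption~\ref{assumption1}.(b) guarantees — and the remark after that assumption notes that for $n(s)$ large the positive-definiteness constraint is automatically satisfied, so the uniform bound on $[0,T]$ follows from the risk-neutral Riccati bound by a perturbation argument.
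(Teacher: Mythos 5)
Your proposal is correct and follows the same route the paper intends: the paper offers no separate proof of Corollary~\ref{cor:risk}, treating it as immediate from the fact that every risk-dependent term in \eqref{eq:Riccati} and \eqref{eq:gains} carries a $\frac{1}{n(s)}$ factor (via $2\lambda\frac{\mu(s)}{n(s)}\Sigma^w_t(s)$ and $\bar{\boldsymbol\Sigma}^w_t$) while Assumption~\ref{assumption3} fixes all other data independent of $n(s)$. Your Gr\"onwall/continuous-dependence step and the evaluation of the cost via the ansatz~\eqref{eq:ansatz_value} simply make explicit what the paper leaves implicit, and are sound.
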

%According to Corollary~\ref{cor:risk},  there is no loss of optimality in restricting attention to risk-neutral models when the number of agents is very large. It is to be noted that this relationship does not hold for game problems.

\subsubsection{Optimization factors} Let  $\beta^i(s) \in (0,\infty)$  be an optimization factor for every agent $i \in \mathbb{N}_{n(s)}$ in sub-population $s \in \mathbb{N}_S$, where the cost function~\eqref{eq:total_cost} is generalized to
\begin{equation}\label{eq:total_cost_extra}
\bar J^n_T:=\sum_{s=1}^S  \frac{\mu(s)}{n(s)} \sum_{i=1}^{n(s)} \beta^{i}(s) J^{i}_T.
\end{equation}
To simplify the presentation, without loss of generality, it is assumed that  influence and optimization factors are normalized as follows: $\frac{1}{n(s)}\sum_{i=1}^{n(s)} \frac{\alpha^{i,j}(s)}{\sqrt{\beta^i(s)}}  \frac{\alpha^{i,j'}(s)}{\sqrt{\beta^i(s)}}=\ID{j= j'}$, $j,j' \in \mathbb{N}_{f(s)}$, $s \in \mathbb{N}_S$. We modify the gauge transformation~\eqref{eq:gauge} such that
$
 \Delta x^i_t:=x^i_t - \sum_{j=1}^{f(s)} \frac{\alpha^{i,j}(s)}{\beta^i(s)} \bar x_t^j(s)$,   $\Delta u^i_t:=u^i_t - \sum_{j=1}^{f(s)} \frac{\alpha^{i,j}(s)}{\beta^i(s)} \bar u_t^j(s)$,
   $\Delta w^i_t:=w^i_t - \sum_{j=1}^{f(s)} \frac{\alpha^{i,j}(s)}{\beta^i(s)} \bar w_t^j(s)$, $
      \Delta r^i_t:=r^i_t - \sum_{j=1}^{f(s)} \frac{\alpha^{i,j}(s)}{\beta^i(s)} \bar r_t^j(s).
$

\begin{Assumption}\label{ass:decoupled}
The agents are dynamically decoupled, i.e., $\bar A^j_t(s)$ and $\bar B^j_t(s)$, $s\in \mathbb{N}_S$, $t \in [0,T]$,  are zero.
\end{Assumption}

\begin{Corollary}\label{cor:optimization}
Let Assumption~\ref{assumption1}.(a) and Assumption~\ref{ass:decoupled} hold. For  cost function~\eqref{eq:total_cost_extra}, the optimal solution is given by: 
\begin{align}\label{eq:DSS_strategy_extra}
u^{i,\ast}_t&= \theta_t(s) x^i_t - \sum_{j=1}^{f(s)} \frac{\alpha^{i,j}(s)}{\beta^i(s)} \theta_t(s) \bar x^j_t(s) +\rho^{n,i}_t+ \sum_{j=1}^{f(s)} \frac{\alpha^{i,j}(s)}{\beta^i(s)}  \bar{\boldsymbol \theta}^{j}_t(s) \bar{\mathbf x}_t  +\sum_{j=1}^{f(s)}  \frac{\alpha^{i,j}(s)}{\beta^i(s)}  \bar{\boldsymbol \rho}^{n,j}_t(s).
\end{align}
\end{Corollary}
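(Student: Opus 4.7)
The plan is to mirror the derivation of Theorem~\ref{thm:mfs} after suitably adapting the gauge transformation so that the optimization factors $\beta^i(s)$ are absorbed. First, I would verify that the modified gauge in the statement of the corollary enjoys the same orthogonality properties as the original one (Lemmas~\ref{lemma:linear_dep}--\ref{lemma:orthogonal}). Indeed, writing $\tilde\alpha^{i,j}(s):=\alpha^{i,j}(s)/\sqrt{\beta^i(s)}$, the normalization assumed in the corollary is exactly $\frac{1}{n(s)}\sum_i \tilde\alpha^{i,j}\tilde\alpha^{i,j'}=\ID{j=j'}$. A direct computation then shows $\sum_{i=1}^{n(s)}\alpha^{i,j}(s)\Delta x^i_t=0$ (and analogously for $\Delta u^i_t$, $\Delta w^i_t$, $\Delta r^i_t$), and similarly $\sum_i \alpha^{i,j}(s)(\Delta x^i_t-\Delta r^i_t)^\intercal Q_t(s)(\bar x^j_t(s)-\bar r^j_t(s))=0$ and $\sum_i\alpha^{i,j}(s)(\Delta u^i_t)^\intercal R_t(s)\bar u^j_t(s)=0$, because the factor $1/\beta^i(s)$ that enters through the gauge combines with $\alpha^{i,j}\alpha^{i,j'}$ to yield precisely the assumed orthogonality.

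Next, I would exploit Assumption~\ref{ass:decoupled} to derive clean dynamics for the transformed variables. With $\bar A^j_t(s)=\bar B^j_t(s)=0$, the local state satisfies $d(x^i_t)=(A_t(s)x^i_t+B_t(s)u^i_t)dt+C_t(s)dw^i_t$, and averaging yields $d(\bar x^j_t(s))=(A_t(s)\bar x^j_t(s)+B_t(s)\bar u^j_t(s))dt+C_t(s)d\bar w^j_t(s)$. Subtracting the second equation (weighted by $\alpha^{i,j}(s)/\beta^i(s)$ and summed over $j$) from the first gives $d(\Delta x^i_t)=(A_t(s)\Delta x^i_t+B_t(s)\Delta u^i_t)dt+C_t(s)d(\Delta w^i_t)$, i.e.\ the dynamics of $\Delta x^i_t$ and of $\bar{\mathbf x}_t$ are fully decoupled.

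Then I would decompose the cost~\eqref{eq:total_cost_extra}. Writing $x^i_t-r^i_t=(\Delta x^i_t-\Delta r^i_t)+\sum_j \frac{\alpha^{i,j}(s)}{\beta^i(s)}(\bar x^j_t(s)-\bar r^j_t(s))$, expanding the quadratic, and using the orthogonality obtained above to kill the cross terms, the integrand $\sum_s\frac{\mu(s)}{n(s)}\sum_i\beta^i(s)(\Norm{x^i_t-r^i_t}{Q_t(s)}+\Norm{u^i_t}{R_t(s)})$ splits into (i) a sum of per-agent terms $\frac{\mu(s)}{n(s)}\sum_i\beta^i(s)(\Norm{\Delta x^i_t-\Delta r^i_t}{Q_t(s)}+\Norm{\Delta u^i_t}{R_t(s)})$, plus (ii) a purely deep term in $\bar{\mathbf x}_t(s)$, $\bar{\mathbf u}_t(s)$ whose weights are computed via $\sum_i\alpha^{i,j}\alpha^{i,j'}/\beta^i = n(s)\ID{j=j'}$. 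The deep couplings $\Norm{\bar{\mathbf x}_t}{\bar Q_t(s)}$ and $\Norm{\bar{\mathbf u}_t}{\bar R_t(s)}$ simply pick up the scalar factor $\bar\beta(s):=\frac{1}{n(s)}\sum_i\beta^i(s)$ from the sum of $\beta^i(s)$.

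Finally, since each of the decoupled problems is a standard (risk-neutral) LQ tracking problem convex by Assumption~\ref{assumption1}.(a), I would invoke the argument already used in Theorem~\ref{thm:mfs}: each agent's transformed problem is solved by a local Riccati equation and a tracking correction yielding $\Delta u^{i,\ast}_t = \theta_t(s)\Delta x^i_t + \rho^{n,i}_t$, while the aggregate problem is solved by a global deep Riccati equation yielding $\bar{\mathbf u}^{\ast}_t = \bar{\boldsymbol\theta}_t\bar{\mathbf x}_t + \bar{\boldsymbol\rho}^n_t$. Undoing the gauge via $u^{i,\ast}_t = \Delta u^{i,\ast}_t + \sum_j \frac{\alpha^{i,j}(s)}{\beta^i(s)}\bar u^{j,\ast}_t(s)$ yields~\eqref{eq:DSS_strategy_extra}. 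The main obstacle is the bookkeeping needed in paragraph~three to confirm that the new $1/\beta^i(s)$ weights in the gauge conspire with the new normalization to produce the same cross-term cancellation and the same effective deep-cost matrices as in the $\beta^i(s)\equiv 1$ case; once that is done, the decoupling is complete and the rest is a direct application of the classical LQ tracking solution.
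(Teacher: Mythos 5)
Your proposal is correct and takes essentially the same approach as the paper: the paper's own proof is a one-line remark that the argument of Theorem~\ref{thm:mfs} carries over via the certainty-equivalence theorem and the modified gauge transformation under Assumption~\ref{ass:decoupled}, which is exactly the decomposition you spell out in detail. The bookkeeping you flag at the end does go through as you describe, since the $1/\beta^i(s)$ weights in the gauge cancel against the $\beta^i(s)$ weights in the cost and restore the linear-dependency and orthogonality relations of Lemmas~\ref{lemma:linear_dep} and~\ref{lemma:orthogonal}, while the dynamic decoupling of Assumption~\ref{ass:decoupled} is what keeps the transformed dynamics separable under the modified gauge.
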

\begin{proof} 
The proof is similar to that of Theorem~\ref{thm:mfs}, where the certainty equivalence theorem and  modified gauge transformation under Assumption~\ref{ass:decoupled} leads to the decomposition of the centralized HJB equation.  For more details, see~\cite{arabneydi2016new}[Chapter 3] that uses a similar argument.
\end{proof}

\begin{Remark}
\emph{The extension to optimization factors does not generally hold for coupled dynamics and/or risk-sensitive case.}
\end{Remark}

\subsection{Connection to deep neural networks}\label{sec:NN}

The mathematics of  deep neural networks  can be traced back to the seminal work of Gauss~\cite{gauss1809theoria} in regression theory and its  application to  the study of  the cat's visual cortex in physiology~\cite{hubel1959receptive,hubel1962receptive}. Despite the recent progresses in deep learning~\cite{lecun2015deep,schmidhuber2015deep},  there are still many fundamental challenges  that need to be addressed such as performance guarantee and interpretability, high number of parameters and tunability, troubleshooting,   prior knowledge and small data.

In what follows, we highlight some aspects of deep structured  teams that can be useful for deep learning, and more importantly, for deep reinforcement learning. For simplicity of  presentation,  consider a special case of one sub-population with dynamically decoupled agents that is discretized with a sampling time $\Delta t$ and zero-order hold such that  $t=k \Delta t$, where  variables are indexed by a non-negative integer $k$. Therefore, the dynamics~\eqref{eq:dynamics1} can be expressed as:
%\begin{equation}
$x^i_{k+1}= (\mathbf I+A_k\Delta t) x^i_k+ (B_k\Delta t) u^i_k+ w^i_k$,
%\end{equation}
where $w^i_k \sim \mathcal{N}(0, \Sigma^w_k \Delta t)$. From Theorem~\ref{thm:mfs},  the  dynamics of  agent $i \in \mathbb{N}_n$ under the  optimal  strategy is described by:
\begin{equation}\label{eq:NN_structure}
 x^i_{k+1}=\sum_{m=1}^n W^{i,m}_k(\lambda, \boldsymbol \alpha) x^m_k+b^i_k(\lambda, \boldsymbol \alpha)+w^i_k,
\end{equation}
where the optimal weight matrix and  bias term are as follows: for every $i,m \in \mathbb{N}_n$, $i \neq m$, 
\begin{equation}
\begin{cases}
W^{i,i}_k(\lambda, \boldsymbol \alpha):=\mathbf I+A_k \Delta t + B_k \Delta t \big( (1- \frac{1}{n} \sum_{j=1}^f (\alpha^{i,j})^2)  \theta^n_k + \frac{1}{n} \sum_{j=1}^f \sum_{j'=1}^f \alpha^{i,j} \alpha^{i,j'} \bar{ \theta}^{n,j,j'}_k\big),\\
W^{i,m}_k(\lambda, \boldsymbol \alpha):=- B_k \Delta t \big(\frac{1}{n}\sum_{j=1}^f \alpha^{i,j}\alpha^{m,j}\theta^n_k + \frac{1}{n}\sum_{j=1}^f \sum_{j'=1}^f \alpha^{i,j}\alpha^{m,j'} \bar{\theta}^{n,j,j'}_k)\big),\\
 b^i_k(\lambda, \boldsymbol \alpha):=B_k \Delta t (\rho^{n,i}_t + \sum_{j=1}^f\alpha^{i,j}  \bar{\boldsymbol \rho}_k^{n,j}).
 \end{cases}
\end{equation}

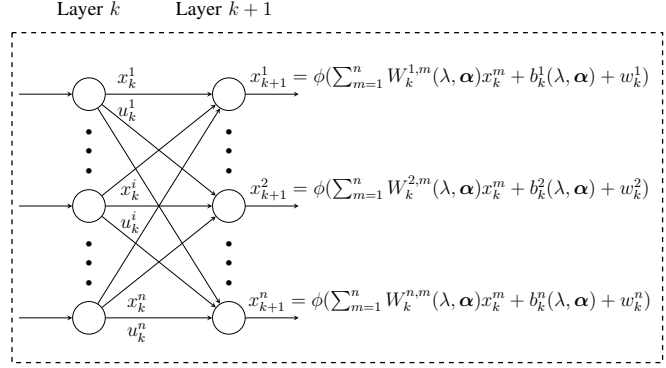
\begin{figure}
\hspace{2cm}
\scalebox{0.62}{
\begin{tikzpicture}[x=1.5cm, y=1.2cm, >=stealth]
  \draw[font=\large](0,3cm) node {Layer $k$} ;
\draw[font=\large](2.9cm,3cm) node {Layer $k+1$} ;
\draw[black, thick, dashed] (-1.1,2.1) rectangle (8.2,-3.8);

\foreach \m/\l [count=\y] in {1,missing,2,missing,3}
  \node [every neuron/.try, neuron \m/.try] (input-\m) at (0,2-\y) {};

\foreach \m/\l [count=\y] in {1,missing,2,missing,3}
  \node [every neuron/.try, neuron \m/.try ] (output-\m) at (2,2-\y) {};

  \draw [<-] (input-1) -- ++(-1,0)
    node [above,midway] {\hspace{3.5cm}\large $ x^1_k$}
        node [below,midway] {\hspace{3.5cm}\large $u^1_k$};
     \draw [<-] (input-2) -- ++(-1,0)
    node [above,midway] {\hspace{3.5cm} \large $x^i_k$}
        node [below,midway] {\hspace{3.5cm} \large $u^i_k$};
        
          \draw [<-] (input-3) -- ++(-1,0)
    node [above,midway] {\hspace{3.8cm} \large $x^{n}_k$}
        node [below,midway] {\hspace{3.8cm} \large $u^{n}_k$};

  \draw [->]  (output-1) -- ++(1,0)  node[above, midway]  
   {\hspace*{7.5cm} \large $x^1_{k+1}= \phi(\sum_{m=1}^n W^{1,m}_k(\lambda, \boldsymbol \alpha) x^m_k+ b^1_k(\lambda, \boldsymbol \alpha) +w^1_k)$};   
      \draw [->]  (output-2) -- ++(1,0)  node[above, midway]         {\hspace*{7.5cm} \large $x^2_{k+1}= \phi(\sum_{m=1}^n W^{2,m}_k(\lambda, \boldsymbol \alpha) x^m_k+ b^2_k(\lambda, \boldsymbol \alpha) +w^2_k)$}; 
    
      \draw [->]  (output-3) -- ++(1,0)  node[above, midway]         {\hspace*{7.5cm} \large $x^n_{k+1}= \phi(\sum_{m=1}^n W^{n,m}_k(\lambda, \boldsymbol \alpha) x^m_k+ b^n_k(\lambda, \boldsymbol \alpha) +w^n_k)$}; 
    \foreach \i in {1,...,3}
  \foreach \j in {1,...,3}
    \draw [->] (input-\i) -- (output-\j);    
\end{tikzpicture}
}
\caption{The optimal solution of a deep structured team resembles a deep feed-forward neural network, where  optimal gains and biases are obtained analytically while taking into account regularization, scalability, uncertainty and robustness of the design. Such an explicit representation can  be useful in formulating more complex deep neural networks.
%; for example,  ReLU activation function can be formulated as an inequality constraint.
}
\end{figure}

A salient property of the  structure~\eqref{eq:NN_structure} is  that it is interpretable, i.e.,  it  provides a one-to-one relationship between the parameters of a deep neural network and those of the control system  presented in~Section~\ref{sec:problem}.  In addition, since weight initialization is  a key step in training a deep neural network,  it is possible to use  the above optimal weight matrix for computing  initial conditions (using  a deep Riccati equation for a network  consisting of $n$ neurons with depth $T/ \Delta t$,  where each neuron  has its own influence factor  and nominal operating point).   Moreover,  to efficiently  incorporate a certain design, it is sometimes necessary to simultaneously adjust the weights at all layers  in a consistent manner. For example,  the robustness parameter $\lambda$ can automatically tune  the weights  in such a way that  the resultant  design  is  resilient to  uncertainties.  Also, using   large penalty functions on control actions can  promote sparsity.  Such  design specifications are  important in ensuring that the  network is not over-fitted (i.e., there is room for uncertainties) or it does not contain excessive   number of coefficients.  Furthermore, the above results can   be  extended to  \emph{constrained} deep structured teams,  where  quadratic programming and MPC techniques can be utilized to tackle the control problem.  For instance, rectified linear unit (ReLU) activation function can be viewed as a sufficient condition to impose an inequality  constraint (i.e. $ x^i_k \geq 0$, $i \in \mathbb{N}_n$)  such that:
%\begin{equation}\label{eq:ReLU}
$x^i_{k+1}=\phi(\sum_{m=1}^n W^{i,m}_k(\lambda, \boldsymbol \alpha) x^m_k+b^i_k(\lambda, \boldsymbol \alpha))$,
%\end{equation}
where $\phi(\boldsymbol \cdot ):=\max(0,\boldsymbol \cdot)$. This perspective is consistent with the fact that  ReLU activation function has been successfully used  in  classification problems, e.g., image processing,  where  the states of interest  are in the form of probability (frequency) of the occurrence of a feature (which is a non-negative number). Thus,  ReLU   function may be interpreted as a sufficient condition to ensure that the  feasible states are non-negative. 
%Finally, it is worth highlighting that microscopic and macroscopic  influence factors $\alpha$ and $\mu$ provide a scheme similar to attention mechanism and transfer learning, that is the state-of-the-art in deep learning and has recently shown a significant performance~\cite{vaswani2017attention}. 

\section{Main results with partial deep-state sharing}\label{sec:main_pdss}
In this section, we  propose two sub-optimal solutions  for PDSS structure under Assumption~\ref{assumption3} and the following standard assumption on the initial states.
\begin{Assumption}\label{ass:initial_state}
The initial states of each sub-population $s \notin \mathcal{O}$ are mutually independent with   identical mean such that $\Exp{x^i_0}=\Exp{x^{i'}_0}$, $\forall i,i' \in \mathbb{N}_{n(s)}$, $\forall s \notin \mathcal O$.
\end{Assumption}
To quantify the performance of each sub-optimal solution, we define the price of information.
\begin{Definition}[Price of Information]
The price of information (PoI) of a PDSS strategy~$\hat{\mathbf g}$ is defined as the performance gap between strategy $\hat{\mathbf g}$ and optimal DSS strategy $\mathbf g^\ast$, i.e.,
%\begin{equation}
$\Delta \tilde J^n_T(\hat{\mathbf g}):=\tilde J_T^{n}(\hat{\mathbf g}) - \tilde J_T^{n}(\mathbf g^\ast)$.
%\end{equation}
\end{Definition}

To distinguish the proposed sub-optimal solutions from  the optimal one, let $\hat x^i_t$ and $\hat u^i_t$ denote the state and action of agent $i \in \mathbb{N}_{n(s)}$ in sub-population $s \in \mathbb{N}_S$ under the sub-optimal strategy $\hat{\mathbf{g}}$ at any time $t \in [0,T]$. For any feature $j \in \mathbb{N}_{f(s)}$ in sub-population $s \in \mathbb{N}_S $, define
%\begin{equation}
$\hat {\bar x}_t^j(s):=\frac{1}{n(s)}\sum_{i=1}^{n(s)} \alpha^{i,j}(s) \hat x^i_t$ and $\hat {\bar u}_t^j(s):=\frac{1}{n(s)}\sum_{i=1}^{n(s)} \alpha^{i,j}(s) \hat u^i_t$.
%\end{equation}

Let $\mathbf z^n_t:=\VEC(\VEC(z_t^{n,1}(s),\ldots,z_t^{n,f(s)}(s)))_{s=1}^S$ be an  estimate of $\bar{\mathbf x}_t$ such that  $\mathbf z^n_1(s):=\bar{\mathbf  x}_1(s)$, $s \in \mathcal{O}$,  $\mathbf z^n_1(s):=\Exp{\bar{\mathbf  x}_1(s) }$, $s \notin \mathcal{O}$, and  for any $t \in (0,T]$,
\begin{equation}\label{eq:finite_estimate}
\begin{cases}
\mathbf z^n_t(s)=\bar{\mathbf x}_t(s), & s \in \mathcal{O},\\
\dot{\mathbf z}^n_t(s)=(\bar{\mathbf A}_t(s)+\bar{\mathbf B}_t(s)\bar{\boldsymbol \theta}^n_t)\mathbf z^n_t+ \bar{\mathbf B}_t(s) \bar{\boldsymbol \rho}^n_t, & s \notin \mathcal{O}.
\end{cases}
\end{equation}
Based on the finite-population  estimate in~\eqref{eq:finite_estimate},  we propose a  PDSS strategy such that the action of any agent $i \in \mathbb{N}_{n(s)}$  in  sub-population $s \in \mathbb{N}_S$ at any time $t \in [0,T]$ is given by:
\begin{equation}\label{eq:PDSS1_strategy}
\begin{cases}
\hat  u^i_t= \theta^n_t(s) x^i_t - \sum_{j=1}^{f(s)} \alpha^{i,j}(s) \theta^n_t(s)  z^{n,j}_t(s) +\rho^{n,i}_t  + \sum_{j=1}^{f(s)} \alpha^{i,j}(s) \bar{\boldsymbol \theta}^{n,j}_t(s) \mathbf z^n_t +\sum_{j=1}^{f(s)} \alpha^{i,j}(s) \bar{\boldsymbol \rho}^{n,j}_t(s),\\
\dot{\mathbf  z}^n_t(s)= (\bar{\mathbf A}_t(s)+\bar{\mathbf B}_t(s)\bar{\boldsymbol \theta}^n_t)\mathbf z^n_t+ \bar{\mathbf B}_t(s) \bar{\boldsymbol \rho}^n_t   +H(s)(\dot{\bar{\mathbf x}}_t(s) -(\bar{\mathbf A}_t(s)+\bar{\mathbf B}_t(s)\bar{\boldsymbol \theta}^n_t) \mathbf z^n_t- \bar{\mathbf B}_t(s) \bar{\boldsymbol \rho}^n_t  ),
\end{cases}
\end{equation}
where the update rule of the estimate in~\eqref{eq:finite_estimate} is expressed in a Kalman-like fashion such that  the observer gain $H(s)=0$ if $s \notin \mathcal{O}$  and $H(s)=1$ if $s \in \mathcal{O}$.   Although~this estimator resembles a Kalman filter,  it is fundamentally  different from the standard Kalman filter and it is not necessarily the best possible estimate.  On the other hand,  it provides a measurable estimate  $\mathbf z^n_t$  with respect to a non-classical  PDSS information structure.   The interested reader is referred to~\cite[Theorem 7]{Jalal2019TSNE} for more details on the above Kalman-like filter  that emerges in the trade-off between data collection and data estimation. 

Alternatively,  it is  possible to  ignore the number of agents as well as the covariance matrices of sub-populations $s \notin \mathcal{O}$ and propose a Kalman-like filter similar to~\eqref{eq:PDSS1_strategy} based on the  infinite-population model such that the  action of agent $i \in \mathbb{N}_{n(s)}$, $s \in \mathbb{N}_S$, at time $t \in [0,T]$ is described by:
\begin{equation}\label{eq:PDSS2_strategy}
\begin{cases}
\hat  u^i_t= \theta^\infty_t(s) x^i_t - \sum_{j=1}^{f(s)} \alpha^{i,j}(s) \theta^\infty_t(s)  z^{\infty,j}_t(s) +\rho^{\infty,i}_t  + \sum_{j=1}^{f(s)} \alpha^{i,j}(s) \bar{\boldsymbol \theta}^{\infty,j}_t(s) \mathbf z^{\infty}_t +\sum_{j=1}^{f(s)} \alpha^{i,j}(s) \bar{\boldsymbol \rho}^{\infty,j}_t(s),\\
\dot{\mathbf  z}^\infty_t(s)= (\bar{\mathbf A}_t(s)+\bar{\mathbf B}_t(s)\bar{\boldsymbol \theta}^\infty_t)\mathbf z^\infty_t+ \bar{\mathbf B}_t(s) \bar{\boldsymbol \rho}^\infty_t  +H(s)(\dot{\bar{\mathbf x}}_t(s) -(\bar{\mathbf A}_t(s)+\bar{\mathbf B}_t(s)\bar{\boldsymbol \theta}^\infty_t) \mathbf z^\infty_t- \bar{\mathbf B}_t(s) \bar{\boldsymbol \rho}^\infty_t  ),
\end{cases}
\end{equation}
%\begin{align}\label{eq:filter_inf}
%\dot{\mathbf  z}^\infty_t(s)&= (\bar{\mathbf A}_t(s)+\bar{\mathbf B}_t(s)\bar{\boldsymbol \theta}^\infty_t)\mathbf z^\infty_t+ \bar{\mathbf B}_t(s) \bar{\boldsymbol \rho}^\infty_t \nonumber\\
%& +H(s)(\dot{\bar{\mathbf x}}_t(s) -(\bar{\mathbf A}_t(s)+\bar{\mathbf B}_t(s)\bar{\boldsymbol \theta}^\infty_t) \mathbf z^\infty_t- \bar{\mathbf B}_t(s) \bar{\boldsymbol \rho}^\infty_t  ).
%\end{align}
%Thus, the  action of agent $i \in \mathbb{N}_{n(s)}$, $s \in \mathbb{N}_S$, at time $t \in [0,T)$ is given by:
%\begin{align}\label{eq:PDSS2_strategy}
%\hat  u^i_t&= \theta^\infty_t(s) x^i_t - \sum_{j=1}^{f(s)} \alpha^{i,j}(s) \theta^\infty_t(s)  z^{\infty,j}_t(s) +\rho^{\infty,i}_t \nonumber \\
%&\quad + \sum_{j=1}^{f(s)} \alpha^{i,j}(s) \bar{\boldsymbol \theta}^{\infty,j}_t(s) \mathbf z^{\infty}_t +\sum_{j=1}^{f(s)} \alpha^{i,j}(s) \bar{\boldsymbol \rho}^{\infty,j}_t(s),
%\end{align}
where the  above matrices, drifts and estimates are computed based on an infinite-population model, where $n(s)$ is set to infinity  for every sub-population $s \notin \mathcal{O}$.  Let  $\hat{\mathbf g}^n$ and  $\hat{\mathbf g}^\infty$ denote the PDSS  strategies~\eqref{eq:PDSS1_strategy} and \eqref{eq:PDSS2_strategy}, respectively.

\begin{Theorem}\label{thm:pmfs}
Let Assumptions~\ref{assumption1},~\ref{assumption3} and~\ref{ass:initial_state} hold. For Problem~\ref{prob2}, PDSS  strategies~\eqref{eq:PDSS1_strategy}  and~\eqref{eq:PDSS2_strategy}  provide  two different  solutions, where their  corresponding prices of information converge to zero as $n^\ast$ goes to  infinity, i.e.,
%\begin{equation}
$\lim_{n^\ast \rightarrow \infty} \Delta J^n_T(\hat{\mathbf g}^n)=0$ and $\lim_{n^\ast \rightarrow \infty} \Delta J^n_T(\hat{\mathbf g}^\infty)=0$.
%\end{equation}
\end{Theorem}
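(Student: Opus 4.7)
The plan is to exploit the gauge decomposition of Section~\ref{sec:main} together with the scaling of the deep-state fluctuation with sub-population size. Under any linear feedback of the form~\eqref{eq:PDSS1_strategy} or~\eqref{eq:PDSS2_strategy}, the transformed closed-loop system~\eqref{eq:dynamics_mean-field_1} splits into a collection of per-agent ``deviation'' subsystems (in the variables $\Delta x^i_t$) and a single aggregate ``deep-state'' subsystem (in $\bar{\mathbf x}_t$). The deviation subsystem depends only on local quantities (agent $i$'s own state and the local Riccati gains), so its contribution to the cost~\eqref{eq:decomposition_cost} is identical under $\mathbf g^\ast$, $\hat{\mathbf g}^n$, and $\hat{\mathbf g}^\infty$. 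The PoI therefore reduces to the error induced in the aggregate subsystem by replacing the true $\bar{\mathbf x}_t(s)$, $s\notin \mathcal{O}$, with its estimate.

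Next I would bound this replacement error. For $s\in \mathcal{O}$ the estimate is exact, so consider $s\notin \mathcal{O}$ and set $e_t(s):=\bar{\mathbf x}_t(s)-\mathbf z^n_t(s)$. Subtracting the second line of~\eqref{eq:PDSS1_strategy} from the dynamics of $\bar{\mathbf x}_t(s)$ (obtained by aggregating~\eqref{eq:dynamics1}) yields a linear ODE in $e_t(s)$ driven by the aggregate noise $\bar{\mathbf C}_t(s)\, d\bar{\mathbf w}_t(s)$ and by residual coupling to $e_t(s')$, $s'\notin \mathcal{O}$. By Lemma~\ref{lemma:correlation}, $\Exp{\bar{\mathbf w}_t(s)\bar{\mathbf w}_t(s)^\intercal}$ scales as $1/n(s)$; Assumption~\ref{ass:initial_state} ensures $\Exp{e_0(s)}=\mathbf 0$ and $\Exp{\|e_0(s)\|^2}=\mathcal{O}(1/n(s))$. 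A Gronwall estimate on the resulting linear time-varying system, using that the coefficients are uniformly bounded in time under Assumption~\ref{assumption3}, then gives $\sup_{t\in[0,T]}\Exp{\|e_t(s)\|^2}=\mathcal{O}(1/n^\ast)$ uniformly in $s\notin \mathcal O$.

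I would then translate this mean-square estimation error into a bound on $\Delta \tilde J^n_T$. The trajectory of the transformed system under $\hat{\mathbf g}^n$ differs from that under $\mathbf g^\ast$ only through the injection of $e_t(s)$ into the feedback, so the state and action trajectory differences are themselves $\mathcal{O}(1/\sqrt{n^\ast})$ in $L^2$. Because the cost kernel in~\eqref{eq:cost1} is quadratic and the relevant exponential moments of quadratic forms in Gaussians are finite (ensured by Assumption~\ref{assumption1}(b), which keeps $\tilde J^n_T$ well-defined), the map $\mathbf g\mapsto \tilde J^n_T(\mathbf g)$ is locally Lipschitz in the $L^2$-norm of the resulting trajectory, yielding $\Delta \tilde J^n_T(\hat{\mathbf g}^n)=\mathcal{O}(1/\sqrt{n^\ast})\to 0$. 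For $\hat{\mathbf g}^\infty$, an additional step is needed: the gains $(\theta^\infty_t,\bar{\boldsymbol\theta}^\infty_t,\rho^\infty_t,\bar{\boldsymbol\rho}^\infty_t)$ are obtained from~\eqref{eq:Riccati}--\eqref{eq:gains} with $n(s)\to\infty$ for $s\notin\mathcal{O}$, so the only coefficient discrepancy lies in the terms $2\lambda\mu(s)\Sigma^w_t(s)/n(s)$ and $2\lambda\bar{\boldsymbol\Sigma}^w_t$; continuous dependence of the Riccati ODE~\eqref{eq:Riccati} on its coefficients, in the spirit of Corollary~\ref{cor:risk}, propagates the $\mathcal{O}(1/n^\ast)$ rate to the gains and hence to the controls.

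The main obstacle will be making the bound on $\Delta \tilde J^n_T$ rigorous despite the nonlinearity of $\tilde J^n_T=\lambda^{-1}\log\Exp{e^{\lambda\bar J^n_T}}$, which prevents a direct Cauchy--Schwarz estimate. I would address this by writing the two cost trajectories as perturbations of a common Gaussian trajectory and using a change-of-measure (Girsanov) representation together with uniform boundedness of the exponential moments of the perturbation (guaranteed by Assumption~\ref{assumption1}(b) and Assumption~\ref{assumption3}). This reduces the risk-sensitive PoI bound to an $L^2$-type estimate on the trajectory difference, which is exactly what the Gronwall step above provides.
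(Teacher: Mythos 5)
Your proposal is correct in substance and follows the same overall route as the paper's proof: define the estimation error $e_t(s)=\bar{\mathbf x}_t(s)-\mathbf z^n_t(s)$ (and implicitly $\xi_t(s)=\hat{\bar{\mathbf x}}_t(s)-\mathbf z^n_t(s)$), observe that it obeys a linear time-varying system driven by the aggregate noise $\bar{\mathbf w}_t(s)$ whose covariance scales as $1/n(s)$, conclude that the DSS and PDSS trajectories coalesce as $n^\ast\to\infty$, and pass to the cost; the $\hat{\mathbf g}^\infty$ case is then handled by continuity of the Riccati solution in the $2\lambda\mu(s)\Sigma^w_t(s)/n(s)$ terms plus a triangle inequality, exactly as in the paper. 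Where you differ is in flavor rather than structure: the paper argues almost-sure pathwise convergence via the strong law of large numbers and then invokes continuity of the cost, whereas you run a Gronwall estimate to get a quantitative $\sup_t\Exp{\|e_t(s)\|^2}=\mathcal O(1/n^\ast)$ bound and hence a rate $\mathcal O(1/\sqrt{n^\ast})$ for the price of information. Your version buys an explicit convergence rate; the paper's buys brevity. You also explicitly confront the step the paper glosses over, namely that $\tilde J^n_T=\lambda^{-1}\log\Exp{e^{\lambda\bar J^n_T}}$ is not obviously continuous under $L^2$ perturbations of the trajectory --- the paper simply asserts continuity of the cost, while your Girsanov/uniform-exponential-moment plan is the right way to make that step rigorous, though it remains a sketch and is the one place where real work is still required.

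Two small cautions. First, your opening claim that the deviation-subsystem contribution is ``identical'' under all three strategies is true for $\mathbf g^\ast$ and $\hat{\mathbf g}^n$ (the gauge-transformed feedback $\Delta\hat u^i_t=\theta^n_t(s)\Delta\hat x^i_t+\cdots$ is unchanged by the substitution of $\mathbf z^n_t$ for $\bar{\mathbf x}_t$), but it is false for $\hat{\mathbf g}^\infty$, whose local gain $\theta^\infty_t(s)$ differs from $\theta^n_t(s)$ by an $\mathcal O(1/n(s))$ perturbation of the local Riccati equation; you do repair this later, but the reduction should be stated only for $\hat{\mathbf g}^n$. Second, the reduction of the PoI to the aggregate subsystem alone uses a factorization $\Exp{e^{\lambda(D+A)}}=\Exp{e^{\lambda D}}\Exp{e^{\lambda A}}$ that requires independence of the deviation and aggregate parts; this holds for the Brownian contributions by Lemma~\ref{lemma:correlation}, but the paper allows arbitrarily correlated initial states, so the splitting is not exact and your perturbative/change-of-measure argument must carry the cross term as well.
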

\begin{proof}

Define a relative error between the deep states of sub-population $s \in \mathbb{N}_S$ at time $t \in [0,T]$ under the optimal strategy~\eqref{eq:DSS_strategy}, i.e. $ \bar{\mathbf  x}_t(s)$, and their estimates under  the sub-optimal strategy~\eqref{eq:PDSS1_strategy}, i.e. $\mathbf z_t^{n}(s)$,  such that  
\begin{equation}\label{eq:error_1}
e_t(s):= \bar{\mathbf x}_t(s)-\mathbf z_t^n(s).
\end{equation}
 Similarly, define  a relative  error between the deep states of sub-population $s \in \mathbb{N}_S$ at time $t$ under  under the sub-optimal strategy~\eqref{eq:PDSS1_strategy}, i.e. $\hat{\bar{\mathbf{x}}}_t(s)$, and  the estimate $\mathbf z_t^n(s)$, i.e.,
 \begin{equation}\label{eq:error_2} 
 \xi_t(s):=\hat{\bar{\mathbf{x}}}_t(s)-\mathbf z_t^n(s).
 \end{equation}
By definition, $e_0(s)=\xi_0(s)=\mathbf{0}$, $s \in \mathcal{O}$. Let $\mathbf e_t=\VEC(e_t(1),\ldots,e_t(S))$ and $\boldsymbol \xi_t=\VEC(\boldsymbol \xi_t(1),\ldots,\xi_t(S))$. From Theorem~\ref{thm:mfs}, the dynamics of deep states in sub-population $s \in \mathbb{N}_S$ under the optimal strategy is given by:
\begin{equation}\label{eq:optimal_dynamics_proof}
\Compress
d(\bar{\mathbf x}_t(s))=(\bar{\mathbf A}_t(s)+\bar{\mathbf B}_t(s)\bar{\boldsymbol \theta}^n_t)\bar{\mathbf x}_tdt+ \bar{\mathbf B}_t(s) \bar{\boldsymbol \rho}^n_t(s)+ \bar{\mathbf C}_t(s)d(\bar{\mathbf w}_t(s)).
\end{equation} 
Hence, from equations~\eqref{eq:mf1},~\eqref{eq:dynamics1},~\eqref{eq:augmented_variables},~\eqref{eq:DSS_strategy},~\eqref{eq:PDSS1_strategy},~\eqref{eq:error_1},~\eqref{eq:error_2} and~\eqref{eq:optimal_dynamics_proof}, and after some algebraic manipulations,  it follows that for any sub-population $s \notin \mathcal{O}$ at time $t \in (0,T)$,
\begin{equation}\label{eq:dynamics_error_1}
\begin{cases}
d(e_t(s))= (\bar{\mathbf A}_t(s)+\bar{\mathbf B}_t(s)\bar{\boldsymbol \theta}^n_t) \mathbf e_t dt+ \bar{\mathbf C}_t(s)d(\bar{\mathbf w}_t(s)),\\
d(\xi_t(s))=(\bar{\mathbf A}_t(s)+\bar{\mathbf B}_t(s)\boldsymbol \theta_t^n) \boldsymbol \xi_t dt+ \bar{\mathbf C}_t(s)d(\bar{\mathbf w}_t(s)),
\end{cases}
\end{equation}
where $\boldsymbol \theta_t^n:=\DIAG(\DIAG(\theta^n_t(s))_{f(s)})_{s=1}^S$
 and for any  $s \in \mathcal{O}$, 
 \begin{equation}\label{eq:dynamics_error_2}
\begin{cases}
\dot e_t(s)= (\bar{\mathbf A}_t(s)+\bar{\mathbf B}_t(s)\bar{\boldsymbol \theta}^n_t) \mathbf e_t -(\bar{\mathbf A}_t(s)+\bar{\mathbf B}_t(s)\boldsymbol \theta_t^n) \boldsymbol \xi_t ,\\
\xi_t(s)=\Zero_{f(s)d^s_x \times f(s)d^s_x}.
\end{cases}
\end{equation} 
%Let  $\mathbf H^{\mathcal{S}}:=\VEC(H^{1},\ldots,H^{K})$, where   $\mathbf H^{\mathcal{S}}$ depends on the set of sub-populations  whose deep states are observed. The above dynamics can be expressed in a more compact form  as follows:
%\begin{equation}\label{eq:compact_error}
%\begin{cases}
%d(e_t(k))=\big( A_t(k) e_t(k) + \mathbf A^{z,\lambda}_t(k) \mathbf e_t \\
%- H^\mathcal{S}(k)(A_t(k) + B_t(k) F^\lambda_t(k)) \xi_t(k)\\
% -H^\mathcal{S}(k)  (\bar A_t(k)+\bar B_t(k) \DIAG(F^\lambda_t(1),\ldots,F^\lambda_t(K)))\boldsymbol \xi_t\big)dt\\
%+ (1-H^\mathcal{S}(k)) C_t(k) d(d^w_t(k)),\\
%d(\xi_t(k))= \big((1-H^\mathcal{S}(k))(A_t(k) +B_t(k) F^\lambda_t(k)) \xi_t(k) \\
% + (1\hspace{-.1cm}-\hspace{-.1cm}H^\mathcal{S}(k)) (\bar A_t(k) \hspace{-.1cm}+ \hspace{-.1cm} \bar B_t(k) \DIAG(F^\lambda_t(1),\ldots,F_t^\lambda(K)))\boldsymbol \xi_t )\big)dt\\
%+ (1-H^\mathcal{S}(k)) C_t(k) d(d^w_t(k)).
%\end{cases}
%\end{equation}
%\begin{equation}\label{eq:compact_error}
%\begin{cases}
%d(\mathbf e_t)= (\mathbf A_t+ \mathbf B_t \mathbf F^\lambda_t) \mathbf e_t dt + (\mathbf 1 -\mathbf H^\mathcal{S}) \Diam \mathbf C_t d(\bar {\mathbf w}_t)\\
%\quad - \mathbf H^\mathcal{S} \Diam (\mathbf A_t+\mathbf B_t \DIAG(F^{\lambda,1}_t,\ldots,F^{\lambda,K}_t))\boldsymbol \xi_t dt,\\
%d(\boldsymbol \xi_t)=(\mathbf 1 - \mathbf H^\mathcal{S}) \Diam
%(\mathbf A_t+\mathbf B_t \DIAG(F^{\lambda,1}_t,\ldots,F^{\lambda,K}_t))\boldsymbol \xi_t dt\\
%\quad +(\mathbf 1-\mathbf H^\mathcal{S}) \Diam \mathbf C_t d(\bar{\mathbf w}_t).
%\end{cases}
%\end{equation}

It  can be concluded that the  three sample paths $\bar{\mathbf x}_t$, $\hat{\bar{\mathbf x}}_t$ and $\mathbf z^n_t$, $t \in [0,T]$, converge to the same path with probability one, as $n^\ast \rightarrow \infty$.  This is due the fact that (a)  the relative errors~\eqref{eq:error_1} and~\eqref{eq:error_2} have linear dynamics according to~\eqref{eq:dynamics_error_1} and~\eqref{eq:dynamics_error_2}; (b) matrices $\bar{\mathbf A}_t(s)$ and $\bar{\mathbf B}_t(s)$ are independent of $n(s)$, $s \notin \mathcal{O}$, from Assumption~\ref{assumption3}; (c) initial states and local noises are mutually independent with covariances matrices independent of $n(s)$;  (d)  $\lim_{n^\ast \rightarrow \infty} \theta^n_t(s)$ and  $\lim_{n^\ast \rightarrow \infty} \bar{\boldsymbol \theta}^n_t(s)$ exist because~\eqref{eq:Riccati} and~\eqref{eq:gains} are uniformly bounded and continuous in $n(s)$, and  (e)  from the strong law of large numbers, the following limits exist with   probability one for every $s \notin \mathcal{O}$: $\lim_{n^\ast \rightarrow \infty} \bar{\mathbf x}_0(s)=\lim_{n^\ast \rightarrow \infty} \hat{\bar{\mathbf x}}_0(s)=\lim_{n^\ast \rightarrow \infty} \mathbf z^n_0(s)$ and $\lim_{n^\ast \rightarrow \infty} \bar{\mathbf w}_t(s)=\mathbf 0$,  where $\lim_{n^\ast \rightarrow \infty}e_0(s)=\lim_{n^\ast \rightarrow \infty}\xi_0(s)=\mathbf 0$. Subsequently,    the two sample paths $\{\{x^i_t,u^i_t\}_{i=1}^{n(s)}\}_{s=1}^S$, governed by the DSS strategy~\eqref{eq:DSS_strategy}, and $\{\{\hat x^i_t,\hat u^i_t\}_{i=1}^{n(s)}\}_{s=1}^S$, by the PDSS strategy~\eqref{eq:PDSS1_strategy}, will  converge to the same path  with probability one, as $n^\ast \rightarrow \infty$. On the other hand,  the  cost function~\eqref{eq:cost1} is a continuous function in states and actions, and matrices $\bar{\mathbf Q}_t$ and $\bar{\mathbf R}_t$ are independent of $n(s)$, $s \in \mathbb{N}_S$.  Consequently,  the price of information of strategy~\eqref{eq:PDSS1_strategy}  converges to zero, as  $n^\ast \rightarrow \infty$.

The convergence proof for strategy~\eqref{eq:PDSS2_strategy} follows a similar argument, where  from the triangle inequality, one has:
\begin{equation}\label{eq:proof_2_aux}
\Delta \tilde J^n_T(\hat{\mathbf g}^\infty) \leq \Delta \tilde J^n_T(\hat{\mathbf g}^n)+ |\tilde J^n_T(\hat{\mathbf g}^\infty) -\tilde J^n_T(\hat{\mathbf g}^n) |.
\end{equation}
The first term of the right-hand side of~\eqref{eq:proof_2_aux} converges to zero, as shown above, and the second term of the right-hand side of~\eqref{eq:proof_2_aux} converges to zero because $\tilde J^n_T$ is  continuous and uniformly bounded with respect to the number of agents, and strategy $\hat{\mathbf g}^n$ converges to  $\hat{\mathbf g}^\infty$ with probability one,   as  $n^\ast \rightarrow \infty$,  because $\lim_{n^\ast \rightarrow \infty} \mathbf z_t^n=\mathbf z_t^\infty$,  $\lim_{n^\ast \rightarrow \infty} \boldsymbol{\theta}^n_t=\boldsymbol{\theta}^\infty_t$ and $\lim_{n^\ast \rightarrow \infty} \bar{\boldsymbol{\theta}}^n_t=\bar{\boldsymbol{\theta}}^\infty_t$.
\end{proof}

\begin{Remark}
\emph{Although both finite- and infinite-model PDSS strategies~\eqref{eq:PDSS1_strategy} and~\eqref{eq:PDSS2_strategy} converge to the same unique solution as $n^\star \rightarrow \infty$, they have  subtle differences. For example,  strategy \eqref{eq:PDSS1_strategy} takes the number of agents and covariance matrices  into account while the strategy~\eqref{eq:PDSS2_strategy}  ignores such information, which potentially leads to smaller price of information. On the other hand, strategy~\eqref{eq:PDSS2_strategy} is simpler for analysis because the influence factor of an individual  agent is asymptotically vanishing   in the infinite-population model. 
}
\end{Remark}

\section{Infinite-horizon cost function}\label{sec:generalizations}
In this section, we extend the results of  Theorems~\ref{thm:mfs} and~\ref{thm:pmfs}   to the infinite-horizon case.  Suppose  that the model is  time-homogeneous, and that the  cost function is given by:
\begin{equation}\label{eq:cost_infinite_horizon}
\tilde J^n_\infty(\mathbf g):=\limsup_{T \rightarrow \infty } \frac{1}{T} \tilde J^n_T(\mathbf g).
\end{equation}
The following   stability assumption is imposed on the model.
\begin{Assumption}\label{ass:infinite}
Let $(A(s),B(s))$ and $(\bar{\mathbf A},\bar{\mathbf B})$ be stablizable,  and $(A(s),(Q(s))^{1/2})$ and $(\bar{\mathbf A}, \bar{\mathbf Q}^{1/2})$ be detectable, $\forall s \in \mathbb{N}_S$. In addition,  there  are  positive definite matrices  $P(s)$, $\forall s \in \mathbb{N}_S$, and $\bar{\mathbf P}$ that solve the algebraic counterpart of the generalized Riccati equations~\eqref{eq:Riccati}. 
\end{Assumption}

\begin{Theorem}~\label{cor:infinite}
Let Assumptions~\ref{assumption1} and~\ref{ass:infinite}  hold.  
\begin{itemize}
\item Theorem~\ref{thm:mfs} extends to the infinite-horizon cost function~\eqref{eq:cost_infinite_horizon}  wherein the  strategy~\eqref{eq:DSS_strategy}  becomes stationary.  

\item Let also Assumptions~\ref{assumption3} and~\ref{ass:initial_state} hold.   Theorem~\ref{thm:pmfs}  extends to  the infinite-horizon cost function~\eqref{eq:cost_infinite_horizon} under an additional condition that matrix
 $\bar{\mathbf  A} +\bar{\mathbf B} \DIAG(\DIAG(\theta^n(s))_{f(s)})_{s=1}^S$ is Hurwitz. For the special case presented in Assumption~\ref{ass:decoupled}, the additional condition is automatically satisfied. 
\end{itemize}
\end{Theorem}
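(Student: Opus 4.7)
The plan is to reduce both claims to their finite-horizon counterparts (Theorems~\ref{thm:mfs} and~\ref{thm:pmfs}) by letting $T\to\infty$ in the Riccati equations~\eqref{eq:Riccati} and the gain/correction equations~\eqref{eq:gains}, and then showing that the resulting stationary quantities are well defined and yield a finite value of the time-averaged cost~\eqref{eq:cost_infinite_horizon}. The gauge transformation~\eqref{eq:gauge} together with the factorization ansatz~\eqref{eq:ansatz_value} remains the main engine: it decouples the centralized HJB equation into $S$ local risk-sensitive LQ problems (state $\Delta x^i_t$, drift $(A(s),B(s))$, noise $(\mu(s)/n(s))\Sigma^w(s)$) and one aggregate risk-sensitive LQ problem (state $\bar{\mathbf x}_t$, drift $(\bar{\mathbf A},\bar{\mathbf B})$, noise $\bar{\boldsymbol\Sigma}^w$), each of which is a classical time-homogeneous problem for which the infinite-horizon theory (Whittle-type) is standard.

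For the first claim, I would proceed as follows. Assumption~\ref{ass:infinite} guarantees that the algebraic counterparts of~\eqref{eq:Riccati} admit positive-(semi-)definite fixed points $P(s)$ and $\bar{\mathbf P}$, and that $P_t(s)\to P(s)$, $\bar{\mathbf P}_t\to\bar{\mathbf P}$ as the remaining horizon grows. I would then replace the time-dependent ansatz~\eqref{eq:ansatz_value} by its stationary analogue, with an additional scalar per sub-population carrying the long-run average cost, and verify the stationary HJB term by term; the decoupling arguments of Lemmas~\ref{lemma:p1} and~\ref{lemma:orthogonal} are purely algebraic and carry over verbatim. The resulting stationary strategy is exactly~\eqref{eq:DSS_strategy} with all matrices frozen at their algebraic-Riccati values. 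Closed-loop stability of the $\Delta x^i_t$ and $\bar{\mathbf x}_t$ dynamics follows from the stabilizability/detectability pairs in Assumption~\ref{ass:infinite} combined with positivity of $P(s),\bar{\mathbf P}$, so that the time-average cost is finite and equal to a trace expression in $P(s)\Sigma^w(s)$ and $\bar{\mathbf P}\bar{\boldsymbol\Sigma}^w$, as in classical LQR.

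For the second claim, the PDSS strategies~\eqref{eq:PDSS1_strategy}--\eqref{eq:PDSS2_strategy} are obtained by plugging the stationary gains from the first claim into the Kalman-like update. Repeating the derivation behind~\eqref{eq:error_1}--\eqref{eq:dynamics_error_2} gives the same linear error system, now driven by a Brownian motion whose covariance is of order $1/n^\ast$. The finite-horizon proof of Theorem~\ref{thm:pmfs} merely needed boundedness on $[0,T]$, but in the infinite-horizon setting I must bound the steady-state error covariance uniformly in time. This is exactly what the extra hypothesis buys: since $\bar{\mathbf A}+\bar{\mathbf B}\DIAG(\DIAG(\theta^n(s))_{f(s)})_{s=1}^S$ is Hurwitz, the homogeneous part of~\eqref{eq:dynamics_error_1} is exponentially stable, so the stationary covariance of $\mathbf e_t$ (and hence of $\boldsymbol\xi_t$) is finite and $\mathcal{O}(1/n^\ast)$. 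Integrating the quadratic form of the cost against this covariance gives a price of information that is also $\mathcal{O}(1/n^\ast)$, which vanishes. For strategy~$\hat{\mathbf g}^\infty$, the triangle-inequality step~\eqref{eq:proof_2_aux} transfers the bound, using uniform convergence of the stationary gains $\theta^n(s),\bar{\boldsymbol\theta}^n(s),\bar{\boldsymbol\rho}^n(s)$ to their infinite-population limits.

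The main obstacle is the uniform-in-time covariance bound, and it is what forces the extra Hurwitz requirement. The automatic verification under Assumption~\ref{ass:decoupled} is easy: when $\bar A^j(s)=\bar B^j(s)=0$, the matrix $\bar{\mathbf A}+\bar{\mathbf B}\DIAG(\DIAG(\theta^n(s))_{f(s)})_{s=1}^S$ becomes block diagonal with blocks $A(s)+B(s)\theta^n(s)$, and each such block is Hurwitz by the local stabilizability/detectability pair in Assumption~\ref{ass:infinite} applied to the algebraic version of the local Riccati equation in~\eqref{eq:Riccati}, so the additional condition holds without further assumption.
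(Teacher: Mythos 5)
Your proposal is correct and follows essentially the same route as the paper: the first part reduces to the algebraic Riccati fixed points guaranteed by Assumption~\ref{ass:infinite} together with the standard stabilizability/detectability argument, and the second part uses the extra Hurwitz condition to keep the error dynamics~\eqref{eq:dynamics_error_1}--\eqref{eq:dynamics_error_2} stable over the infinite horizon, with the decoupled case verified via the stabilizing property of the local gains $A(s)+B(s)\theta^n(s)$. Your write-up is in fact more detailed than the paper's (the stationary ansatz with an average-cost constant and the explicit $\mathcal{O}(1/n^\ast)$ steady-state covariance bound are spelled out rather than cited), but the underlying argument is the same.
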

\begin{proof}
 The proof of the first part  follows directly from the standard stability and detectability conditions in~\cite{bacsar2008h},
 %~\cite[Proposition 1]{Moon2017}
  where  the algebraic forms of the Riccati equations in~\eqref{eq:Riccati} admit positive bounded solutions.  The proof of the second part, however, follows from an additional  condition to guarantee that the relative errors in~\eqref{eq:error_1} and~\eqref{eq:error_2} will remain bounded as  $T \rightarrow \infty$.  If   $\bar{\mathbf  A} +\bar{\mathbf B} \DIAG(\DIAG(\theta^n(s))_{f(s)})_{s=1}^S$ is Hurwitz,  the dynamics of the errors in~\eqref{eq:dynamics_error_1} and~\eqref{eq:dynamics_error_2} become stable, implying that their limits  exist. Moreover, when the dynamics of agents  are decoupled (i.e., $\mathbf  A=\DIAG(\DIAG(A(s))_{f(s)})_{s=1}^S$ and $\mathbf B=\DIAG(\DIAG(B(s))_{f(s)})_{s=1}^S$),   $\bar{\mathbf  A} +\bar{\mathbf B} \DIAG(\DIAG(\theta^n(s))_{f(s)})_{s=1}^S$  becomes  Hurwitz due to the fact that $A(s)+B(s) \theta^n_t(s)$ is  Hurwitz  for any  $s \in \mathbb{N}_S$  (which is a stabilizing solution of  the Riccati equations~\eqref{eq:Riccati} under Assumption~\ref{ass:infinite}).
\end{proof}

\section{A Supply Chain Management Example }\label{sec:numerical}

Consider a supplier  that provides a particular  product to  its consumers  (e.g., the  bandwidth provided by a telecommunication company). The product is distributed to  the consumers through a number of  distributors (hubs), each of which has its own operating  capacity.  The objective is to find  a risk-sensitive solution for the supplier  and distributors such that  the delivered  product    is as close as possible  to the supplier's  production level while the  distributors' demands  are fulfilled.

To this end, let the supplier be the only agent in the first sub-population, i.e. $n(1)=1$. Denote by  $x_t^1 $ and  $u_t^1$,   respectively,  the  production level and control output of the supplier at time $t \in [0,T]$,  normalized with respect to the number of distributors. Let  $w_t^1$  be the uncertainty  of  the market   at time~$t$.  The  state evolution of the supplier in~\eqref{eq:dynamics1}  is described by: $A(1)=0.4$, $B(1)=0.8$ and $C(1)=0.6$.
The second sub-population is comprised of $n(2)$  distributors, where $x^i_t$, $u^i_t$ and $w^i_t$ denote the state,  action and  uncertainty of  distributor $i \in \mathbb{N}_{n(2)}$ at time $t \in [0,T]$, respectively.  The   dynamics of each distributor in~\eqref{eq:dynamics1} is  expressed by $A(2)=2$, $B(2)=1$ and $C(2)=1$.  Let $r^i$ denote  the desired  operating point of distributor $i$,  which is uniformly chosen from the interval  $[0,1]$,  $\forall i \in \mathbb{N}_{n(2)}$.  In addition, denote by $\alpha^i$  the influence factor  of distributor $i \in \mathbb{N}_{n(2)}$, indicating its  contribution in delivering the product such that
$
\bar x_t(2)=\frac{1}{n(2)}\sum_{i=1}^{n(2)}\alpha^i x^i_t,
$
where $n(2) \bar x_t(2)$ is the total distributed (delivered)  products to consumers.  We add  a penalty  function to  the supplier's cost function for the mismatch between the production level $n(2) x^1_t$ and distributed  products $n(2) \bar x_t(2)$ such that the cost function of the supplier is
$
 J_T^{1}:=   (x^1_T)^2 + 0.5n(2)(x^1_T-\bar x_T(2))^2  
+   \int_{0}^T \big( (x^1_t)^2 + 0.5 n(2)(x^1_t-\bar x_t(2))^2    + (u^1_t)^2 \big)dt
$.  The cost function of  distributor $i \in \mathbb{N}_{n(2)}$ is  
$
J^{i}_T=   (x^i_T-r^i)^2   +   \int_{0}^T ((x^i_t-r^i)^2+  0.1(u^i_t)^2)dt$.
 From Theorem~\ref{thm:mfs},  DSS strategy~\eqref{eq:DSS_strategy} minimizes the  cost function~\eqref{eq:total_cost_risk}, given risk factor $\lambda=1$,   $\mu(1)=\frac{n(1)}{n(1)+n(2)}$, $\mu(2)=\frac{n(2)}{n(1)+n(2)}$.

  Since   the complexity of the proposed strategy in~\eqref{eq:DSS_strategy} is independent of the number of distributors,  we choose a relatively small $n(2)$ in our  simulations  for ease  of display.  We use  $n(2)=20$, terminal time $T=10$, and   sampling time  $0.01$.     Figure~\ref{fig3}(a) demonstrates the  case where the distribution is primarily  performed  by one distributor.  In Figure~\ref{fig3}(b), on the other hand,  it is assumed that  two distributors  are more actively involved in the distribution function.  Figure~\ref{fig3}(c) demonstrates the case where half of the distributors are equally  more active  in the process, and Figure~\ref{fig3}(d) the case where the distribution is  carried out by all distributors homogeneously.

\begin{figure}[t!]
\centering
\hspace{0cm}
\includegraphics[trim={0 6cm 0 6cm},clip, width=\linewidth]{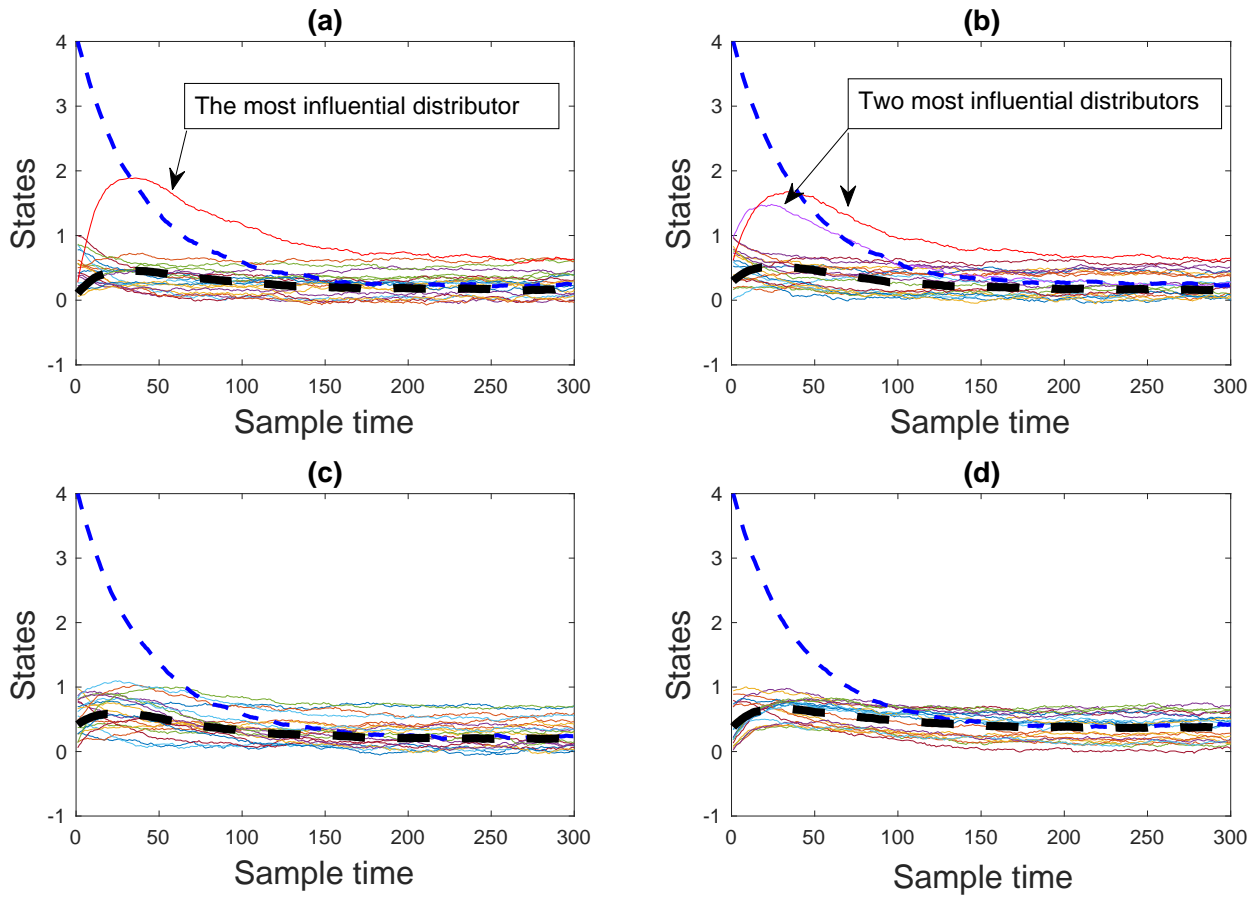}
\caption{The  trajectories of the states of  the supplier and distributors are depicted, where the dashed-blue  curve  is  the trajectory of the normalized  production level  of the supplier $x_t^1=\bar x_t(1)$,  the dashed-black  curve is  the normalized distributed product $\bar x_t(2)$,  and the remaining  curves  are the trajectories of  the distributors.  (a) The influence factor of one distributor is $4.45$ and all others  $0.1$;  (b)  influence factors of two distributors are  $3.14$ and all others are $0.1$;  (c)  half of the distributors  have the influence factor $0.1$ and the other half  $1.41$, and (d) all distributors have  the same influence factor.}\label{fig3} 
\end{figure}

\section{Conclusions}\label{sec:conclusion}

Inspired by deep learning  in big data analysis that  provides a data-driven model for  complex systems, we introduced  the notion of  deep structured  linear quadratic control problem in this paper.   It was shown how this notion can be used in the control of complex systems with a large number of decision makers, using  a low-dimensional deep Riccati equation. In particular, two non-classical information structures were studied, namely, deep-state sharing and partial deep-state sharing, where  the optimal solution for the former information structure and two sub-optimal solutions for the latter one  were obtained. The main results were also extended to infinite-horizon case.   In addition, the potential impact of the obtained results in enhancing our understanding of   deep  neural networks  was  demonstrated.  To illustrate the efficacy of the proposed results,  a  supply-chain  management  example   was provided. An interesting future work  is  to use reinforcement learning methods   to learn how to control such complex networked systems for the case when the model is not completely known.

%\printbibliography
\bibliographystyle{IEEEtran}
\bibliography{Jalal_Ref}
\appendices

\section{Proof of Proposition~\ref{prop:polynomial}}\label{sec:proof_prop:polynomial}
Since every  polynomial function of $F$ is invariant to right and left multiplication of matrix $F$,  it  results   that $F_x A_t=A_t F_x$ and $F_u B_t=B_t F_u$,  satisfying the equivariant dynamics condition. Similarly, $\TR(F_x^\intercal F_x Q_t x_tx_t^\intercal  )=\TR(x_t^\intercal F_x^\intercal Q_t F_x x_t)$ and  $\TR(F_x^\intercal F_x Q_t x_tx_t^\intercal  )=\TR(x_t^\intercal F_x^\intercal Q_t F_x x_t)$,  meeting the equivariant cost condition.

\section{Proof of Proposition~\ref{thm:normal}}\label{sec:proof_thm:normal}
From the spectral theorem, matrices $F$ and $F^\intercal$ can be expressed in terms of their eigenvalues and eigenvectors  such that $F=\sum_{j=1}^n \lambda_j v_j v_j^{\ast}$ and $F^\intercal =\sum_{j=1}^n  \lambda_j^\ast v_j v_j^{\ast}$, where $\langle v_j | v_j \rangle= v_j^\ast v_j=1$ and  $\langle v_j | v_{j'}\rangle=0$, $\forall j \neq j'$. Then, 
\begin{align}
\Norm{F x_t}{Q_t}&=\TR( x_t^\intercal F^\intercal  Q_t F x_t )=\TR( Q_t F x_t   x_t^\intercal F^\intercal )\\
&=\TR( Q_t (\sum_{j=1}^n  \lambda_j v_j v_j^{\ast} x_t)  (\sum_{j=1}^n  \lambda^\ast_j x_t^\intercal v_j v_j^{\ast}))\\
&=
 \TR( Q_t (\sum_{j=1}^n  \lambda_j v_j v_j^{\ast} x_t  v_j^\ast v_j)  (\sum_{j=1}^n  \lambda^\ast_j    x_t^\intercal v_j v_j^{\ast}))\\
&=\TR( Q_t (\sum_{j=1}^n  \langle \lambda_j| \lambda_j \rangle  \Proj(x_t,v_j) \Proj(x_t,v_j)^\ast))\\
&=\sum_{j=1}^n \langle \lambda_j| \lambda_j \rangle \Norm{\Proj(x_t,v_j)}{Q_t}.
\end{align}
Similar relationship holds for  the actions, i.e., $\Norm{F u_t}{R_t}=\sum_{j=1}^n \langle \lambda_j| \lambda_j \rangle (\Norm{\Proj(x_t,v_j)}{R_t}$.

\section{Proof of Proposition~\ref{proposition:symmetric}}\label{sec:proof_proposition:symmetric}
Since $F$ is a symmetric real-valued matrix, there exists a spectral decomposition $F=Z \Lambda Z^\intercal$, where  the orthogonal matrix $Z$ and diagonal matrix $\Lambda$ contain the eigenvectors and eigenvalues of $F$, respectively.
From Proposition~\ref{prop:polynomial},  one set of solutions  is polynomial functions of $F$, where  for any set of coefficients $ a_t(h) \in \mathbb{R}$, $ h \in \mathbb{N}_H \cup \{0\}$, $H \in \mathbb{N}$:
%\begin{align}
$A_t=\sum_{h=0}^{H}  a_t(h) F^h=\sum_{h=0}^{H} a_t(h) Z \Lambda^h Z^\intercal=\sum_{h=0}^{H} a_t(h) (\sum_{j=1}^n \lambda^h_j v_j v_j^\intercal)=a_t \mathbf I_{n \times n} +\sum_{j=1}^n \bar a^j_t v_j v_j^\intercal$,
%\end{align}
where  $a_t:=a_t(0)$ and  $\bar a^j_t:=\sum_{h=1}^{H} \lambda_j^h a_t(h) $, $j \in \mathbb{N}_n$.   A similar argument holds for $B_t$, $Q_t$ and $R_t$.  The proof is completed on noting  that $v_j^\intercal x_t=\sqrt{n}\bar x_t^j$ and   $v_j^\intercal u_t=\sqrt{n} \bar u_t^j$.

\section{Proof of Proposition~\ref{proposition:permutation}}\label{sec:proof_proposition:permutation}
 The conditions of equivariant dynamics and cost function in Definition~\ref{Def:Invariant}  reduce to those of  exchangeable dynamics and cost in \cite[Definition 2.1]{arabneydi2016new} for  every permutation  matrix.   It is shown in~\cite[Proposition 2.1]{arabneydi2016new} that any exchangeable LQ model can be expressed as an  LQ model wherein  the agents are coupled through the mean (unweighed average) of the states and actions. To find a set of polynomial families that are equivariant to any  permutation matrix $F$, one must have $A_t F=F A_t$, $B_t F=F B_t$, $Q_t=F^\intercal Q_t F$ and $R_t=F^\intercal R_t F$. Therefore,  matrices $A_t$, $B_t$, $Q_t$ and $R_t$ must be in the following  form:  $c_t \mathbf{I}_{n \times n} + \bar c_t  \mathbf{1}_{n \times n}$, $c_t, \bar c_t \in \mathbb{R}$. Note  that  $F^\intercal F=\mathbf{I}_{n \times n}$ and  $\langle \lambda_j|\lambda_j \rangle=1$, $j \in \mathbb{N}_n$  for every permutation matrix $F$, implying that all features are equally important. 
% In particular,  according to  Proposition~\ref{thm:normal}, the following equality holds  for any two arbitrary permutation matrices $P$ and $P'$:
% $
%\sum_{m=1}^n \Norm{\Proj(x_t,v_m)}{Q_t}=\sum_{m=1}^n \Norm{\Proj(x_t,v'_m)}{Q_t}.
%$ 
%By writing the above vectors, i.e.  $\{x_t, (v_m,v'_m)_{m=1}^n\}$,   in the standard basis,  one can show  that the structure of  matrix $Q_t$  has to be in the form of $q_t \mathbf{I}_{n \times n} + n\bar q_t \mathbf{1}_{n\times n}$.
\end{document}